\documentclass[11pt]{article}
\usepackage{a4}
\usepackage{amsfonts}
\usepackage{amssymb}
\usepackage{amsmath}
\usepackage{amsthm}
\usepackage{latexsym}
\usepackage{fancybox}
\usepackage{layout}
\usepackage[english]{babel}
\usepackage{epsfig}

\usepackage[usenames]{color}
\def\blue{\color{blue}}
%
\addtolength{\textwidth}{4cm}
\addtolength{\oddsidemargin}{-2cm}
\addtolength{\evensidemargin}{-2cm}
\addtolength{\topmargin}{-.75cm}

\numberwithin{equation}{section} 
\newtheorem{theorem}{Theorem}[section]
\newtheorem{definition}[theorem]{Definition}
\newtheorem{proposition}[theorem]{Proposition}

\newtheorem{lemma}[theorem]{Lemma}
\newtheorem{remark}[theorem]{Remark}


\def\1{\raisebox{2pt}{\rm{$\chi$}}}

\renewcommand{\H}       {\mathcal{H}}

\newcommand{\nada}[1]   {}

\newcommand{\R}         {\ensuremath{\mathbb R}}

\def\R{I\!\!R}

\def\1{\raisebox{2pt}{\rm{$\chi$}}}

\def\a{{\bf a}}
\def\b{{\bf b}}
\def\z{{\bf z}}

\def\res{\mathbin{\vrule height9pt width.1pt\vrule height.1pt width9pt}}

\renewcommand{\L}{\mathcal{L}}

\begin{document}
\title{On the relativistic heat equation in one space dimension}

\author{{\bf J.A. Carrillo}
        \thanks{Department of Mathematics, Imperial College London, London SW7 2AZ, UK. Email:
                \texttt{carrillo@imperial.ac.uk}}\,,
        {\bf V. Caselles}
        \thanks{Departament de Tecnologia, Universitat Pompeu-Fabra, Barcelona,
        Spain. E-mail: \texttt{vicent.caselles@upf.edu}}\,,
        {\bf S. Moll}
        \thanks{Departament d'An\`alisi Matme\`atica, Universitat de Val\`encia, Valencia, Spain.
        E-mail: \texttt{j.salvador.moll@uv.es}}}

\date{}
\maketitle

\begin{abstract}
We study the relativistic heat equation in one space dimension. We
prove a local regularity result when the initial datum is locally
Lipschitz in its support. We propose a numerical scheme that
captures the known features of the solutions and allows for
analysing further properties of their qualitative behavior.
\end{abstract}

\noindent {\it Key words: entropy solutions, flux limited
diffusion equations, pseudo-inverse distribution}

\bigskip
\noindent {\it AMS (MOS) subject classification: 35K55 (35K20
35K65)}


\section{Introduction}\label{sect:intro}

In this work, we explore both analytically and numerically
the implications of a new strategy to study flux-dominated
nonlinear diffusions in one dimension. To be more precise, we consider
the so-called relativistic heat equation (RHE)
\begin{equation}\label{RHEg}
u_t = \nu \, \left( \frac{u u_x}{\sqrt{u^2 +
\frac{\nu^2}{c^2}(u_x)^2}} \right)_x, \qquad x\in\R, \,t > 0.
\end{equation}
introduced by Rosenau in \cite{Rosenau2} and, later on,  by
Brenier in \cite{Brennier1} based on optimal transportation ideas.
The name of RHE comes from the fact that \eqref{RHEg} converges as
$c\to \infty$ to the heat equation both formally and rigorously
\cite{Case}, while the flux in \eqref{RHEg}, understood as a
conservation law, is bounded by the speed of light $c$ whenever
the solution is positive.

Many other models of nonlinear degenerate parabolic equations with
flux saturation as the gradient becomes unbounded have been
proposed by Rosenau and his coworkers \cite{CKR,Rosenau2}, and
Bertsch and Dal Passo \cite{BDP1,DP}. Notice also \cite{MM} for
the presence of flux limited diffusion equations in the context of
radiation hydrodynamics.

The general class of flux limited diffusion equations and the
properties of the relativistic heat equation have been studied in
a series of papers \cite{ACMRelat,ACMRelatE,ACMRelatQ,CER2}. An
existence and uniqueness theory of entropy solutions for the
Cauchy problem associated to the quasi-linear parabolic equation
\begin{equation}
\label{NCproblem} \displaystyle \frac{\partial u}{\partial t} =
{\rm div} \ \b (u, Du),
\end{equation}
was developed in \cite{ACMRelat,ACMRelatE}. Here, the flux
function is given by $\b(z, \xi) = \nabla_{\xi} f(z, \xi)$ and
$f:\R\times\R^N\to\R^+$ is a convex function with linear growth as
$\Vert \xi \Vert \to \infty$, such that $\nabla_{\xi} f(z, \xi)
\in C(\R\times\R^N)$ satisfying other additional technical
assumptions. In particular, the relativistic heat equation
(\ref{RHEg}) satisfies these assumptions, and other models
considered in \cite{Rosenau2}. To avoid the difficulty of the lack
of a-priori estimates that ensure the compactness in time of
solutions of (\ref{NCproblem}), the existence problem was
approached using Crandall-Liggett's theorem
\cite{CrandallLiggett}. For that, we first considered the
associated elliptic problem and we defined a notion of entropy
solution for which we developed a well-posedness theory. The
notion of entropy solution permits to prove a uniqueness result
using Kruzhkov's doubling variables technique
\cite{Kruzhkov,Carrillo1}. This technique was suitably adapted to
work with functions whose truncatures are of bounded variation
\cite{ACMRelat,ACMRelatE}, which is the natural functional setting
for (\ref{NCproblem}) and its associated elliptic equation.

The evolution of the support of solutions of the relativistic heat
equation \eqref{RHEg} was studied in \cite{ACMRelatQ}. By
constructing sub- and super-solutions which are fronts evolving at
speed $c$ and using a comparison principle between entropy
solutions and sub- and super-solutions, it was proved in
\cite{ACMRelatQ} that the support of solutions evolves at speed
$c$. Moreover, the existence of solutions which have discontinuity
fronts moving at the speed $c$ was again shown using the
comparison principle with sub-solutions. This implies, in
particular, that the maximal regularity in time that one can
expect for general solutions of (\ref{RHEg}) is that $u \in
BV([\tau,T]\times \R^N)$ for any $0 < \tau < T$. That this happens
for a general class of initial conditions was proved in
\cite{ACMregularity} and later extended in \cite{CER2}. This lack
of regularity is at the origin of the notion of entropy solutions
for this type of equations. The only regularity result for smooth
initial conditions was proved in \cite{Case} and it guarantees
that $\nabla \ln u$ is bounded whenever initially is. But the
study of the local regularity of solutions of (\ref{RHEg}) is
still an open question. One of the purposes of this paper is to
address this problem for the Cauchy problem associated to
(\ref{RHEg}) in one space dimension with {\it compactly supported
bounded probability densities} as initial data.

Assuming that the initial data in non-negative, we can easily
change variables to observe that $\tilde u(t,x)$ is a solution of
(\ref{RHEg}) if and only if $u(t,x) = \tilde u(\frac{\nu}{c^2}
t,\frac{\nu}{c} x)$ is a solution of
\begin{equation}\label{RHE}
u_t = \, \left( \frac{ u u_x}{\sqrt{u^2 + (u_x)^2}} \right)_x.
\end{equation}
Thus, without loss of generality we may assume that $\nu = c=1$,
and for simplicity we shall assume it in the rest unless
explicitly stated. Notice also that if $u(t)$ is a solution
corresponding to $u_0$, then $\lambda u(t)$ is a solution
corresponding to $\lambda u_0$, $\lambda > 0$. Thus, without loss
of generality we assume that $\Vert u_0\Vert_1 = \Vert u(t)\Vert_1
= 1$ for any $t>0$, and reduce our evolution to probability
densities. In this paragraph, the term solution refers to entropy
solution for which the well-posedness theory was developed and for
which a summary of its concept is reminded to the reader in the
Appendix.

The local regularity of entropy solutions to \eqref{RHE} will be
done by a change variables, writing (\ref{RHE}) in terms of its
inverse distribution function. This change of variables has its
origin in using mass transport techniques to study diffusion
equations \cite{CarrilloToscani04,BCC}. It is known
\cite{Brennier1} that equation \eqref{RHEg} has the structure of a
gradient flow of a certain functional (the physical entropy) with
respect to some transport distance. This structure was already
used to give well-posedness results to \eqref{RHEg} in \cite{MP}.
Nonlinear diffusions have received lots of attention from optimal
transport theory viewpoint starting from the seminal works
\cite{JKO,Otto01}.

Transport distances between probability measures in one dimension
are much easier to compute since they can be written in terms of
distribution functions and their generalized inverses
(pseudo-inverse), the so-called Hoeffding-Fr\'echet Lemma
\cite[Section 2.2]{Villani}. This result led to the following
change of variables based on the distribution function $F$
associated to the probability measure $u$, defined as
$$
F(t,x)=\int_{-\infty}^x u(t,y)\,dy\,.
$$
We formally consider its inverse $\varphi$ defined on the mass
variable $\eta\in (0,1)$ that verifies
$$
F(t,\varphi(t,\eta)) =\eta, \qquad   \eta\in (0,1).
$$
After straightforward computations assuming that all involved
functions are well-defined and smooth, one obtains the equation
\begin{equation}\label{RHEd}
\varphi_t =   \frac{\varphi_{\eta\eta}}{\sqrt{(\varphi_\eta)^4 +
(\varphi_{\eta\eta})^2}}
\end{equation}
for the inverse distribution function $\varphi$. This change of
variables has first been used for nonlinear diffusions in
\cite{CarrilloToscani04} to show contractivity properties of
transport distances for porous-medium like equations. It is worthy
to remark that an implicit Euler discretization of \eqref{RHEd} is
equivalent to the variational JKO scheme whose convergence is
proved in \cite{MP} for \eqref{RHEg} under certain assumptions.
Numerical schemes to solve the equation for the pseudo-inverse
function in the case of the porous medium equation were analysed
in \cite{GT1}. This Lagrangian approach was generalized to several
dimensions in \cite{CarrilloMoll} in order to propose numerical
schemes for equations with gradient flow structure in optimal
transport theory and general quasilinear problems in divergence
form.

In Section \ref{sectheo}, we will first take advantage of this change of
variables to prove the following regularity result:

\begin{theorem}\label{thm:reg1D}
Let $u_0\in  L^\infty(\R)$ with $u_0(x) \geq \kappa > 0$ for $x\in
[a,b]$, and $u_0(x)=0$ for $x\not\in [a,b]$. Assume that $u_0\in
W^{1,\infty}([a,b])$. Let $u(t,x)$ be the entropy solution of
\eqref{RHEg} with $u(0)=u_0$, $\Vert u_0\Vert_1 = 1$ . Then $u\in
C([0,T],L^1(\R^N))$ satisfies:
\begin{itemize}
\item[(i)] $u(t,x)\geq \kappa(t) > 0$ for any $x\in (a-ct,b+ct)$
and any $t > 0$, $u(t,x) = 0$, $x\not \in [a-ct,b+ct]$, $t\in
(0,T)$,

\item[(ii)] $u(t)\in BV(\R)$, $u(t)\in W^{1,1}(a-ct,b+ct)$ for
almost any $t\in (0,T)$, and $u(t)$ is smooth inside its support,

\item[(iii)] if  $u_0\in W^{2,1}(a,b)$, then $u_t$ is a Radon
measure in $(0,T)\times \R$.
\end{itemize}
\end{theorem}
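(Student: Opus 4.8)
The plan is to work with the transformed equation \eqref{RHEd} for the inverse distribution function $\varphi(t,\eta)$ and transfer regularity back to $u$. The first step is to reduce, by the scaling remarked in the text, to the case $\nu=c=1$ and $\Vert u_0\Vert_1=1$, so that $u_0$ is a probability density supported on $[a,b]$ with $u_0\geq\kappa$ there and $u_0\in W^{1,\infty}$. I would then set up the change of variables rigorously: $F(t,x)=\int_{-\infty}^x u(t,y)\,dy$ and $\varphi(t,\cdot)=F(t,\cdot)^{-1}:(0,1)\to[a-ct,b+ct]$. Under the hypotheses, $\varphi_0$ is Lipschitz with $\varphi_{0,\eta}\geq\kappa^{-1}$ bounded below, hence the spatial operator on the right of \eqref{RHEd} is uniformly parabolic near $t=0$ (the denominator $\sqrt{\varphi_\eta^4+\varphi_{\eta\eta}^2}$ is bounded below by $\varphi_\eta^2\geq\kappa^{-2}>0$). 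The key point is that equation \eqref{RHEd}, although fully nonlinear in $\varphi_{\eta\eta}$, is of the form $\varphi_t=a(\varphi_\eta,\varphi_{\eta\eta})$ with $\partial a/\partial\varphi_{\eta\eta}=\varphi_\eta^4/(\varphi_\eta^4+\varphi_{\eta\eta}^2)^{3/2}>0$, so as long as one controls $\varphi_\eta$ from above and below one has a uniformly parabolic quasilinear equation to which classical Schauder/bootstrap theory applies, giving smoothness of $\varphi$ in the interior of $(0,T)\times(0,1)$, which is precisely the interior of the support of $u$ after inverting the change of variables.

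The heart of the argument is therefore the a priori control of $\varphi_\eta$ (equivalently $1/u$ along characteristics). For the lower bound on $u$, claim (i), I would invoke the comparison principle for entropy solutions with the front sub-solutions constructed in \cite{ACMRelatQ}, which already yields that $\text{supp}\,u(t)=[a-ct,b+ct]$, together with a quantitative lower bound $\kappa(t)>0$ obtained by comparison with a suitable explicitly-decaying sub-solution (or by a direct estimate on $\varphi_\eta$ from the maximum principle applied to the equation satisfied by $w=\varphi_\eta$, which is again a quasilinear parabolic equation in divergence-like form). For the upper bound on $\varphi_\eta$ — equivalently $u$ staying bounded, giving $u(t)\in L^\infty$ — one uses $\Vert u(t)\Vert_\infty\leq\Vert u_0\Vert_\infty$, a standard contraction/comparison estimate for \eqref{NCproblem}; translated to $\varphi$ this is $\varphi_\eta\geq\Vert u_0\Vert_\infty^{-1}$, i.e. a bound from below, and similarly the finite speed of support plus conservation of mass bounds $\varphi_\eta$ from above in $L^1$. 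Combining these, on any region $[\tau,T]\times[\delta,1-\delta]$ staying away from $\eta=0,1$, $\varphi_\eta$ is pinched between two positive constants and the parabolicity is genuine; Schauder estimates then bootstrap $\varphi\in C^\infty$ there. Undoing the change of variables gives $u$ smooth inside its support, and the chain $u=1/\varphi_\eta\circ F$ together with $\varphi_\eta\in L^1$, $\varphi_{\eta\eta}$ locally bounded gives $u(t)\in BV(\R)$ and $u(t)\in W^{1,1}(a-ct,b+ct)$ for a.e. $t$, which is claim (ii).

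For claim (iii), assuming additionally $u_0\in W^{2,1}(a,b)$, I would show $\varphi_{0}\in W^{2,1}$ is preserved enough that $\varphi_t$, given pointwise by the right-hand side of \eqref{RHEd}, is an $L^1$-in-space function for a.e. $t$ with the requisite uniformity, hence a Radon measure on $(0,T)\times\R$ after transferring back through $u_t=-(\varphi_t/\varphi_\eta)\circ F\cdot u$ plus boundary-front contributions at $x=a-ct,b+ct$; the moving discontinuity fronts from \cite{ACMRelatQ} contribute a singular (measure) part, which is exactly why one only gets a measure and not an $L^1$ function. The main obstacle I anticipate is making the change of variables rigorous at the level of entropy solutions — entropy solutions of \eqref{RHEg} are only $BV$ in time with possible jumps, so $F$ and $\varphi$ are not a priori smooth, and one must justify that $\varphi$ solves \eqref{RHEd} in an appropriate weak sense before the parabolic regularity machinery can be applied. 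I would handle this by approximation: regularize $u_0$ (e.g. mollify and keep the lower bound $\kappa$), solve the approximate problems where everything is classical by the above estimates, pass the a priori bounds (which are uniform in the approximation because they rest only on $\Vert u_0\Vert_\infty$, $\kappa$, $\Vert u_0\Vert_{W^{1,\infty}}$ and the speed-$c$ support propagation) to the limit, and identify the limit with the unique entropy solution via the well-posedness theory recalled in the Appendix.
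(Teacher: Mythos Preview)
Your outline has the right skeleton (change of variables, interior parabolic regularity for the transformed equation, transfer back, identify with the entropy solution), but there is a genuine gap at the core step. You claim that once $\varphi_\eta$ is pinched between two positive constants the equation \eqref{RHEd} is uniformly parabolic and Schauder/bootstrap applies. That is not correct: the parabolicity constant is
\[
\frac{\partial}{\partial \varphi_{\eta\eta}}\left(\frac{\varphi_{\eta\eta}}{\sqrt{\varphi_\eta^4+\varphi_{\eta\eta}^2}}\right)
=\frac{\varphi_\eta^4}{(\varphi_\eta^4+\varphi_{\eta\eta}^2)^{3/2}},
\]
which tends to zero as $|\varphi_{\eta\eta}|\to\infty$ even with $\varphi_\eta$ bounded above and below. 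So a two-sided bound on $\varphi_\eta$ alone does \emph{not} give uniform parabolicity; you also need an a priori bound on $\varphi_{\eta\eta}$. This is precisely the missing ingredient, and obtaining it is the main technical content of the paper's proof. The paper does not work with $\varphi$ but with $v=\varphi_\eta$, which satisfies the divergence-form equation \eqref{RHEdv} on $(0,1)$ with the vertical-contact-angle boundary condition \eqref{verticalBC}. It regularizes the \emph{equation} (not the data) by adding $\epsilon v_{xx}$ and perturbing the boundary condition, then proves a local Lipschitz bound on $v_\epsilon$ uniform in $\epsilon$ by a Bernstein-type argument (Step~5: differentiate the equation, multiply by $v_x\phi^2$ with a cutoff $\phi$, and control each term to derive a parabolic differential inequality for $w=|v_x|^2\phi^2$). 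Only after this bound on $v_x$ (i.e.\ on $\varphi_{\eta\eta}$) is in hand does the equation become uniformly parabolic on compact subsets and the interior bootstrap (Step~6) go through. Your strategy of mollifying $u_0$ does not address this, because the degeneracy is in the operator, not in the data.

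Two smaller points. First, the boundary behaviour at $\eta=0,1$ is not incidental: the paper has to construct an explicit barrier (Step~2) to get the uniform $L^\infty$ bound on $v$ and to handle the limit of the boundary condition as $\epsilon\to 0$ in a suitable weak sense (Step~7); your sketch does not mention any boundary condition for $\varphi$ or $v$. Second, the identification of the constructed $u$ with the entropy solution (your last paragraph) is not a soft uniqueness appeal; since one does not know a priori that $u_t$ is a measure, the paper verifies the entropy inequalities directly via a discrete-in-time argument (Proposition~\ref{prop:suficient}). For (iii), the paper does not argue through $\varphi$ at all but simply quotes the Radon-measure-in-time results already available for this class of data.
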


We emphasize that the new parts of this result with respect to the
literature discussed above refer to the regularity stated on
points (ii) and (iii). This result implies that sharp corners on
the support of the initial data are immediately smoothed out by
the evolution of the RHE. This result will be extended in Section
\ref{sect:Extensions}, in particular, we cover the case where the
initial condition $u_0$ vanishes at the boundary of its support.

In Section \ref{sect:numerics}, we will propose an adaption of the numerical scheme
in \cite{CarrilloMoll} based on equation \eqref{RHEd} with
suitable boundary conditions that fully captures the demonstrated
behavior of the solutions of the RHE. Moreover, we will show
different numerical tests in situations where the theory has not
been developed yet. For instance, we numerically study the
conditions for the formation or not of discontinuities on the bulk
of the solutions for RHE and its porous medium counterparts
\begin{equation*}
u_ t = \, \left( \frac{ u^m u_x}{\sqrt{u^2 + (u_x)^2}} \right)_x
\end{equation*}
with $m>1$ and their long-time asymptotic behaviour. Finally, we
include in Appendix A some basic material to describe the notion
of entropy solutions for (\ref{RHE}) for the sake of completeness.


\section{Regularity of Solutions}\label{sectheo}

As proved in \cite{ACMRelat}, there exists a unique entropy
solution of the Cauchy problem for \eqref{RHE} for any $u_0\in
L^1(\R)\cap L^\infty(\R)$, $u_0\geq 0$, see Appendix for the full
notion of solution. Moreover if $u_0$ has compact support in $\R$
and is locally bounded away from zero in any interior point of its
support, then $\mathrm{supp}(u(t)) = \mathrm{supp}(u_0) \oplus
B(0,t)$ \cite{ACMRelatQ}. The rest of this Section is devoted to
the proof of the regularity statements (ii) and (iii).

Let us recall that the entropy condition on the jump set of $u$
can be expressed by saying that the profile of $u$ is vertical at
those points. Since the support of $u(t)$ is $(a-t,b+t)$, and
$u(t)\geq \kappa(t) > 0$ in $(a-t,b+t)$ for any $t>0$ \cite{ACMRelatQ}, there is
a jump at the points $x=a-t,b+t$ and we have \cite{CER2}
\begin{equation}\label{BCmm1}
\frac{u_x}{\sqrt{u^2+(u_x)^2}} (t,a-t)=1, \qquad
\frac{u_x}{\sqrt{u^2+(u_x)^2}} (t,b+t)=-1.
\end{equation}
Let us consider the change of variables discussed in the
introduction and define the function $\varphi(t,\eta)$ by the
relation
\begin{equation}\label{CVinici1}
\int_{a-t}^{\varphi(t,\eta)} u(t,x) dx = \eta, \qquad   \eta\in (0,1).
\end{equation}
Proceeding formally, assuming that the function is smooth inside
its support and differentiating with respect to $\eta$ we obtain
\begin{equation*}
u(t,x) \varphi_\eta = 1, \qquad \hbox{\rm for $x=\varphi(t,\eta)$.}
\end{equation*}
Differentiating with respect to $t$ we have
$$
u(t,x)\varphi_t + u(t,a-t) + \int_{a-t}^{\varphi(t,\eta)} u_t(t,x) dx = 0.
$$
Taking into account the boundary conditions (\ref{BCmm1}) \cite{CER2}, one has
\begin{eqnarray*}
\int_{a-t}^{\varphi(t,\eta)} u_t(t,r) dr & = & \int_{a-t}^{\varphi(t,\eta)} \, \left( \frac{ u u_x}{\sqrt{u^2 +
(u_x)^2}} \right)_x dx =  \frac{ u u_x}{\sqrt{u^2 + (u_x)^2}}(t,x) - u(t,a-t),
\end{eqnarray*}
hence
$$
u(t,x)\varphi_t = - \frac{ u u_x}{\sqrt{u^2 + (u_x)^2}}(t,x)
\qquad \hbox{\rm for $x=\varphi(t,\eta)$\,.}
$$
Then the equation satisfied by $\varphi$ is
\begin{equation*}
\varphi_t =   \frac{\varphi_{\eta\eta}}{\sqrt{(\varphi_\eta)^4 +
(\varphi_{\eta\eta})^2}}\,.
\end{equation*}


\subsection{Regularity result in mass variables}

Now, let us consider the change of variables $v=\varphi_\eta$. The
equation satisfied by $v$ is
\begin{equation}\label{RHEdv}
v_t =  \, \left( \frac{ v_x}{\sqrt{v^4 +
(v_x)^2}} \right)_x  \qquad  t > 0, \, \, x\in (0,1).
\end{equation}
where we have written $x$ instead of $\eta$. This will done
through this subsection for convenience.

The initial condition $v_0$ is determined from the initial
condition $u_0$. We assume that $u_0 \in L^\infty(\R)$, $u_0 \geq
\kappa$, and $u_0\in W^{1,\infty}([a,b])$. Since  the relation
between $u_0$ and $v$ is determined by
$v_0(\eta)=\frac{1}{u_0(x)}$, then $\alpha_1:=\frac{1}{\Vert
u_0\Vert_\infty} \leq v_0\leq\frac{1}{\kappa}:=\alpha_2$. We also
have $v_0\in W^{1,\infty}(0,1)$. Note that
\begin{equation*}
\int_0^1 v_0(\eta)\, d\eta = \int_a^b \, dx = b-a.
\end{equation*}
If we denote by $\nu$ the outer unit normal to $(0,1)$, that is $\nu(0) = -1$ and $\nu(1)=1$,
the natural boundary conditions for (\ref{RHEdv}) are
\begin{equation}\label{verticalBC}
\frac{ v_x}{\sqrt{v^4 + (v_x)^2}} \nu = 1 \qquad \mbox{at }
x\in\partial(0,1)\,,
\end{equation}
with $\partial(0,1)=\{0,1\}$. The first step toward Theorem
\ref{thm:reg1D} is to show a regularity result for the Cauchy
problem \eqref{RHEdv}-\eqref{verticalBC}.

\begin{theorem}\label{thm:regv1D}
Assume that $v_0\in W^{1,\infty}(0,1)$, $v_0\geq \alpha_1 > 0$.
Then there exists a smooth solution of \eqref{RHEdv} in
$(0,T)\times (0,1)$ with $v(0,x)= v_0(x)$ and satisfying the
boundary conditions \eqref{verticalBC} (in a weak sense).
\end{theorem}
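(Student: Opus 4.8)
The plan is to obtain $v$ as a limit of classical solutions of a family of uniformly parabolic regularizations of \eqref{RHEdv}--\eqref{verticalBC}, controlled by a priori estimates that do not degenerate as the regularization parameter tends to $0$. Concretely, for $\epsilon\in(0,1)$ one may replace the saturated flux $a(v,p)=p/\sqrt{v^4+p^2}$ by, for instance, $a_\epsilon(v,p)=p/\sqrt{v^4+p^2}+\epsilon p$, so that $\epsilon\le\partial_p a_\epsilon(v,p)\le\alpha_1^{-2}+\epsilon$ once one knows $v\ge\alpha_1$, and solve
\[
v_t=\big(a_\epsilon(v,v_x)\big)_x\ \text{ in }(0,T)\times(0,1),\qquad a_\epsilon(v,v_x)\,\nu=1\ \text{ on }\partial(0,1),\qquad v(0,\cdot)=v_0^\epsilon,
\]
with $v_0^\epsilon\in C^\infty([0,1])$, $v_0^\epsilon\ge\alpha_1$, $v_0^\epsilon\to v_0$ in $W^{1,q}(0,1)$ for every $q<\infty$, and satisfying the parabolic compatibility conditions at $\{0,1\}$ (possible because, for $\epsilon>0$, $p\mapsto a_\epsilon(v,p)$ is an increasing bijection of $\R$, so the boundary value $\pm1$ is attained at finite slope). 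For this smooth, uniformly parabolic quasilinear problem the Ladyzhenskaya--Solonnikov--Ural'tseva theory (or Amann's) yields a unique classical solution $v^\epsilon$, extended to $[0,T]$ once $C^0$ bounds are in hand.

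The heart of the argument is a collection of a priori estimates uniform in $\epsilon$. First, the constant $\alpha_1$ is a subsolution (its flux is $0\le1$), so by comparison $v^\epsilon\ge\alpha_1$ on $[0,T]$, which rules out the degeneracy of $\sqrt{v^4+p^2}$ at $v=0$; testing the equation against $1$ and using the boundary condition gives the exact mass identity $\int_0^1 v^\epsilon(t,x)\,dx=(b-a)+2t$. Next one proves an \emph{interior gradient bound}: differentiating the equation, $w=v^\epsilon_x$ solves a linear parabolic equation whose coefficients are controlled only through $\alpha_1$ (indeed $|\partial_v a_\epsilon(v,p)|\le\tfrac{1}{\sqrt2}|p|^{-1/2}$, with no dependence on an upper bound for $v$), so a Bernstein-type argument with an interior cutoff, using the bound on $(v_0^\epsilon)_x$, yields $\sup_{[\delta,1-\delta]}|v^\epsilon_x|\le M(\delta,T)$ independently of $\epsilon$. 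Together with the mass identity this gives $\alpha_1\le v^\epsilon\le C(T)$. The decisive point is that \emph{on each compact subset of $(0,T)\times(0,1)$ the original, unregularized equation is already uniformly parabolic}: since $\alpha_1\le v^\epsilon\le C(T)$ and $|v^\epsilon_x|\le M$ there, $\partial_p a(v^\epsilon,v^\epsilon_x)=(v^\epsilon)^4\big((v^\epsilon)^4+(v^\epsilon_x)^2\big)^{-3/2}\ge c(\delta,T)>0$ with no $\epsilon$ involved; De Giorgi--Nash--Moser followed by Schauder estimates then produce, for every $k$, $\epsilon$-uniform bounds for $v^\epsilon$ in $C^{k,\gamma}$ on compact subsets of $(0,T)\times(0,1)$.

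Passing to the limit is then routine: by Arzel\`a--Ascoli and a diagonal extraction $v^\epsilon\to v$ in $C^k_{\rm loc}((0,T)\times(0,1))$ for all $k$, and since $\epsilon v^\epsilon_{xx}\to0$ locally uniformly, $v$ is a classical solution of \eqref{RHEdv} in $(0,T)\times(0,1)$ with $\alpha_1\le v\le C(T)$; that $v(0,\cdot)=v_0$ follows from $v_0^\epsilon\to v_0$ plus equicontinuity down to $t=0$ coming from the gradient bound. For the boundary condition \eqref{verticalBC} in the weak sense, one writes the natural weak formulation of \eqref{RHEdv}--\eqref{verticalBC}, in which the flux condition $a(v,v_x)\nu=1$ appears as explicit boundary terms (e.g. $\int_0^T(\xi(t,1)+\xi(t,0))\,dt$ after integrating by parts against a test function $\xi$ not vanishing on $\partial(0,1)$), and passes to the limit in the corresponding identity for $v^\epsilon$: the extra term $\epsilon\iint v^\epsilon_x\xi_x$ is $O(\sqrt\epsilon)$ by the energy estimate $\epsilon\iint|v^\epsilon_x|^2\le C$, while $a(v^\epsilon,v^\epsilon_x)$, bounded by $1$ and convergent locally, produces the same boundary terms in the limit; equivalently $v_x/\sqrt{v^4+v_x^2}\to\pm1$ as $x\to0^+,1^-$, in agreement with the entropy boundary condition \eqref{BCmm1} once $v$ is identified with $\varphi_\eta$.

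I expect the main obstacle to be the $\epsilon$-uniform interior gradient estimate on $v^\epsilon$: it is precisely what turns the unregularized equation into a uniformly parabolic one away from the boundary and thereby unlocks all higher-order estimates, so the whole scheme rests on it (and it is exactly here that the structural bound $|\partial_v a_\epsilon(v,p)|\le\tfrac1{\sqrt2}|p|^{-1/2}$, the lower bound $v\ge\alpha_1$, and $v_0\in W^{1,\infty}$ are used). A secondary difficulty is that the boundary is genuinely singular — the solution has vertical tangents at $x=0,1$, so no $C^1$ regularity up to $\partial(0,1)$ is to be expected — and some care is needed to show that the flux condition $a(v,v_x)\nu=1$ survives in the limit, which is where $|a(v,v_x)|\le1$ and the $O(\sqrt\epsilon)$ control of the regularizing term do the work.
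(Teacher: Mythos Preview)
Your overall scheme---vanishing viscosity $a_\epsilon(v,p)=a(v,p)+\epsilon p$, a priori estimates independent of $\epsilon$, interior Schauder, then pass to the limit---is exactly the paper's. The gap is in the \emph{order} of your a priori estimates. You claim that the Bernstein argument with an interior cutoff can be run knowing only $v^\epsilon\ge\alpha_1$, because $|\partial_v a_\epsilon(v,p)|\le 2^{-1/2}|p|^{-1/2}$ is uniform in the upper bound for $v$. That bound on $a_z$ is correct, but the cutoff produces source terms involving the \emph{second} derivatives of $a$: writing $w=|v_x|^2\phi^2$, one finds (as the paper computes in its Step~5)
\[
\tfrac12 w_t\le \tfrac12(a_\xi+\epsilon)w_{xx}+Bw_x+Cw+P(v,\phi,\phi_x)\,|\phi_x|+\tfrac72\epsilon\phi_x^2v_x^2,
\]
where $P$ is a genuine polynomial of degree $3$ in $v$, coming from terms like $|a_{\xi z}v_x^3|\le 6v^3$ and $|a_z v_x^2|\le 2v^3$ after the cutoff rearrangement. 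These cannot be absorbed into $Cw$ and are not controlled by $\alpha_1$ alone. Hence you need an $\epsilon$-uniform upper bound $v^\epsilon\le M(T)$ \emph{before} the gradient estimate, not after; your attempt to recover it afterwards from the mass identity plus the interior Lipschitz bound is circular.

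The paper supplies this missing upper bound by an explicit barrier: in its Step~2 it shows that $V(t,x)=B(t)-\sqrt{\epsilon^{2/3}+x(1-x)}$, with $B(t)=B(0)+\tilde C t$, is a supersolution of the approximate problem, provided the boundary datum is relaxed from $1$ to $1-\epsilon^{1/3}$ (this slack is exactly what makes $(a(V,V_x)+\epsilon V_x)\nu\ge 1-\epsilon^{1/3}$ at $x=0,1$). With your boundary value $a_\epsilon\nu=1$ this particular barrier fails. Once $v^\epsilon\le M(T)$ is available, the paper also needs a uniform bound on $\epsilon\|v^\epsilon_x\|_\infty^2$ (obtained from the boundary condition: multiplying $a_\epsilon\nu=1-\epsilon^{1/3}$ by $v_x$ and using $a(v,v_x)v_x\ge|v_x|-v^2$ gives $\epsilon v_x^2\le v^2$ on $\partial(0,1)$, then the maximum principle with $\phi\equiv1$) to control the last source term $\tfrac72\epsilon\phi_x^2v_x^2$; your energy estimate $\epsilon\iint|v_x|^2\le C$ is only $L^2_{t,x}$ and does not suffice here. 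After these two ingredients your Steps (interior gradient, higher interior regularity, passage to the limit and weak boundary trace) are essentially the paper's Steps~5--7.
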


\begin{proof}
To prove this claim, we consider the following approximated Cauchy
problem
\begin{align}
&v_t =  \, \left( \frac{ v_x}{ \sqrt{ v^4 + (v_x)^2}} \right)_x
+\epsilon v_{xx} \qquad  t\in (0,T),\, \, x\in (0,1)\,,\label{RHEdvA} \\
&\left(\frac{v_x}{\sqrt{v^4 + (v_x)^2}} +\epsilon v_x\right) \nu =
1 -\epsilon^{1/3}, \qquad  t\in (0,T),\, \, x\in
\partial (0,1),\label{RHEdvAB}
\end{align}
where $\epsilon >0$. The proof is divided in several Steps. In
Steps 1 to 3 we prove some formal estimates that are also useful
to state the existence of solutions of
(\ref{RHEdvA})-\eqref{RHEdvAB} in Step 4. For simplicity we write
$$
\a(z,\xi) = \frac{\xi}{\sqrt{  z^4 +  (\xi)^2}},\qquad z \geq 0,
\,\xi \in\R.
$$
Let us observe that
\begin{equation}\label{fi1}
\a(z,\xi)  \xi \geq  |\xi|- z^2.
\end{equation}

\medskip\noindent {\em Step 1. $L^p$ bounds on $v$ for $p\in [1,\infty)$.}
Let us first consider the evolution of the $L^1$ norm. For that we
integrate (\ref{RHEdvA}) on $(0,1)$. We have
$$
\frac{d}{dt} \int_0^1 v(t,x)\, dx = (\a(v,v_x) + \epsilon v_x)(1)-
(\a(v,v_x) + \epsilon v_x)(0) = 2(1 -\epsilon^{1/3})\,,
$$
and thus,
\begin{equation}\label{mass1}
 \int_0^1 v(t,x)\, dx = \int_0^1 v_0(x)\, dx + 2 (1 -\epsilon^{1/3})
 t\,.
\end{equation}
Given $1\leq p <\infty$, we have
\begin{align*}
\frac{1}{p+1}\frac{d}{dt} \int_0^1 v^{p+1}(t,x)\, dx + \int_0^1
\a(v,v_x) (v^p)_x \,dx+ \epsilon p\int_0^1  v^{p-1}(v_x)^2\,dx &=
{\blue(}1
-\epsilon^{1/3}{\blue )} \int_{\partial (0,1)} v^p\\
&\leq \int_0^1 v^p \, dx +  \int_0^1 |(v^p)_x| \, dx\,,
\end{align*}
where the inequality
$$
v^p(0)+v^p(1) = \int_{\partial (0,1)} v^p \leq \int_0^1 v^p \, dx
+ \int_0^1 |(v^p)_x| \, dx
$$
holds in one dimension. Using (\ref{fi1}) we have
$$
\int_0^1 \a(v,v_x) (v^p)_x \geq  \int_0^1 |(v^p)_x| - p \int_0^1
v^{p+1}\,,
$$
hence
\begin{equation*}
\frac{1}{p+1}\frac{d}{dt} \int_0^1 v^{p+1}(t,x)\, dx + \epsilon p
\int_0^1  v^{p-1}(v_x)^2\,dx \leq  \int_0^1 v^p \, dx +  p
\int_0^1 v^{p+1} \, dx.
\end{equation*}
Using this recurrence relation, by Gronwall's inequality, we obtain that
\begin{equation*}
 \int_0^1 v(t,x)^{p} \, dx \leq C(T,p) \qquad \forall t\in (0,T),\forall p\in [1,\infty),
\end{equation*}
and that
\begin{equation}\label{de1}
\epsilon \int_0^T \int_0^1 v^{p-1} (v_x)^2 \, dx dt \leq C(T,p),
\qquad\forall p\in [1,\infty)\,,
\end{equation}
where the constant $C(T,p)$ does not depend on $\epsilon$.

\medskip\noindent {\em Step 2. $L^\infty$ bounds above and below on $v$ independent of $\epsilon$.}
Let us construct a supersolution to the Cauchy problem
\eqref{RHEdvA}-\eqref{RHEdvAB}. Let $V(t,x)= B(t) -
\sqrt{\epsilon^{2/3} + x(1-x)}$ with $B$ smooth and increasing.
Take $B(0)$ such that
$$
V(0,x)= B(0) - \sqrt{\epsilon^{2/3} + x(1-x)} \geq v_0(x).
$$
We compute
$$
V_t = B'(t),
$$
$$
V_x = \frac{(x-1/2)}{\sqrt{\epsilon^{2/3} + x(1-x)}},\qquad V_{xx} = \frac{\epsilon^{2/3}+1/4}{(\epsilon^{2/3} + x(1-x))^{3/2}},
$$
$$
\a(V,V_x)= \frac{V_x}{(V^4 + V_x^2)^{1/2}} = \frac{(x-1/2)}{D(t,x)},
$$
where $D(t,x)=\left( V(t,x)^4 (\epsilon^{2/3} + x(1-x)) +
(x-1/2)^2\right)^{1/2}$. Note that $D(t,x)$ is a smooth and
strictly positive function in $[0,1]$. Moreover, since $B$ is
increasing, $D\geq
(V(0,x)^4(\varepsilon^\frac{2}{3}+x(1-x))+(x-\frac{1}{2})^2)^\frac{1}{2}$.
Thus $|\a(V,V_x)_x| \leq C$ for a constant $C$ that can be taken
independent of $\epsilon$ and $t\in [0,T]$. Thus, a direct
computation shows that
$$
\a(V,V_x)_x + \epsilon V_{xx} \leq C + \epsilon \frac{\epsilon^{2/3}+1/4}{\epsilon} \leq C + \epsilon^{2/3}+\frac{1}{4} \leq C + \frac{5}{4}= \tilde C,
$$
where $\tilde C$ does not depend on $\epsilon \in (0,1]$. Take
$B'(t)\geq \tilde C$, for instance $B(t) = B(0) + \tilde C t$. Let
us prove that given $T > 0$, for $\epsilon > 0$ small enough
$V(t,x)$ satisfies
\begin{equation*}
(\a(V,V_x) + \epsilon V_x)\nu \geq 1 -\epsilon^{1/3},
\end{equation*}
for $t\in [0,T]$, hence $V(t,x)$ is a supersolution of the Cauchy
problem (\ref{RHEdvA})-\eqref{RHEdvAB} in $[0,T]$. Indeed, since
$$
D(t,0)=\left( (B(t)-\epsilon^{1/3} )^4 \epsilon^{2/3}  + 1/4
\right)^{1/2}\,,
$$
we have at $x=0$
\begin{align*}
(\a(V,V_x) + \epsilon V_x)\nu \vert_{x=0} & = \frac{1/2}{D(t,0)} +
\epsilon \frac{1/2}{\epsilon^{1/3}} = \frac{1}{\left( 1+ 4
(B(t)-\epsilon^{1/3} )^4 \epsilon^{2/3}  \right)^{1/2}} +
\frac{1}{2} \epsilon^{2/3} \geq  1 -\epsilon^{1/3}
\end{align*}
for $\epsilon > 0$ small enough, and analogously at $x=1$. Since
$V(t,x)$ is a supersolution for the Cauchy problem
(\ref{RHEdvA})-\eqref{RHEdvAB}, by the classical comparison
principle we get $v\leq V$ in $[0,T]\times [0,1]$, and thus there
exists $M>0$ depending only on $u_0$ and $T$ such that $v(t,x)\leq
M$ in $(t,x)\in [0,T]\times [0,1]$.

Let us finally observe that $v\geq \alpha_1$. Indeed, $\overline v
=\alpha_1$ is a subsolution for the Cauchy problem
(\ref{RHEdvA})-\eqref{RHEdvAB} and by the comparison principle in
its weak version, we deduce that $v\geq \alpha_1$.

\medskip\noindent {\em Step 3. $L^p$ bounds on $v_x$ independent of $\epsilon$.}
Putting together the estimates in Step 2 and (\ref{de1}), we
deduce that
\begin{equation*}
\int_0^T \int_0^1 |(v^p)_x| \, dx dt \leq C(T,p),
\end{equation*}
for any $p\in [1,\infty)$.

\medskip\noindent {\em Step 4. Existence of smooth solutions for the Cauchy problem \eqref{RHEdvA}-\eqref{RHEdvAB}.}
The existence of solutions of (\ref{RHEdvA})-\eqref{RHEdvAB}
follows from classical results in \cite{LSU} and \cite[Theorem
13.24]{lieberman2005second}. We note that thanks to the a priori
bounds stated above we could  use the flux
$$
\a_M(v,v_x) = \frac{v_x}{\sqrt{ \inf(|v|,M)^4 + v_x^2}}\,,
$$
so that the assumptions of the existence theorems in \cite{LSU}
and \cite[Theorem 13.24]{lieberman2005second} hold. Finally,
observe that we need to assume a compatibility condition on $v_0$
so that $v_0$ satisfies (\ref{RHEdvAB}). If $v_0$ does not satisfy
(\ref{RHEdvAB}), we modify it to define a function $v_{0,\epsilon}
\in W^{1,\infty}(0,1)$ satisfying (\ref{RHEdvAB}). This
modification is only done in a neighborhood of  $x\in \partial
(0,1)$ which vanishes as $\epsilon \to 0+$, so that
$v_{0,\epsilon}$ is locally Lipschitz inside $(0,1)$ with bounds
independent of $\epsilon$. Finally, we observe that this
modification can be done in such a way that
\begin{equation}\label{boundv0eps1}
\sup_{\epsilon\in (0,1]} \epsilon \Vert v_{0\epsilon
x}\Vert_\infty < \infty.
\end{equation}
Although we omit the details of the construction, let us check
that (\ref{boundv0eps1}) is compatible with (\ref{RHEdvAB}). For
that, notice that we can take $v_{0\epsilon x} =
A(\epsilon)\epsilon^{-a}$ with $a=\frac{1}{6}$ and $A(\epsilon) =
\frac{1}{\sqrt{2}} v_{0\epsilon}(0)^2 + O(\epsilon^{1/3})$. Indeed
substituting this expression in (\ref{RHEdvAB}), we have
$$
\frac{A(\epsilon)/\epsilon^a}{\sqrt{v_{0\epsilon}(0)^4 + A(\epsilon)^2/\epsilon^{2a}}} + \epsilon \frac{A(\epsilon)}{\epsilon^a} = 1-\epsilon^{1/3}.
$$
An asymptotic expansion shows $A(\epsilon) = \frac{1}{\sqrt{2}}
v_{0\epsilon}(0)^2 + O(\epsilon^{1/3})$, and thus
(\ref{boundv0eps1}) is compatible with (\ref{RHEdvAB}).

Let $v_\epsilon$ be the solution of the Cauchy problem
(\ref{RHEdvA})-\eqref{RHEdvAB}. Then $v_\epsilon$ has first
derivatives Holder continuous up to the boundary and for
$g=v_{\epsilon xx},v_{\epsilon t}$, we have
$$
\sup_{x\neq y}
\left\{\min(d((x,t),\mathcal{P}),d((y,s),\mathcal{P}))^{1-\delta}
\frac{|g(x)-g(y)|}{(|x-y|^2+|s-t|)^{\alpha/2}}\right\}
$$
for some $\alpha,\delta > 0$, where $\mathcal{P}$ is the parabolic
boundary of $(0,1)\times (0,T)$, that is $[0,1] \times \{0\} \cup
\{0,1\}\times (0,T)$, and $d(\cdot,\mathcal{P})$ denotes the
distance to $\mathcal{P}$. On the other hand, by the interior
regularity theorem \cite[Chapter V, Theorem 3.1]{LSU}, the
solution is infinitely smooth in the interior of the domain. At
this point the smoothness bounds depend on $\epsilon$.

\medskip\noindent {\em Step 5. A local Lipschitz bound on $v_\epsilon$ uniform on $\epsilon$.}
For simplicity of notation, let us write $v$ instead of $v_\epsilon$.
Let $w= |v_x|^2\phi^2$ where $\phi\geq 0$ is smooth with compact support  $[a,b]\subset (0,1)$.
This Step is a consequence of the following inequality
\begin{equation}\label{parabolicG1}
w_t \leq A(t,x) w_{xx} + B(t,x) w_x + C w + f(t,x),
\end{equation}
where $A,B$ are smooth functions, $C= (12 + \frac{\epsilon}{2})$,
and $0\leq f = P(v,\phi,\phi_x)|\phi_x| + \frac{7}{2}\epsilon
\phi_x^2  v_x^2$, where $P$ is a polynomial in $v$ of degree $3$.
Assume for the moment that the last term $ \epsilon \Vert
v_x^2(t)\Vert_\infty \in L^\infty(0,T)$. Using Step 2, this
implies that $f\in L^{\infty}([0,T]\times [0,1])$. Thus we may
replace $f$ by $\Vert f(t)\Vert_{\infty}$. The change of variables
$$
\bar w(t,x) = e^{-Ct} w(t,x) - \int_0^t f(s)\, ds
$$
permits to write (\ref{parabolicG1}) as $\bar w_t \leq A(t,x) \bar
w_{xx} + B(t,x) \bar w_x$. Then, using the maximum principle, this
implies
$$
\sup_{t\in [0,T]}\Vert \bar w(t) \Vert_\infty \leq \Vert \bar w(0)
\Vert_\infty\,
$$
hence we get
$$
\sup_{t\in [0,T]}\Vert w(t)\Vert_\infty\leq C(T,\phi, \Vert w(0)\Vert_\infty).
$$
Let us now prove the claim (\ref{parabolicG1}). We first compute
$$
\a_z(z,\xi) = \frac{-2z^3 \xi}{(z^4+\xi^2)^{3/2}}, \qquad
\a_{zz}(z,\xi) = \frac{-6z^2 \xi}{(z^4+\xi^2)^{3/2}} +  \frac{12z^6 \xi}{(z^4+\xi^2)^{5/2}},
$$
$$
\a_\xi(z,\xi) = \frac{z^4}{(z^4+\xi^2)^{3/2}}, \qquad \a_{\xi
z}(z,\xi) = \frac{-2z^7 + 4z^3 \xi^2 }{(z^4+\xi^2)^{5/2}}, \qquad
\mbox{and}\qquad\a_{\xi \xi}(z,\xi) = \frac{-3z^4 \xi
}{(z^4+\xi^2)^{5/2}}.
$$
We also compute $w_x = 2\phi\phi_x v_x^2 + 2 \phi^2  v_x v_{xx}$
and $ w_{xx} = (2\phi_x^2 + 2 \phi\phi_{xx}) v_x^2 + 8\phi\phi_x
v_x v_{xx} + 2\phi^2 v_{xx}^2 + 2\phi^2 v_x v_{xxx}$.
Differentiating (\ref{RHEdvA}) with respect to $x$ and multiplying
by $\phi^2$ we obtain
$$
\frac{1}{2} w_t =  \a_{zz} v_x^3 \phi^2 + 2 \a_{\xi z} v_x^2 v_{xx} \phi^2
+ \a_{\xi \xi} v_x v_{xx}^2 \phi^2 + \a_{z} v_x v_{xx} \phi^2 + \a_{\xi} v_x v_{xxx} \phi^2 + \epsilon v_x v_{xxx} \phi^2.
$$
Now, we get
$$
\a_{zz} v_x^3 \phi^2 = - \frac{6v^2
v_x^4\phi^2}{(v^4+v_x^2)^{3/2}} +  \frac{12v^6 v_x^4
\phi^2}{(v^4+v_x^2)^{5/2}} \leq 12 w\,,
$$
$$
2 \a_{\xi z} v_x^2 v_{xx} \phi^2  = \a_{\xi z} v_x w_x - 2 \a_{\xi
z} v_x^3 \phi\phi_x \leq \a_{\xi z} v_x w_x + 12 v^3 \phi
|\phi_x|\,,
$$
and
$$
\a_{\xi \xi} v_x v_{xx}^2 \phi^2 = \frac{1}{2} \a_{\xi \xi} v_{xx} w_x -  \a_{\xi \xi} v_{xx} v_x^2 \phi\phi_x =
\frac{1}{2} \a_{\xi \xi} v_{xx} w_x -  X,
$$
where $X= \a_{\xi \xi} v_{xx} v_x^2 \phi\phi_x$. Similarly, we
obtain
$$
\a_{z} v_x v_{xx} \phi^2   = \frac{1}{2} \a_z w_x - \a_z v_x^2 \phi\phi_x \leq \frac{1}{2} \a_z w_x + 2v^3 \phi|\phi_x|,
$$
and
\begin{align*}
\a_{\xi} v_x v_{xxx} \phi^2 &= \frac{1}{2} \a_\xi w_{xx} -\a_\xi
(\phi_x^2 + \phi\phi_{xx}) v_x^2 - 4 \a_\xi v_{xx} v_{x}\phi\phi_x
- \a_\xi v_{xx}^2 \phi^2
\\
&\leq  \frac{1}{2} \a_\xi w_{xx} + v^2(\phi_x^2 + \phi|\phi_{xx}|)
- Y - \a_\xi v_{xx}^2 \phi^2,
\end{align*}
where $Y= 4 \a_\xi v_{xx} v_{x}\phi\phi_x$. Direct estimates show
that
$$
|Y| \leq \frac{1}{2} \a_\xi v_{xx}^2 \phi^2 + 8 \a_\xi
v_x^2\phi_x^2 \leq \frac{1}{2} \a_\xi v_{xx}^2 \phi^2  + 8 v^2
\phi_x^2
$$
and
$$
|X| \leq \frac{1}{2} \a_\xi v_{xx}^2 \phi^2 +
\frac{\a_{\xi\xi}^2}{2\a_\xi } v_x^4 \phi_x^2 \leq \frac{1}{2}
\a_\xi v_{xx}^2 \phi^2 +  \frac{9}{2} v^2 \phi_x^2.
$$
Finally, let us compute the term
\begin{align*}
v_x v_{xxx} \phi^2 & = \frac{1}{2} w_{xx} - (\phi_x^2 +
\phi\phi_x) v_x^2 - 4\phi\phi_x v_x v_{xx} - \phi^2 v_{xx}^2
\\
& \leq \frac{1}{2} w_{xx} - \phi_x^2 v_x^2 + \frac{1}{2} \phi^2
v_x^2  + \frac{1}{2} \phi_x^2  v_x^2
+ 4 \phi_x^2  v_x^2  + \phi^2 v_{xx}^2 - \phi^2 v_{xx}^2 \\
& = \frac{1}{2} w_{xx} + \frac{1}{2} w  + \frac{7}{2} \phi_x^2
v_x^2.
\end{align*}
Putting all together, we get the desired claim \eqref{parabolicG1}
\begin{equation}\label{parabolicG3}
\frac{1}{2} w_t \leq \frac{1}{2} (\a_\xi +\epsilon) w_{xx} +
\left( \a_{\xi z} v_x  +  \frac{1}{2} \a_{\xi \xi} v_{xx}  +
\frac{1}{2} \a_z \right) w_x + \left(12 +
\frac{\epsilon}{2}\right) w + P(v,\phi,\phi_x)|\phi_x| +
\frac{7}{2}\epsilon \phi_x^2 v_x^2,
\end{equation}
where $P$ is a polynomial of degree $3$ in $v$.

Now, we have to show that $\epsilon \|v_x^2(t)\|_\infty \in
L^\infty(0,T)$. Let us first exploit the boundary condition in
(\ref{RHEdvAB}). Multiplying it by $v_x$ and using \eqref{fi1}, we
get
$$
|v_x|-v^2\leq \a(v,v_x)v_x=\frac{|v_x|^2}{(v^4+v_x^2)^{1/2}} +
\epsilon v_{x}^2 = (1 -\epsilon^{1/3}) v_x\,,
$$
and thus we get that $\epsilon v_{x}^2 \leq v^2$ on $\partial
(0,1)$. Moreover, using Step 2 we finally deduce that
\begin{equation}\label{linftyboundary}
\epsilon v_{x}^2(t) \leq \sup_{t\in [0,T]}(|v((t,0)|,|v(t,1)|)\leq
M\,, \qquad \mbox{on } \partial (0,1)\,.
\end{equation}
Taking $\phi=1$ in (\ref{parabolicG3}), we obtain
\begin{equation*}\label{parabolicG4}
\frac{1}{2} w_t \leq \frac{1}{2} (\a_\xi +\epsilon) w_{xx} + \left( \a_{\xi z} v_x  +  \frac{1}{2} \a_{\xi \xi} v_{xx}  +
\frac{1}{2} \a_z \right) w_x + (12 + \frac{\epsilon}{2}) w,
\end{equation*}
that together with \eqref{linftyboundary} and the maximum
principle, imply that
\begin{equation}\label{eBf1}
\epsilon \Vert v_x(t)^2\Vert_\infty \leq C,
\end{equation}
for some constant $C$ that depends on the bound
(\ref{boundv0eps1}), and thus independent of $\epsilon$.

Summarizing, now the term $\frac{7}{2}\epsilon \Vert \phi_x^2
v_x^2(t) \Vert_\infty \in L^\infty (0,T)$ with bounds independent
of $\epsilon$.  Again, Step 2 implies that $\Vert f(t)
\Vert_\infty \leq \Vert P(v(t),\phi,\phi_x)|\phi_x| \Vert_\infty +
\Vert \frac{7}{2}\epsilon \phi_x^2  v_x^2(t) \Vert_\infty \in
L^\infty(0,T)$ with bounds independent of $\epsilon$. Then the
argument given above shows that there are local Lipschitz bounds
on $v_\epsilon$ uniform in $\epsilon$.

\medskip\noindent {\em Step 6. Interior regularity of higher order derivatives uniform in $\epsilon$.}
Thanks to the smoothness results stated in Step 4 and the local
uniform bounds on the gradient in Step 5, the classical interior
regularity results in \cite[Chapter V, Theorem 3.1]{LSU} shows
uniform (in $\epsilon$) interior bounds for any space and time
derivative of $v_\epsilon$.

\medskip\noindent {\em Step 7. Passing to the limit as $\epsilon \to 0^+$.}
Letting $\epsilon \to 0^+$ is not completely obvious due to the
boundary condition (\ref{verticalBC}). Another difficulty stems
from the fact that we do not know if $v_{\epsilon t}$ are Radon
measures with uniform bounds in $\epsilon$. This means that the
notion of normal boundary trace has to be considered in a weak
sense as considered in \cite{ACM4:01} (see also \cite[Section
5.6]{ACMBook} or \cite{ACMMdirichlet}). Thus, we only sketch the
proof of this result. Let us first prove that the interior
regularity bounds on $v_\epsilon$ permit to pass to the limit and
obtain a solution $v$ of
\begin{equation*}
v_t =  \, \left( \frac{ v_x}{\sqrt{v^4 + (v_x)^2}} \right)_x
\qquad  \hbox{\rm in $\mathcal{D}^\prime((0,T)\times (0,1))$}\,.
\end{equation*}
Let
$$
\xi^\epsilon := v_{\epsilon t} =  \left( \frac{ v_{\epsilon
x}}{\sqrt{v_{\epsilon}^4 + (v_{\epsilon x})^2}} + \epsilon
v_{\epsilon x}\right)_x \qquad \mbox{and} \qquad \a_\epsilon =
\frac{ v_{\epsilon x}}{\sqrt{v_{\epsilon}^4 + (v_{\epsilon x})^2}}
+ \epsilon v_{\epsilon x}\,.
$$
Estimate (\ref{eBf1}) implies that $\a_\epsilon$ are uniformly
bounded independently of $\epsilon$. Then by extracting a
subsequence, we may assume that $\a_\epsilon \rightharpoonup \a
\in L^{\infty}((0,T)\times (0,1))$ weakly$^*$. On the other hand,
the interior regularity bounds on $v_\epsilon$ ensure that $\a=
\frac{ v_\eta}{\sqrt{v^4 + (v_x)^2}}$. By passing to the limit as
$\epsilon\to 0$, we have $v_t = \a_x$ in ${\cal
D}^{\prime}((0,T)\times (0,1))$. Finally, if we take $\varphi \in
C^1([0,T]\times [0,1])$ with $\varphi(0)=\varphi(T)=0$, multiply
(\ref{RHEdvA}) by $\varphi$ and integrate by parts, we obtain
$$
\int_0^T\int_0^1 v_\epsilon \varphi_t \, dx dt = \int_0^T\int_0^1 \a_\epsilon \varphi_x\, dx dt -
2(1-\epsilon^{1/3}) T.
$$
Letting $\epsilon\to 0^+$, we obtain
$$
\int_0^T\int_0^1 v  \varphi_t \, dx dt = \int_0^T\int_0^1 \a  \varphi_x\, dx dt -
2 T.
$$
This is a weak form of the boundary condition (\ref{verticalBC}).
The correct notion of weak trace is much more technical and is
described in \cite{ACMBook}. Using Lemma 5.7 in
\cite{ACMMdirichlet} one can directly obtain that $v$ satisfies
(\ref{verticalBC}) in this generalized sense. Since we do not need
this result here, we skip the details that would need several
technical definitions to be fully explained.
\end{proof}

\begin{remark}\label{rk:LB1}{\rm
Note that we can apply Step 5 to the smooth solution obtained in
Theorem \ref{thm:regv1D} to the Cauchy problem
\eqref{RHEdv}-\eqref{verticalBC}. In this case $\Vert
f\Vert_\infty \leq \Vert P(v,\phi,\phi_x)|\phi_x|\Vert_\infty$ and
we obtain a local Lipschitz bound for $v(t,x)$ which only depends
on local uniform bounds of $v(t,x)$ and on the local Lipschitz
bound of $v_0(x)$. }
\end{remark}

\begin{remark}\label{rk:LB2}{\rm
In Section \ref{sect:ESRHE} we will give sufficient conditions on
$u_0$ that imply that $v_t$ is a {R}adon measure. In that case,
the notion of weak trace $\a\cdot \nu$ can be found in
\cite{CER2,ChenFrid1}. }
\end{remark}

\begin{remark}{\rm
We could define the notion on entropy solutions of equation (\ref{RHEdv}) with boundary condition (\ref{verticalBC})
and prove that the solution constructed is indeed an entropy solution of it. We will not pursue this here.
}
\end{remark}

\subsection{Getting an entropy solution of \eqref{RHE} from \eqref{RHEdv}}\label{sect:ESRHE}

Here, we use several notations and definitions that are introduced
in the Appendix to which we refer for details. In this Section, we
come back to the notation $v(t,\eta)$ instead of $v(t,x)$,
$\eta\in (0,1)$. Recall that by passing to the limit as
$\epsilon\to 0^+$ we have found a solution $v$ of
\begin{equation}\label{RHEdvB}
v_t =  \, \left( \frac{ v_\eta}{\sqrt{v^4 +
(v_\eta)^2}} \right)_\eta  \qquad  \hbox{\rm in $\mathcal{D}^\prime((0,T)\times (0,1))$},
\end{equation}
for any $T > 0$. Thus, let $v(t,\eta)$ be the solution of
(\ref{RHEdvB}) constructed in Theorem \ref{thm:regv1D} which
satisfies $[\a(t,\eta)\cdot\nu]=1$ for $\eta=0,1$ and a.e. for
$t\in (0,T)$ in a weak sense. As we shall see, we do not need this
here, we only need a weaker form of the boundary condition as
expressed in (\ref{massBC}) below.

In the next Lemma we construct an entropy solution of (\ref{RHEg}) from a solution
$v(t,\eta)$ of (\ref{RHEdvB}). To prepare its statement, let
$u_0\in  L^\infty(\R)$ with $u_0(x) \geq \kappa > 0$ for $x\in [a,b]$, and $u_0(x)=0$
for $x\not\in [a,b]$. Assume that $u_0\in W^{1,\infty}([a,b])$.
Let $v_0(\eta) = \frac{1}{u_0(x)}$, $\eta\in (0,1)$, where $x = \varphi(0,\eta)$ is such that
$$
\int_a^{\varphi(0,\eta)} u_0(x) \, dx = \eta\,.
$$
Let $u(t,x)$ be defined in $[a-t,b+t]$ by
\begin{equation}\label{e:cv1}
u(t,x) = \frac{1}{v(t,\eta)}\,,\,\, \mbox{ where } x =
\varphi(t,\eta) = a - t + \int_0^\eta v(t,\bar \eta)d\bar \eta\,.
\end{equation}
By (\ref{mass1}), we have
\begin{equation}\label{massBC}
\int_0^1 v(t,\eta)\, d\eta = b-a + 2t\,,
\end{equation}
and $x=\varphi(t,\eta)\in [a-t,b+t]$ when $\eta$ varies in $[0,1]$.
Note that
$$
\int_{a-t}^{\varphi(t,\eta)} u(t,x) dx = \eta, \qquad \eta\in (0,1).
$$
We define $u(t,x) = 0$, $x\not \in [a-t,b+t]$, $t\in (0,T)$. Notice that $u(t,x)\geq \kappa(t) > 0$ for any
$x\in (a-t,b+t)$ and any $t > 0$.

The statement $(ii)$ in Theorem \ref{thm:reg1D} follows from next Proposition.

\begin{proposition}\label{prop:suficient}
Given $u$ defined by \eqref{e:cv1} where $v$ is a solution given
by Theorem {\rm\ref{thm:regv1D}}. Then $u\in C([0,T],L^1(\R))$,
$u(0)=u_0$, and satisfies
\begin{itemize}
\item[(i)] $u(t)\in BV(\R)$, $u(t)\in W^{1,1}(a-t,b+t)$ for almost
any $t\in (0,T)$, and $u(t)$ is smooth inside its support,

\item[(ii)] $u_t = \z_x$ in $\mathcal{D}^\prime((0,T)\times \R)$,
where $\z(t) = \frac{u(t) u_x(t)}{\sqrt{u(t)^2+ u_x(t)^2}}$\,,

\item[(iii)] $u(t,x)$ is the entropy solution of \eqref{RHE} with
initial data $u_0$ in $(0,T)$.
\end{itemize}
\end{proposition}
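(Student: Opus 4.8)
The plan is to undo the change of variables carefully and transfer the regularity of $v$ obtained in Theorem~\ref{thm:regv1D} into regularity for $u$, then verify the three items in turn. First I would establish the basic structural facts. Since $v$ is smooth in the interior of $(0,T)\times(0,1)$ with $\alpha_1\le v\le M$, the map $\eta\mapsto\varphi(t,\eta)=a-t+\int_0^\eta v(t,\bar\eta)\,d\bar\eta$ is a smooth increasing diffeomorphism of $(0,1)$ onto $(a-t,b+t)$ for each $t$, with $\varphi_\eta=v\ge\alpha_1>0$; hence it has a smooth inverse $\eta=\eta(t,x)$, and $u(t,x)=1/v(t,\eta(t,x))$ is well-defined and smooth inside $(a-t,b+t)$, with $0<1/M\le u\le 1/\alpha_1$ there. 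This gives the ``smooth inside its support'' part of (i). For the $BV$ and $W^{1,1}$ claims: from $u=1/v(\cdot,\eta(t,x))$ one computes $u_x=-v_\eta/v^3$ (using $\eta_x=1/v$), so $\int_{a-t}^{b+t}|u_x|\,dx=\int_0^1 |v_\eta|/v^2\,d\eta$, which is finite for a.e.\ $t$ by the Step~3 bound $\int_0^T\!\!\int_0^1|v_\eta|\,d\eta\,dt<\infty$ together with the lower bound on $v$; since $u$ additionally has two jumps of finite size at $x=a-t,b+t$ and is $0$ outside, $u(t)\in BV(\R)$ and $u(t)\in W^{1,1}(a-t,b+t)$ for a.e.\ $t$. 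Continuity $u\in C([0,T],L^1(\R))$ and $u(0)=u_0$ follow from the corresponding continuity of $v$ (in $L^1(0,1)$, say) via the explicit formula and dominated convergence, using the uniform bounds; in particular $v(0)=v_0=1/u_0(\varphi(0,\cdot))$ by construction.

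For item (ii), I would differentiate the defining relation $\int_{a-t}^{\varphi(t,\eta)}u(t,x)\,dx=\eta$ rigorously. Differentiating in $\eta$ gives $u(t,\varphi)\varphi_\eta=1$ pointwise; differentiating in $t$ and using that $v$ solves \eqref{RHEdvB} with the weak boundary condition recorded in \eqref{massBC}, one obtains, for a test function localized away from $\{x=a-t,b+t\}$, the identity $u_t=\bigl(\tfrac{uu_x}{\sqrt{u^2+u_x^2}}\bigr)_x$ in $\mathcal D'$ of the open support region; note that under the change of variables the flux $uu_x/\sqrt{u^2+u_x^2}$ corresponds precisely to $-v_\eta/\sqrt{v^4+v_\eta^2}$ evaluated at $\eta(t,x)$ (this is the computation already carried out formally in the introduction, now justified by smoothness). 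To get the identity on all of $(0,T)\times\R$ rather than just the interior of the support, I would test against $\varphi\in C_c^\infty((0,T)\times\R)$, split the spatial integral using a cutoff near the moving boundary, and check that the boundary terms generated cancel against the contributions of the $\pm c$-moving jumps — this is exactly where the weak boundary condition \eqref{massBC} (equivalently \eqref{verticalBC} for $v$, equivalently \eqref{BCmm1} for $u$) is used, and it is the most delicate bookkeeping in the argument. Outside $[a-t,b+t]$ both sides vanish, so the claim $u_t=\z_x$ with $\z=uu_x/\sqrt{u^2+u_x^2}$ holds in $\mathcal D'((0,T)\times\R)$.

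For item (iii), I would verify that this $u$ meets the definition of entropy solution recalled in the Appendix: $u\in C([0,T],L^1(\R))$ with $T_k(u)\in BV$ (clear from (i) since $u$ is bounded), the Radon--measure/$L^1$ integrability of the relevant truncated quantities (from the Step~1 and Step~3 bounds transported back), the distributional equation $u_t=\Div\,\b(u,Du)$ (which is (ii)), and the family of entropy (Kruzhkov-type) inequalities associated with the functional $f$ of \eqref{RHE}. The last are inherited by testing the equation against the admissible test functions; since $u$ is smooth strictly inside its support and the only singular behaviour is the two vertical jumps at $x=a-t,b+t$, and since the entropy condition on a jump set for this class of equations says precisely that the profile is vertical there (as recalled before \eqref{BCmm1}), the jump part of every entropy inequality is satisfied with equality and the absolutely continuous part is satisfied by the classical chain rule. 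Having exhibited an entropy solution with datum $u_0$, uniqueness from \cite{ACMRelat,ACMRelatE} identifies it with the entropy solution of the theorem statement, and all of (i)--(iii) then also hold for that solution. The main obstacle I anticipate is item (ii)'s boundary-term cancellation: making rigorous, at the level of distributions on all of $\R$, the interaction between the degenerate (vanishing-to-the-left-and-right) structure of $u$, the moving free boundary, and the weak sense in which the vertical-contact boundary condition holds for $v$, given that $v_t$ is not known a priori to be a measure.
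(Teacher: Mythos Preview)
Your treatment of (i) matches the paper's.

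For (ii) the paper takes a cleaner route than your cutoff-plus-boundary-cancellation plan. Given $\phi\in\mathcal{D}(Q_T)$, set $\overline{\phi}(t,\eta)=\phi(t,\varphi(t,\eta))$; then $\phi_t(t,\varphi)=\overline{\phi}_t-\phi_x(t,\varphi)\,\varphi_t$. Changing variables $x=\varphi(t,\eta)$ (Jacobian $v$) gives directly
\[
-\int_0^T\!\!\int_\R u\,\phi_t\,dx\,dt
=-\int_0^T\!\!\int_0^1(\overline{\phi}_t-\phi_x\,\varphi_t)\,d\eta\,dt
=\int_0^T\!\!\int_0^1\phi_x(t,\varphi)\,\varphi_t\,d\eta\,dt,
\]
the $\overline{\phi}_t$ term vanishing by compact support in $t$. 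Using $\varphi_t=v_\eta/\sqrt{v^4+v_\eta^2}$ together with the identity \eqref{CV1} one lands on $-\int\!\int\z\,\phi_x$. No boundary terms are ever generated: the change of variables absorbs the moving free boundary into the fixed interval $(0,1)$, and only the mass identity \eqref{massBC} is used. So the obstacle you flag for (ii) dissolves under this device.

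For (iii) there is a genuine gap in your sketch. Your plan to derive the entropy inequalities by ``testing the equation'' and invoking the chain rule requires multiplying $u_t=\z_x$ by $p(u)\phi=T(u)S(u)\phi$, but $u_t$ is \emph{not known to be a Radon measure} under the hypotheses (this is exactly the issue you anticipated, only it bites in (iii) rather than (ii)). The paper confronts this directly. Your observation about the jump part is correct and corresponds to \eqref{A3E1ESol}; together with the absolutely continuous part one obtains \eqref{TE1ESol}. What remains is to prove the inequality \eqref{e:ineqju1} linking these to $\int J_{TS}(u)\phi_t$, and that is where the real difficulty lies. The paper replaces the time derivative by discrete difference quotients $\Delta_\tau^\pm w(t)=\pm\tau^{-1}(w(t\pm\tau)-w(t))$, uses the convexity inequality $\Delta_\tau^+u\cdot p(u(\cdot+\tau))\ge\Delta_\tau^+J(u)$ in place of the chain rule, and passes to the limit $\tau\to0^+$; the key limit \eqref{e:tau3to0} is again handled by transferring to $\eta$-coordinates. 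Your appeal to the classical chain rule does not cover this step.
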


\begin{proof}
$(i)$ Since $v$ is bounded and bounded away from zero from Step 2
in Theorem \ref{thm:regv1D}, then $u$ is bounded and bounded away
from zero in its support. The smoothness properties of $v$ prove
that $u\in C([0,T],L^1(\R))$, $u(0)=u_0$, and $u(t)$ is smooth
inside its support. By Step 3 from Theorem \ref{thm:regv1D}, we
have that $u(t)\in W^{1,1}(a-t,b+t)$ for almost any $t\in (0,T)$.
This implies that $u(t)\in BV(\R)$ for almost any $t\in (0,T)$.
From the change of variables (\ref{e:cv1}) we have that
\begin{equation}\label{CV1}
\frac{u_x}{\sqrt{u^2+u_x^2}} = - \frac{v_\eta}{\sqrt{v^4+v_\eta^2}}.
\end{equation}

\smallskip\noindent $(ii)$ For simplicity, let us write
$Q_T = (0,T)\times \R$, and $\Omega(t)= (a-t,b+t)$.
Since
$$
Du(t) =  u_x\1_{\Omega(t)} - u^i(t)
\nu^t\mathcal{H}^{0}\res{\partial \Omega(t)},
$$
we have that $u\in L^1_{loc, w}(0, T; BV(\R))$. We have denoted by
$u^i(t)$ the trace of $u\vert_{\Omega(t)}$ on $\partial
\Omega(t)$. Note that it coincides with $u^+(t)$. Let us prove
that
\begin{equation}\label{divzESAA} u_t =  \z_x  \quad \hbox{in} \ \
{\mathcal D}^{\prime}((0,T)\times \R).
\end{equation}
Let $\phi\in {\mathcal D}(Q_T)$. Let $\overline{\phi}(t,\eta) =
\phi(t,\varphi(t,\eta))$, $\eta \in [0,1]$. Then
$\overline{\phi}_t = \phi_t(t,\varphi(t,\eta)) +
\phi_x(t,\varphi(t,\eta)) \varphi_t$ and
\begin{align*}
- \int_0^T \int_{\R} u \phi_t \, dx dt &= - \int_0^T
\int_{\Omega(t)} u \phi_t \, dx dt = - \int_0^T \int_0^1
\frac{1}{v} (\overline{\phi}_t -  \phi_x(t,\varphi(t,\eta))
\varphi_t) v\, d\eta dt
\\
&=- \int_0^T \int_0^1  (\overline{\phi}_t -
\phi_x(t,\varphi(t,\eta)) \varphi_t) \, d\eta dt =
 \int_0^T \int_0^1   \phi_x(t,\varphi(t,\eta)) \varphi_t  \, d\eta dt
\\
&= \int_0^T \int_0^1   \phi_x(t,\varphi(t,\eta))
\frac{v_\eta}{\sqrt{v^4+v_\eta^2}}  \, d\eta dt = - \int_0^T
\int_{\Omega(t)} \frac{uu_x}{\sqrt{u^2+u_x^2}} \phi_x(t,x) \, dx
dt
\\
&= - \int_0^T \int_{\R} \z \phi_x  \, dx dt\,,
\end{align*}
where \eqref{CV1} was used. Thus (\ref{divzESAA}) holds.

\medskip\noindent $(iii)$
To prove that $u$ is an entropy solution of (\ref{RHE}),
we have to prove that
\begin{align}
\int_{Q_T}h_{S}(u,DT(u))\phi \,dx dt & +\int_{Q_T}  h_T(u,
DS(u))\phi \,dx dt \nonumber \\ & \leq \int_{Q_T}J_{TS}(u)\phi_t
\,dx dt-\int_0^T\int_{\R} \z(t,x)\cdot\nabla\phi(t) T(u(t))S(u(t))
\,dx dt,\label{subrsolucioI}
\end{align}
holds for any any $T, S \in {\mathcal T}^+$ and any $\phi\in
\mathcal D((0, T) \times\R)$, $\phi(t,x) = \eta(t)\rho(x)$. As in
\cite[Proposition 1]{ACMRelatQ}, we have
\begin{equation}\label{0jumpRH}
(h_S(u(t),DT(u(t))))^s =  \left\vert D^jJ_{SR T'}(u(t))
 \right\vert = J_{S R T'}(u^i(t)) {\mathcal H}^{0}\res{\partial \Omega(t)}
\end{equation}
and \begin{equation}\label{0jumpRHI}(h_T(u(t),DS(u(t))))^s =
\left\vert D^j J_{TR S'}(u(t)) \right\vert = J_{T R
S'}(u^i(t)) {\mathcal H}^{0}\res{\partial \Omega(t)},
\end{equation}
where $R(r)=r$, $r\in\R$. Thus, by (\ref{0jumpRH}) and
(\ref{0jumpRHI}), we get
\begin{align}
(h_S(u(t),DT(u(t))))^s + (h_T(u(t),DS(u(t))))^s & = \left(J_{S R
T'}(u^i(t))+ J_{T R S'}(u^i(t)) \right) {\mathcal
H}^{0}\res{\partial \Omega(t)} \nonumber \\ & = \left(TS R(u^i(t))
- J_{TS}(u^i(t))\right){\mathcal H}^{0}\res{\partial
\Omega(t)}.\label{A3E1ESol}
\end{align}
On the other hand, it is easy to prove that
\begin{align}
\int_{Q_T}(h_{S}(u,DT(u)))^{ac}\phi \, dx dt \,+ &
\int_{Q_T}(h_T(u, DS(u)))^{ac}\phi \, dx dt \nonumber
\\ &= \displaystyle\int_0^T\int_{\Omega(t)} \z(t,x)\cdot [T(u(t,x
))S(u(t,x))]_x \phi(t) \, dx dt.\label{acE1ESol}
\end{align}
Adding (\ref{A3E1ESol}) and (\ref{acE1ESol}), we obtain
\begin{align}
\int_{Q_T} \phi h_S(u(t),DT(u(t))) \,dx dt  \, + &\,\int_{Q_T}
\phi h_T(u(t),DS(u(t))) \,dx dt \nonumber\\ = &\, \int_0^T
\int_{\partial \Omega(t)} \left(TSR (u^i(t)) -
J_{TS}(u^i(t))\right) \phi(t) \, d{\mathcal H}^{0}\, dt
\nonumber\\ &\,+ \displaystyle\int_0^T\int_{\Omega(t)}
\z(t,x)\cdot  [T(u(t,x ))S(u(t,x))]_x \phi(t) \, dx dt.
\label{TE1ESol}
\end{align}

To simplify the subsequent notation let us denote $p(u) =
T(u)S(u)=J'(u)$ and $J(u)= J_{TS}(u)$. Let us now prove that
\begin{align}
\int_0^T \int_{\partial \Omega(t)} \left(p(u^i(t))u^i(t) -
J(u^i(t))\right) \phi(t) \, d{\mathcal H}^{0}\, dt \, +&\,
\int_0^T\int_{\Omega(t)} \z \cdot  [p(u)]_x \phi \, dx dt \nonumber \\
&\leq \displaystyle \int_{Q_T}J(u)\phi_t \, dxdt -
\int_0^T\int_{\Omega(t)}\phi_x \z  p(u) \, dx dt.
\label{e:ineqju1}
\end{align}

The main technical difficulty comes from the fact that we do not know that $u_t=\z_x$ is a Radon measure.
We circumvent this difficulty by using instead discrete derivatives.
Let us denote
$$
\Delta_\tau^+ w(t) = \frac{1}{\tau} (w(t+\tau) - w(t)), \qquad \Delta_\tau^- w(t) = \frac{1}{\tau} (w(t) - w(t-\tau)).
$$
Then, we can obtain
\begin{align*}
-\int_0^T \int_{\R}  up(u)\phi \,& \Delta_\tau^- \1_{\Omega(t)}\,
dx dt =  \int_0^T \int_{\Omega(t)} \Delta_\tau^+(up(u)\phi) \, dx
dt
\\ = & \int_0^T \int_{\Omega(t)} \Delta_\tau^+ u (t)
p(u(t+\tau))\phi(t+\tau) \, dx dt  +
\int_0^T \int_{\Omega(t)} u(t)   \Delta_\tau^+ (p(u)\phi) (t) \, dx dt \\
 \geq & \int_0^T \int_{\Omega(t)} \Delta_\tau^+ J(u) (t) \phi(t+\tau) \, dx dt  +
\int_0^T \int_{\Omega(t)} u(t)   \Delta_\tau^+ (p(u)\phi) (t) \, dx dt \\
 = & - \int_0^T \int_{\Omega(t)} J(u) (t) \Delta_\tau^- [\phi(t+\tau)] \, dx dt
-\int_0^T \int_{\R} J(u) (t) \phi(t) \Delta_\tau^- \1_{\Omega(t)}\, dx dt  \\
& + \int_0^T \int_{\Omega(t)} u(t)   \Delta_\tau^+ (p(u)\phi) (t)
\, dx dt
\end{align*}
which is a discrete version of (\ref{e:ineqju1}). Note that we
have used the inequality $\Delta_\tau^+ u (t)   p(u(t+\tau)) \geq
\Delta_\tau^+ J(u)$ which is a consequence of the convexity of
$J$. By letting $\tau \to 0^+$, we need to show that
\begin{equation}\label{e:tau1to0}
\int_{Q_T}  (u(t)p(u(t)) - J(u(t))) \phi(t) \Delta_\tau^-
\1_{\Omega(t)} \, dx dt \to \int_0^T \int_{\partial \Omega(t)}
\left(p(u^i(t))u^i(t) - J(u^i(t))\right) \phi(t) \, d{\mathcal
H}^{0}\, dt,
\end{equation}
\begin{equation}\label{e:tau2to0}
\int_0^T \int_{\Omega(t)} J(u) (t) \Delta_\tau^- [\phi(t+\tau)] \, dx dt
\to  \int_0^T \int_{\Omega(t)} J(u) (t) \phi_t(t) \, dx dt,
\end{equation}
and
\begin{equation}\label{e:tau3to0}
\int_0^T \int_{\Omega(t)} u(t)   \Delta_\tau^+ (p(u)\phi) (t) \,
dx dt \to  \int_0^T \int_{\Omega(t)}     (p(u)\phi)_x   \z \, dx
dt.
\end{equation}
This will result in (\ref{e:ineqju1}). The limit (\ref{e:tau1to0})
follows since $u(t)\in BV(\R)$ a.e. in $t$, $u\in
L^1_{w}(0,T;BV(\R))$ (hence $\Vert u_x(t)\Vert \in L^1(0,T)$) and
the trace functions $u(t,a-t)$, $u(t,b+t)$ are integrable in
$[0,T]$. The second limit (\ref{e:tau2to0}) follows easily. To
prove (\ref{e:tau3to0}), for any $\tau > 0$  let
$$
\psi^{\tau}(t,x):= \frac{1}{\tau} \int_{t}^{t+\tau} \phi (s,x) p(u(s,x)) \, ds,
$$
and observe that
$$
\Delta_\tau^+ (p(u)\phi) (t,x) = \frac{\partial}{\partial t} \psi^{\tau}(t,x).
$$
Observe also that
$$
\frac{d}{dt} [\psi^{\tau}(t,\varphi(t,\eta))] = \frac{\partial}{\partial t} \psi^{\tau}(t,\varphi(t,\eta))  +
\psi^{\tau}_x(t,\varphi(t,\eta))\varphi_t(t,\eta).
$$
Then, as $\tau\to 0^+$
\begin{align*}
\int_0^T \int_{\Omega(t)} u(t)   \Delta_\tau^+ (p(u)\phi) (t) \,
dx dt  = &   \int_0^T \int_{\Omega(t)} u(t)
\frac{\partial}{\partial t}  \psi^{\tau}(t,x) \, dx dt
\\ = & \int_0^T \int_0^1 \left( \frac{d}{dt} [\psi^{\tau}(t,\varphi(t,\eta))] -  \psi^{\tau}_x(t,\varphi(t,\eta))\varphi_t(t,\eta) \right) \, d\eta dt \\
= & - \int_0^T \int_0^1   \psi^{\tau}_x(t,\varphi(t,\eta)) \frac{v_\eta}{\sqrt{v^4 + v_\eta^2}}   \, d\eta dt
\\ = & \int_0^T \int_0^1   \psi^{\tau}_x(t,x) \frac{uu_x}{\sqrt{u^2 + u_x^2}}   \, dx dt \to \int_0^T \int_{\Omega(t)}     (p(u)\phi)_x   \z \, dx dt.
\end{align*}
We have proved (\ref{e:tau3to0}). Finally we observe that from
(\ref{TE1ESol}) and (\ref{e:ineqju1}) we obtain
(\ref{subrsolucioI}).
\end{proof}

\begin{remark}{\rm
In a similar way, using this time
$$
\Delta_\tau^+(up(u)\phi)(t)= \Delta_\tau^+(p(u)\phi)(t)u(t+\tau) + p(u(t))\phi(t)
\Delta_\tau^+(u)(t)
$$
and $\Delta_\tau^+(J(u))(t)\geq p(u(t))\Delta_\tau^+(u)(t)$ one can prove that the opposite inequality in
(\ref{e:ineqju1}) holds, and we have equality. Note also that equality holds also in the entropy conditions
(\ref{subrsolucioI}).
}
\end{remark}

With some additional regularity on the initial condition, one has
that $u_t$ is a Radon measure in $(0,T)\times \R$. Indeed, the
following proposition follows immediately from the results in
\cite{ACMSV,CER2}.

\begin{proposition}\label{Rm1}
Let $u_0\in  L^\infty(\R)$, $u_0(x) \geq \kappa > 0$ for $x\in
[a,b]$ and $u_0=0$ outside $[a,b]$. Assume that $u_0\in
W^{2,1}(a,b)$.  If $u$ is the entropy solution of \eqref{RHE} with
initial data $u_0$, then $u_t$ is a Radon measure in $(0,T)\times
\R$.
\end{proposition}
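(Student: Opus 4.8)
The plan is to deduce the result from the abstract structure behind the notion of entropy solution, and to back it up with a self-contained argument in the mass variables.

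\textbf{Semigroup route.} Recall from the Appendix (and from \cite{ACMRelat,ACMRelatE}) that the entropy solution $u$ is the mild solution furnished by Crandall--Liggett's theorem for the m-accretive operator $\A$ in $L^1(\R)$ formally given by $\A w=-\Div\,\b(w,Dw)$. For such operators the standard regularity theory (in the version adapted to this degenerate problem) says that if the initial datum lies in the domain of $\A$ -- more precisely, if there is a finite Radon measure $\mu$ realising $-\Div\,\b(u_0,Du_0)$ in the entropy sense -- then $t\mapsto u(t)$ is Lipschitz from $[0,T]$ into $L^1(\R)$ with a modulus controlled by the total mass of $\mu$, so that $u_t$ is a Radon measure on $(0,T)\times\R$. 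The statement thus reduces to checking that the present $u_0$ has this property, and this is what I would invoke from \cite{ACMSV,CER2}. To verify it by hand I would split into the interior of $[a,b]$ and its endpoints. On $(a,b)$ the flux $\b(u_0,u_{0x})=u_0u_{0x}/\sqrt{u_0^2+u_{0x}^2}$ is the composition of the map $(z,\xi)\mapsto z\xi/\sqrt{z^2+\xi^2}$, smooth on $\{z\ge\kappa\}$, with the $W^{2,1}$ function $(u_0,u_{0x})$; hence it lies in $W^{1,1}(a,b)$ and $\Div\,\b(u_0,Du_0)\in L^1(a,b)$. At $x=a$ and $x=b$ the datum jumps down to $0$, which is exactly the vertical-contact jump compatible with the entropy structure (cf.\ \eqref{BCmm1}), so it adds to $\A u_0$ only a finite atomic mass and nothing worse. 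Consequently $-\Div\,\b(u_0,Du_0)$ is a finite Radon measure, the abstract estimate applies, and the singular part of the resulting $u_t$ is carried by the two moving jump lines $x=a-t$, $x=b+t$, with total mass $\int_0^T\bigl(u^i(t,a-t)+u^i(t,b+t)\bigr)\,dt<\infty$.

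\textbf{Mass-variable route.} Alternatively I would argue directly on \eqref{RHEdvB}. With $v$ the solution of Theorem \ref{thm:regv1D}, obtained as the limit of the smooth $v_\epsilon$ of \eqref{RHEdvA}--\eqref{RHEdvAB}, I would differentiate \eqref{RHEdvA} in time: $z_\epsilon:=\partial_t v_\epsilon$ solves the linear equation $\partial_t z_\epsilon=\bigl(\a_z(v_\epsilon,v_{\epsilon x})\,z_\epsilon+(\a_\xi(v_\epsilon,v_{\epsilon x})+\epsilon)\,z_{\epsilon x}\bigr)_x$, uniformly parabolic for fixed $\epsilon>0$ and in conservation form, whose boundary flux is the time derivative of the left-hand side of \eqref{RHEdvAB} and is therefore identically zero, since $1-\epsilon^{1/3}$ does not depend on $t$. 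An $L^1$ estimate (Kato's inequality in conservation form) then gives $\frac{d}{dt}\int_0^1|z_\epsilon(t,x)|\,dx\le 0$, hence $\int_0^1|\partial_t v_\epsilon(t,x)|\,dx\le\int_0^1|\partial_t v_\epsilon(0,x)|\,dx$ for all $t\in(0,T)$. Since $u_0\in W^{2,1}(a,b)$ with $u_0\ge\kappa$, the chain rule gives $v_0\in W^{2,1}(0,1)$; where the boundary-layer modification of Step 4 is inactive one has $\partial_t v_\epsilon(0,\cdot)=(\a(v_0,v_{0x}))_x+\epsilon v_{0xx}$ bounded in $L^1$ uniformly in $\epsilon$ (the flux being $W^{1,1}$ as above and $\epsilon\|v_{0xx}\|_{L^1}\to 0$), while on the shrinking boundary layers the explicit choice $v_{0\epsilon x}=A(\epsilon)\epsilon^{-1/6}$ of Step 4 makes the total variation of $\a(v_{0\epsilon},v_{0\epsilon x})$ of order $O(1)$ and $\epsilon\int|v_{0\epsilon xx}|$ of order $\epsilon^{5/6}\to 0$. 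Thus $\partial_t v_\epsilon$ is bounded in $L^\infty(0,T;L^1(0,1))$ uniformly in $\epsilon$; passing to the limit, $v_t\in\M((0,T)\times(0,1))$, and transferring through the change of variables \eqref{e:cv1} -- smooth in the interior, with $x=\varphi(t,\eta)$ differentiable in $t$ -- yields $u_t\in\M((0,T)\times\R)$, once more with the only singular contributions along $x=a\pm t$.

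\textbf{Main obstacle.} The delicate point, common to both approaches, is the behaviour at the boundary of $\mathrm{supp}(u_0)$. In the semigroup formulation it is the verification that a datum jumping down to $0$ at the ends of its support lies in the (suitably extended) domain of $\A$, i.e.\ that the singular part of $-\Div\,\b(u_0,Du_0)$ there reduces to a finite atom forced by the vertical-contact condition \eqref{BCmm1} and carries no worse singularity; in the mass-variable formulation it is the uniform-in-$\epsilon$ bound on $\int_0^1|\partial_t v_\epsilon(0,x)|\,dx$ produced by the boundary-layer modification that makes $v_0$ compatible with \eqref{RHEdvAB}. Everything else is a routine application of abstract nonlinear semigroup theory, respectively of the linear parabolic maximum principle, together with the change of variables already in force in this section.
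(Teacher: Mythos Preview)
The paper does not actually prove this proposition: it simply states that it ``follows immediately from the results in \cite{ACMSV,CER2}''. Your semigroup route is precisely the content of those references --- the $W^{2,1}$ hypothesis and the lower bound $u_0\ge\kappa$ on $[a,b]$ place $u_0$ in (the closure of) the domain of the generator, with $-\Div\,\b(u_0,Du_0)$ a finite Radon measure (an $L^1$ piece on $(a,b)$ plus two atoms at the endpoints coming from the vertical-contact jumps), and the B\'enilan--Crandall refinement of the Crandall--Liggett estimate then yields the bound on $u_t$. So on this route you and the paper agree, except that you have spelled out what the citations contain.

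Your mass-variable route is a genuinely different argument, not in the paper, which trades the abstract semigroup machinery for a direct $L^1$ contraction estimate on $\partial_t v_\epsilon$ via Kato's inequality. It has the advantage of staying entirely within the framework already set up in Theorem~\ref{thm:regv1D}, and of making the role of the $W^{2,1}$ hypothesis completely transparent (it is exactly what is needed for $v_0\in W^{2,1}(0,1)$ and hence for $(\a(v_0,v_{0x}))_x\in L^1$). The only point that would need more care than you give it is the uniform-in-$\epsilon$ control of $\int_0^1|\partial_t v_\epsilon(0,x)|\,dx$ on the boundary layers: knowing the scale $v_{0\epsilon x}\sim\epsilon^{-1/6}$ is not by itself enough --- you also need to specify the layer width and the profile so that $\int|v_{0\epsilon xx}|$ and the total variation of $\a(v_{0\epsilon},v_{0\epsilon x})$ across the layer stay bounded. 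This is a routine construction, but it is where the argument has to be pinned down.
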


From Proposition \ref{Rm1} and the results in \cite{CER2}, it
follows that $[\z\cdot\nu^{\Omega(t)}]=-u^i(t)$ on
$\partial\Omega(t)$ for almost any $t\in (0,T)$. This permits also
to define the notion of normal trace of $\a(v,v_\eta)$ in the
sense of \cite{CER2,ChenFrid1}.


\section{Regularity for touching-down initial data}\label{sect:Extensions}

Let us start by getting local estimates.

\begin{proposition}\label{prop:locL1}
Let $u_0\in  L^\infty(\R)$ with $u_0(x) \geq \kappa > 0$ for $x\in
[a,b]$, and $u_0(x)=0$ for $x\not\in [a,b]$. Assume that $u_0\in
W^{1,\infty}_{\rm loc}(a,b)$. The entropy solution $u(t,x)$ of
\eqref{RHE} with $u(0)=u_0$ satisfies $(i)$ and $(ii)$ in Theorem
{\rm\ref{thm:reg1D}}.
\end{proposition}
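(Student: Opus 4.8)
The plan is to reduce \eqref{RHE} to the equation \eqref{RHEdvB} for $v=\varphi_\eta$ exactly as in Section~\ref{sectheo}, the only novelty being that the transformed initial datum is now merely locally Lipschitz in $(0,1)$, and then to invoke Proposition~\ref{prop:suficient}.

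First I would pass to mass variables. Setting $v_0(\eta)=1/u_0(\varphi(0,\eta))$ with $\eta=\int_a^{\varphi(0,\eta)}u_0$, the hypothesis $\kappa\le u_0\le\|u_0\|_\infty$ on $[a,b]$ gives $\alpha_1:=\|u_0\|_\infty^{-1}\le v_0\le\kappa^{-1}=:\alpha_2$ on all of $(0,1)$, and $\int_0^1 v_0=b-a$. Since $u_0\in W^{1,\infty}_{\rm loc}(a,b)$ and $u_0\ge\kappa>0$, the mass map $x\mapsto\int_a^x u_0$ is a bi-Lipschitz homeomorphism of each compact subinterval of $(a,b)$ onto a compact subinterval of $(0,1)$; since moreover $v_0'=-u_0'(\varphi(0,\cdot))\,v_0^3$, we get $v_0\in W^{1,\infty}_{\rm loc}(0,1)$, with Lipschitz constant on $[\delta,1-\delta]$ controlled by $\alpha_2^3\,\mathrm{Lip}(u_0\vert_{K_\delta})$ for the corresponding compact $K_\delta\subset(a,b)$ (this need not stay bounded as $\delta\to0$).

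Next I would produce $v$. The cleanest route is to re-run the proof of Theorem~\ref{thm:regv1D} with this $v_0$, checking that global Lipschitz regularity of $v_0$ is never used: Steps~1--3 and the construction of $v_{0,\epsilon}$ in Step~4 only need $\alpha_1\le v_0\le\alpha_2$ (the supersolution $V$ of Step~2 just needs $v_0$ bounded above, the subsolution $\alpha_1$ needs $v_0\ge\alpha_1$); Step~5 and Remark~\ref{rk:LB1} only use the local Lipschitz bound of $v_0$ on the support $[a',b']\subset(0,1)$ of the cut-off $\phi$, through $\|w(0)\|_\infty=\||v_{0,x}|^2\phi^2\|_\infty$, which is finite here; and Step~6 is purely interior. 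Equivalently, and more robustly, I could approximate $u_0$ by data $u_0^n\in W^{1,\infty}([a,b])$ with the same support $[a,b]$, bounded between $\kappa$ and $\|u_0\|_\infty$, equal to $u_0$ on $[a+\tfrac1n,b-\tfrac1n]$ and linear on the two remaining pieces, so that $u_0^n\to u_0$ in $L^1(\R)$; Theorem~\ref{thm:reg1D} applies to each $u_0^n$, the estimates of Remark~\ref{rk:LB1} and Step~6 for the associated $v^n$ are uniform in $n$ (they depend only on $\alpha_1\le v^n_0\le\alpha_2$ and on $\mathrm{Lip}(v^n_0\vert_{[\delta,1-\delta]})=\mathrm{Lip}(v_0\vert_{[\delta,1-\delta]})$ for $n$ large), the $L^1$-contraction of the semigroup of \cite{ACMRelat} gives $u^n\to u$ in $C([0,T];L^1(\R))$ with $u$ the entropy solution for $u_0$, and passing to a locally smooth limit $v^n\to v$ identifies $u$ with the change of variables \eqref{e:cv1} of $v$. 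Either way one obtains a solution $v$ of \eqref{RHEdvB} with $\alpha_1\le v\le M$, smooth in $(0,T)\times(0,1)$, $v\in C([0,T];L^1(0,1))$, $v(0)=v_0$, satisfying \eqref{massBC} and $\int_0^T\!\!\int_0^1|v_x|\,dx\,dt<\infty$ (Step~3).

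Finally I would apply Proposition~\ref{prop:suficient}. Its proof uses only that $v$ is bounded above and below, smooth in the interior, lies in $C([0,T];L^1(0,1))$, satisfies the mass identity \eqref{massBC} and $\int_0^T\!\!\int_0^1|v_x|<\infty$ — never a global Lipschitz bound on $v_0$ — so it applies here and yields that $u$ defined by \eqref{e:cv1} is in $C([0,T];L^1(\R))$ with $u(0)=u_0$, is the entropy solution of \eqref{RHE}, is smooth inside its support, and satisfies $u(t)\in W^{1,1}(a-t,b+t)$ and hence $u(t)\in BV(\R)$ for a.e.\ $t$; this is statement (ii). Statement (i) — $\mathrm{supp}\,u(t)=[a-t,b+t]$, $u(t)\ge\kappa(t)>0$ on $(a-t,b+t)$ and $u(t,x)=0$ otherwise — is exactly \cite{ACMRelatQ} applied to $u_0$, which is bounded below by $\kappa$ on the interior of its support (it also follows from the two-sided bounds on $v$). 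The one point requiring care is precisely this bookkeeping of the \emph{locality} of the a priori estimates: one must verify that no constant entering Steps~5--6 (or their uniform-in-$n$ versions) involves a quantity that blows up as one approaches $\partial(0,1)$ — the $L^\infty$ bounds of Step~2 are global and harmless, and the gradient estimate of Step~5 is genuinely local, so the argument goes through.
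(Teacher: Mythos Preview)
Your approach (b) --- approximate $u_0$ by globally Lipschitz data $u_0^n$ sharing the same support and two-sided bounds, apply Theorem~\ref{thm:reg1D} to each approximant, observe that the interior estimates of Steps~1--3, 5, 6 and Remark~\ref{rk:LB1} for the associated $v^n$ are uniform in $n$ because they depend only on the global $L^\infty$ bounds $\alpha_1\le v_0^n\le\alpha_2$ and on the \emph{local} Lipschitz constant of $v_0^n$ (eventually equal to that of $v_0$ on each compact), and pass to the limit using the $L^1$-contraction of entropy solutions --- is exactly the paper's argument.

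One small caveat on approach (a): as written it is not quite self-contained. Step~4 of Theorem~\ref{thm:regv1D} invokes the classical existence results of \cite{LSU,lieberman2005second} for the regularized problem \eqref{RHEdvA}--\eqref{RHEdvAB}, and the modified initial datum $v_{0,\epsilon}$ is required to lie in $W^{1,\infty}(0,1)$ globally (the modification is only near $\partial(0,1)$, so if $v_0\notin W^{1,\infty}(0,1)$ neither is $v_{0,\epsilon}$). So one cannot simply feed a merely locally Lipschitz $v_0$ into that step; a preliminary global regularization is needed --- which is precisely your route (b). Since you offer (b) as the ``more robust'' alternative, the overall proposal is correct.
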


\begin{proof}
Let $u_{0\delta} \in L^\infty(\R)$ with $u_{0\delta}(x) \geq
\kappa > 0$ for $x\in [a,b]$, $u_{0\delta}(x)=0$ for $x\not\in
[a,b]$, $u_{0\delta} \to u_0$ locally uniformly in $(a,b)$ as
$\delta\to 0^+$, and $u_{0\delta}\in W^{1,\infty}([a,b])$ with
uniform local Lipschitz bounds in $(a,b)$. Let $v_{0\delta}(\eta)$
be the functions obtained by the change of variables
(\ref{CVinici1}) (with $t=0$). Let $u_\delta(t,x)$ be the entropy
solution of (\ref{RHEg}) with $u_\delta(0)=u_{0\delta}$. By
Theorem \ref{thm:reg1D} we know that each $u_\delta(t,x)$  is
smooth inside $(a,b)$. Let us note that the local bounds on
$u_\delta$ and its derivatives do not depend on $\delta$. It
suffices to observe that this is true for the associated functions
$v_\delta(t,\eta)$ which are solutions of (\ref{RHEdv}),
(\ref{verticalBC}), with initial data
$v_\delta(0,\eta)=v_{0\delta}(\eta)$. Note that the bounds in
Steps 1, 2, 3 in the proof of Theorem \ref{thm:regv1D} are
independent of $\delta$. By Remark \ref{rk:LB1}, the Lipschitz
bound in Step 5 depends only on the local Lipschitz bounds of
$v_{0\delta}(\eta)$ and are, thus, uniform in $\delta$. Step 6
proves uniform (in $\delta$) interior bounds for any space and
time derivative of $v_{\delta}(t,\eta)$. By passing to the limit
as $\delta \to 0+$ we conclude that $u(t,x)$ is smooth inside its
support and $(i)$ and $(ii)$ in Theorem \ref{thm:reg1D} hold.
\end{proof}

We now generalize our main results to initial data vanishing at
the boundary of the support.

\begin{proposition}\label{prop:locL2}
Let $u_0\in  L^\infty(\R)$ with $u_0(x) > 0$ for $x\in (a,b)$, and
$u_0(x)=0$ for $x\not\in (a,b)$. Assume that $u_0\in
W^{1,\infty}_{\rm loc}(a,b)$ and $u_0(x) \to 0$ as $x\to a,b$. The
entropy solution $u(t,x)$ of \eqref{RHE} with $u(0)=u_0$ satisfies
$(i)$ and $(ii)$ in Theorem {\rm \ref{thm:reg1D}}. Moreover, if
$u_0(x) \leq A (b-x)^\alpha(x-a)^\alpha$ for some $A, \alpha > 0$,
then $u(t,x) \leq A(t) (b+t-x)^\alpha(x-a+t)^\alpha$ for any $x\in
(a-t,b+t)$, $t > 0$ and some $A(t)$. In that case, $u(t,x)$ is a
continuous function that tends to $0$ as $x \to a-t,b+t$.
\end{proposition}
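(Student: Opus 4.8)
The plan is to treat the qualitative claims $(i)$--$(ii)$ and the quantitative bound separately. For $(i)$--$(ii)$ I would \emph{approximate $u_0$ from below}: for small $\delta>0$ set $u_{0\delta}:=u_0\,\1_{[a+\delta,b-\delta]}$, which satisfies the hypotheses of Proposition~\ref{prop:locL1} with $[a,b]$ replaced by $[a+\delta,b-\delta]$ and $\kappa=\kappa_\delta:=\min_{[a+\delta,b-\delta]}u_0>0$. Hence the entropy solution $u_\delta$ with $u_\delta(0)=u_{0\delta}$ satisfies $(i)$--$(ii)$ of Theorem~\ref{thm:reg1D} on the shrunken support $[a+\delta-t,b-\delta+t]$; in particular $u_\delta(t)$ is smooth there. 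Since $u_{0\delta}\uparrow u_0$ in $L^1(\R)$, the well-posedness and $L^1$--stability of entropy solutions of \eqref{RHE} (\cite{ACMRelat}) give $u_\delta\to u$ in $C([0,T];L^1(\R))$ and a.e., while $\mathrm{supp}(u(t))=[a-t,b+t]$ because $u_0$ is locally bounded away from $0$ in $(a,b)$ (\cite{ACMRelatQ}).

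The core of this step is to make the interior bounds on $u_\delta$ \emph{uniform in $\delta$}. Fix $[t_1,t_2]\Subset(0,T)$ and a compact $K\Subset(a-t_1,b+t_1)$, so that $K\Subset(a-t,b+t)$ for all $t\ge t_1$. Choose $[c_1,c_2]\Subset(a,b)$ and $0<m'<\min_{[c_1,c_2]}u_0$ with $(c_1-t_1,c_2+t_1)\supset K$; by Theorem~\ref{thm:reg1D}$(i)$ the entropy solution $\beta$ with $\beta(0)=m'\1_{[c_1,c_2]}$ satisfies $\beta(t,\cdot)\ge\kappa_\beta(t)>0$ on $(c_1-t,c_2+t)$, and $\beta\le u_\delta$ (and $\beta\le u$) by comparison once $\delta$ is small, so $u_\delta(t,\cdot)\ge\kappa_\beta(t)$ on $K$ \emph{independently of $\delta$}. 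Together with $\|u_\delta(t)\|_\infty\le\|u_0\|_\infty$, this yields two-sided bounds for the associated pseudo-inverse derivative $v_\delta=\partial_\eta\varphi_\delta$ on the $\eta$-window $J'$ corresponding to $K$, a fixed compact subset of $(0,1)$ for $\delta$ small since $\varphi_\delta\to\varphi$ locally uniformly. As $u_0\in W^{1,\infty}_{\rm loc}(a,b)$ and $u_0>0$ there, $v_{0\delta}=1/(u_0\circ\varphi_\delta)$ has $\delta$-uniform local Lipschitz bounds on $J'$; Remark~\ref{rk:LB1} (the localized Step~5 of Theorem~\ref{thm:regv1D}) then produces a $\delta$-uniform local Lipschitz bound for $v_\delta$ on compacta of $J'$, and the interior estimates of \cite[Chapter V, Theorem 3.1]{LSU} bootstrap it to $\delta$-uniform bounds on all derivatives of $v_\delta$, hence of $u_\delta$ on $K\times[t_1,t_2]$. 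Letting $\delta\to0$ (Arzel\`a--Ascoli) shows $u(t)$ is smooth in $(a-t,b+t)$ and, through the barrier $\beta$, locally bounded away from $0$ there, which is $(i)$. For $(ii)$: $u(t)\in BV(\R)$ for a.e.\ $t$ is the known time-regularization of \eqref{RHE} for bounded compactly supported data (\cite{ACMregularity,CER2}); since $u(t)$ is smooth on the open interval $(a-t,b+t)$ and vanishes off $[a-t,b+t]$, the singular part of $Du(t)$ is carried by $\{a-t,b+t\}$, so $u(t)\in W^{1,1}(a-t,b+t)$.

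For the quantitative bound I would exhibit an explicit supersolution of the announced shape. Put $P(t,x):=(b+t-x)_+^\alpha(x-a+t)_+^\alpha$ and seek $\bar u(t,x)=A(t)P(t,x)$ with $A$ nondecreasing, $A(0)=A$, so that $\bar u(0,\cdot)\ge u_0$. Writing $\z[\bar u]:=\bar u\,\bar u_x/\sqrt{\bar u^2+\bar u_x^2}=A\,PP_x/\sqrt{P^2+P_x^2}$, one has in $(a-t,b+t)$ that $\bar u_t-\partial_x\z[\bar u]=A'P+A(P_t-Q)$ with $Q:=\partial_x\bigl(PP_x/\sqrt{P^2+P_x^2}\bigr)$, so $\bar u$ is a supersolution as soon as $A'/A\ge(Q-P_t)/P$. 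The point to verify is that $\sup_{(a-t,b+t)}(Q-P_t)/P=:C(t)<\infty$ with $C\in L^1_{\rm loc}(0,T)$: away from the edges $P$ is bounded below, and near an edge $|P_x|\gg P$, hence $\z[\bar u]=-AP+o(AP)$ and the leading $\ell^{-1}$--singularities of $Q/P$ and of $P_t/P$ (with $\ell$ the distance to that edge) cancel, leaving a bounded quantity. Taking $A(t)=A\exp\bigl(\int_0^t C^+(s)\,ds\bigr)$ turns $\bar u$ into a supersolution of \eqref{RHE} which is a continuous front of speed $1$ with $\bar u(0)\ge u_0$, so the comparison principle of \cite{ACMRelatQ} gives $u(t,x)\le A(t)(b+t-x)^\alpha(x-a+t)^\alpha$; then $0\le u(t,x)\le A(t)P(t,x)\to0$ as $x\to a-t,b+t$, which with the interior smoothness above and $u(t)\equiv0$ off $[a-t,b+t]$ shows $u(t)$ is continuous on $\R$ and vanishes at $a-t,b+t$.

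The main obstacle is the $\delta$-uniformity of the interior estimates in the second step: it hinges on the support identity of \cite{ACMRelatQ} (so that the barrier $\beta$ exists and controls $u_\delta$ from below on a fixed compactum) and on the change of variables $\varphi_\delta$ converging while keeping the relevant $\eta$-window inside a fixed compact subset of $(0,1)$; absent these, one only gets $\delta$-dependent bounds. Two smaller points deserve care: invoking \cite{ACMregularity,CER2} for $u(t)\in BV(\R)$ presupposes their statement covers bounded compactly supported (not necessarily $BV$) data, and the comparison principle of \cite{ACMRelatQ} must be applied to the \emph{continuous} front $\bar u$ rather than to the jump fronts constructed there.
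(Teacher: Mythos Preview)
Your overall architecture---approximate, obtain $\delta$-uniform interior bounds via the mass-variable formulation and Remark~\ref{rk:LB1}, then compare with an explicit supersolution of the form $A(t)(b+t-x)^\alpha(x-a+t)^\alpha$---matches the paper's. The execution differs in two places worth flagging.

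\medskip
\noindent\textbf{The approximation.} You truncate from \emph{below}, taking $u_{0\delta}=u_0\,\1_{[a+\delta,b-\delta]}$ and letting the support shrink. The paper instead lifts from \emph{above}, setting $u_{0\delta}=u_0+\delta$ on the \emph{fixed} interval $[a,b]$. Both choices feed into Proposition~\ref{prop:locL1}, and both need a $\delta$-uniform local lower bound on $u_\delta$; you obtain it via an explicit barrier $\beta$ while the paper simply cites the positivity result of \cite{ACMRelatQ}. Your route is a bit more self-contained. The paper's route has the bookkeeping advantage that the support, the total mass, and hence the $\eta$-interval stay fixed through the approximation, so the ``$\eta$-window $J'$ stays inside a fixed compact of $(0,1)$'' issue you correctly worry about simply does not arise. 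In your version one must track $\varphi_\delta$ on the varying mass interval $(0,\Vert u_{0\delta}\Vert_1)$ and argue that the relevant window stabilises, which is doable but adds a layer.

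\medskip
\noindent\textbf{The supersolution.} Your sketch (cancel the leading $\ell^{-1}$ singularities of $Q/P$ and $P_t/P$ near each edge, then take $A(t)=A\exp(\int_0^t C^+)$) is the right heuristic, but stops short of a proof: one still has to show the remainder after cancellation is bounded. The paper isolates this as a separate Lemma (Lemma~\ref{ss}), writes $U(t,x)=A(t)(R(t)^2-x^2)^\alpha$ with $R(t)=R_0+t$, and by an explicit computation shows that in fact
\[
\frac{Q-P_t}{P}\;\le\;0 \quad\text{on }(-R(t),R(t)),
\]
so that $A'\ge 0$ alone suffices---one may take $A(t)\equiv A$. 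This is sharper than your $C(t)<\infty$ conclusion and removes the need for an exponential correction or for $C\in L^1_{\rm loc}$.

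\medskip
Your caveats at the end are apt: the paper's approximation from above sidesteps the first, and the detailed Lemma~\ref{ss} disposes of the second by producing a supersolution with $A'\ge 0$, to which the comparison principle of \cite{ACMRelatQ} applies directly.
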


\begin{proof}
Let $u_{0\delta} \in  L^\infty(\R)$ with $u_{0\delta}(x) = u_0(x)
+ \delta$ for $x\in [a,b]$, $u_{0\delta}(x)=0$ for $x\not\in
[a,b]$, and $u_{0\delta}\in W^{1,\infty}([a,b])$ with uniform
local Lipschitz bounds in $(a,b)$. Let $v_{0\delta}(\eta)$ be the
functions obtained by the change of variables (\ref{CVinici1})
(with $t=0$). Let $u_\delta(t,x)$ be the entropy solution of
(\ref{RHEg}) with $u_\delta(0)=u_{0\delta}$. By Theorem
\ref{thm:reg1D} we know that each $u_\delta(t,x)$  is smooth
inside $(a,b)$. Let us note that the local bounds on $u_\delta$
and its derivatives do not depend on $\delta$. Again, it suffices
to observe that this is true for the associated functions
$v_\delta(t,\eta)$ which are solutions of (\ref{RHEdv}),
(\ref{verticalBC}), with initial data
$v_\delta(0,\eta)=v_{0\delta}(\eta)$.

The $L^p$ bounds follow from Step 1 in the proof of Theorem
\ref{thm:regv1D} for $p\in [1,\infty)$ and they only depend on the
$L^p$ bound of $v_{0\delta}$. Actually, we have
$$
\int_0^1 v_{0\delta}(\eta)^p\, d\eta = \int_a^b
\frac{1}{u_{0\delta}(x)^{p-1}}\, dx\,,
$$
that depends on the integrability of $\frac{1}{u_{0\delta}(x)}$ at
the boundary points. But multiplying (\ref{RHEdv}) by
$v_\delta^p\phi$ where $p\in [1,\infty)$ and $\phi$ is a positive
smooth test function with compact support in $(0,1)$ we obtain
\begin{equation*}
\frac{1}{p+1}\frac{d}{dt} \int_0^1 v_\delta^{p+1}(t,\eta)\phi\,
d\eta + \int_0^1  |(v_{\delta}^p)_\eta|\phi \leq   p \int_0^1
v_\delta^{p+1} \phi\, d\eta + \int_0^1 v_\delta^p |\phi_\eta| \,
d\eta.
\end{equation*}
Thus we derive local $L^p$ bounds for $v_\delta$ which are
independent of $\delta$. We also obtain local bounds on the total
variation of  $v_\delta^p$ which are independent of $\delta$. To
obtain a local $L^\infty$ bound independent of $\delta$ we observe
that this follows from the identity $v_\delta(t,\eta)=
\frac{1}{u_\delta(t,x)}$, where $x = \varphi_\delta(t,\eta)$ is
given by (\ref{CVinici1}), since we know that $u_\delta(t,x)$ is
locally bounded away from zero in its support \cite{ACMRelatQ}.
Thus Steps 1, 2, 3 hold in their local versions. By Remark
\ref{rk:LB1}, the Lipschitz bound in Step 5 depends only on the
uniform local bounds on $v_\delta(t,\eta)$ and on the local
Lipschitz bounds of $v_{0\delta}(\eta)$ and are, thus, uniform in
$\delta$. Step 6 proves uniform (in $\delta$) interior bounds for
any space and time derivative of $v_{\delta}(t,\eta)$. By passing
to the limit as $\delta \to 0+$ we conclude that $u(t,x)$ is
smooth inside its support and $(i)$ and $(ii)$ in Theorem
\ref{thm:reg1D} hold. The last assertion is a consequence of the
comparison principle using Lemma \ref{ss} below.
\end{proof}

\begin{remark}{\rm
Note that the last assertion implies that if the initial profile
is not vertical at the boundary at $t=0$ it remains non-vertical
for any $t > 0$. Moreover, during the proof we have observed that
if $u_0$ has a vertical profile with $\frac{1}{u_0} \in L^p(a,b)$,
then $\frac{1}{u(t,x)} \in L^p(a-t,b+t)$ for any $t > 0$. Thus in
that case $u(t,x)$ has a vertical profile at the boundary of its
support.}
\end{remark}

Due to translational invariance of \eqref{RHE}, we state our next
Lemma in an interval symmetric around zero.

\begin{lemma}\label{ss}
Let $U(t,x)= A(t) (R(t)^2-x^2)^\alpha$ where $R(t)=R_0 + t$,
$\alpha > 0$. If $A'(t)\geq 0$, then $U(t,x)$ is a supersolution
of \eqref{RHE}.
\end{lemma}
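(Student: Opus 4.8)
The plan is to verify directly that $U(t,x) = A(t)(R(t)^2 - x^2)^\alpha$ makes the differential inequality
$$
U_t \geq \left( \frac{U U_x}{\sqrt{U^2 + U_x^2}} \right)_x
$$
hold pointwise on its support $\{|x| < R(t)\}$, which is exactly the statement that $U$ is a (classical, hence entropy) supersolution of \eqref{RHE}. First I would compute the relevant derivatives. Writing $\rho = \rho(t,x) = R(t)^2 - x^2 > 0$, we have $U = A\rho^\alpha$, $U_x = -2\alpha A x \rho^{\alpha-1}$, and $U_t = A'\rho^\alpha + 2\alpha A R R' \rho^{\alpha-1} = A'\rho^\alpha + 2\alpha A R \rho^{\alpha-1}$ since $R' = 1$. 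The key structural observation is that the flux $\a(U,U_x) := \frac{U U_x}{\sqrt{U^2+U_x^2}}$ is bounded in absolute value by $U$ (indeed by $|U_x|$ too), so intuitively the right-hand side cannot grow faster than a transport term at speed one, which is precisely what $R(t) = R_0 + t$ is designed to dominate; the term $A'(t)\geq 0$ then provides the remaining slack.

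The main computation is to bound the right-hand side $\big(\a(U,U_x)\big)_x$ from above. I would write $\a(U,U_x) = \frac{U_x}{\sqrt{1 + (U_x/U)^2}} \cdot \mathrm{sgn}\,(\text{stuff})$ — more cleanly, set $g = U_x/U = -2\alpha x/\rho$, so that $\a(U,U_x) = U \cdot \frac{g}{\sqrt{1+g^2}}$. Then
$$
\big(\a(U,U_x)\big)_x = U_x \frac{g}{\sqrt{1+g^2}} + U \frac{g_x}{(1+g^2)^{3/2}}.
$$
The first term equals $\frac{U_x^2}{U\sqrt{1+g^2}} \leq \frac{U_x^2}{U} = U g^2$; one checks $U g^2 = 4\alpha^2 A x^2 \rho^{\alpha - 2}$, which near the boundary $|x|\to R$ behaves like $\rho^{\alpha-2}$ and is therefore potentially the dangerous term. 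For the second term, $g_x = -2\alpha \frac{\rho + x\cdot(-2x)\cdot(-1)\cdot\rho^{-1}\cdot(-1)}{\rho^2}$ — being careful, $g_x = -2\alpha\big(\rho^{-1} + x\cdot 2x\,\rho^{-2}\big) = -2\alpha\,\frac{\rho + 2x^2}{\rho^2} = -2\alpha\,\frac{R^2 + x^2}{\rho^2}$, so $U g_x = -2\alpha A (R^2+x^2)\rho^{\alpha-2}$, which is negative; since $(1+g^2)^{-3/2}\le 1$ this term only helps when $\alpha \le 2$ but must be tracked carefully when $\alpha > 2$ because $(1+g^2)^{-3/2}$ decays. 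I would estimate $\frac{1}{1+g^2} = \frac{\rho^2}{\rho^2 + 4\alpha^2 x^2}$ and combine the two terms over the common denominator $(\rho^2 + 4\alpha^2 x^2)^{3/2}$; after simplification the right-hand side should be bounded above by a constant (depending on $\alpha$) times $A\,\rho^{\alpha - 2}\cdot(\text{a polynomial in } x, R \text{ of degree } 2)$, and crucially the worst boundary singularity is $\rho^{\alpha-1}$, not $\rho^{\alpha-2}$, because the genuinely singular $\rho^{\alpha-2}$ contributions must cancel or be absorbed. I expect this cancellation — or the fact that the surviving $\rho^{\alpha-2}$ term has a favorable sign — to be the main obstacle, and it is where the flux-limited structure (the $\sqrt{1+g^2}$ denominators) is essential: a pure porous-medium-type flux would not give it.

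Once the bound $\big(\a(U,U_x)\big)_x \leq 2\alpha A R\,\rho^{\alpha - 1} + (\text{lower-order in } \rho^{-1})$ is established on $\{|x|<R(t)\}$, I would compare with $U_t = A'\rho^\alpha + 2\alpha A R\,\rho^{\alpha-1}$: the transport-type terms $2\alpha A R\rho^{\alpha-1}$ match exactly, and since $A'(t)\geq 0$ the term $A'\rho^\alpha \geq 0$ dominates whatever non-negative lower-order remainder is left, possibly after enlarging $A(t)$ by solving an ODE of the form $A' \geq C(\alpha) A\cdot(\text{bounded})$ — but the statement as given only asks for existence of such $A$ with $A'\ge0$, so it suffices to note the inequality holds for, e.g., $A$ growing exponentially. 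Finally, I would remark that a smooth function satisfying the pointwise differential inequality on the open set where it is positive, and vanishing outside, is a supersolution in the entropy sense by the comparison results recalled from \cite{ACMRelatQ} (the profile of $U$ at $\partial\{U>0\}$ is vertical when $\alpha<1$ and non-vertical when $\alpha\ge1$, and in either case it is compatible with being an entropy supersolution), which completes the proof.
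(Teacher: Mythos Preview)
Your decomposition is essentially the paper's: writing $g = U_x/U = -2\alpha x/\rho$ and $Q = (\rho^2 + 4\alpha^2 x^2)^{1/2}$, the flux derivative splits exactly as
\[
\big(\a(U,U_x)\big)_x \;=\; \underbrace{\frac{U g^2}{\sqrt{1+g^2}}}_{=\,\frac{4A\alpha^2 x^2 \rho^{\alpha-1}}{Q}} \;+\; \underbrace{\frac{U g_x}{(1+g^2)^{3/2}}}_{=\,-\,\frac{2A\alpha (R^2+x^2)\rho^{\alpha+1}}{Q^3}\,\le\,0}.
\]
There is no $\rho^{\alpha-2}$ singularity once you keep the $\sqrt{1+g^2}$ (your crude bound $Ug^2$ is what produced it, and you rightly abandoned it). The paper then finishes in one line: since $Q \ge 2\alpha|x|$ and $|x|\le R$, the first term satisfies
\[
\frac{4A\alpha^2 x^2 \rho^{\alpha-1}}{Q} \;\le\; 2A\alpha|x|\,\rho^{\alpha-1} \;\le\; 2A\alpha R\,\rho^{\alpha-1},
\]
which is absorbed \emph{entirely} by the transport part of $U_t$. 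The second term is non-positive. Hence $U_t - (\a(U,U_x))_x \ge A'\rho^\alpha \ge 0$ with no remainder to control.

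The genuine gap in your plan is that you never isolate this elementary inequality, and instead hedge that a leftover positive term might force $A' \ge C(\alpha)A$. This leads you to misread the lemma: the statement is \emph{not} ``there exists $A$ with $A'\ge 0$'' --- it asserts that \emph{every} non-decreasing $A$ yields a supersolution. Your fallback (``take $A$ growing exponentially'') would not prove the lemma as stated, and the application in Proposition~\ref{prop:locL2} needs the freedom to prescribe $A(0)$ from the initial datum and then take, say, $A(t)\equiv A(0)$. So the missing ingredient is precisely the observation that the positive part of the flux derivative is already dominated by $2A\alpha R\rho^{\alpha-1}$ via $Q\ge 2\alpha|x|$; once you see that, there is nothing left for $A'$ to do beyond being non-negative.
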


\begin{proof}
Computing the derivatives, we get
$$
U_t = A' (R(t)^2-x^2)^\alpha + 2A \alpha R(R^2 -x^2)^{\alpha-1} ,
$$
$$
U_x = -2A \alpha x(R^2 -x^2)^{\alpha-1}\,,
$$
and
$$
(U^2+U_x^2)^{1/2} = A(R^2 -x^2)^{\alpha-1}  Q(x)\,,
$$
where $Q(x) = \left( (R^2 -x^2)^2 + 4 \alpha^2 x^2\right)^{1/2}$,
and then
$$
\frac{UU_x}{(U^2+U_x^2)^{1/2}} = -\frac{2A\alpha x (R^2-x^2)^\alpha}{Q}.
$$
Thus, the claim
\begin{equation*}
U_t \geq \, \left( \frac{ U U_x}{\sqrt{U^2 + (U_x)^2}} \right)_x
\end{equation*}
holds if and only if
\begin{align*}
A' (R^2-x^2)^\alpha + 2A \alpha R(R^2 -x^2)^{\alpha-1} \geq & \,-
\frac{2A\alpha (R^2-x^2)^\alpha}{Q}  + \frac{2A\alpha x
(R^2-x^2)^\alpha Q_x}{Q^2} \\ & \,+ \frac{4A\alpha^2 x^2
(R^2-x^2)^{\alpha-1 }}{Q}\,.
\end{align*}
Let us prove that
\begin{equation*}
2A \alpha R(R^2 -x^2)^{\alpha-1}  \geq \frac{4A\alpha^2 x^2 (R^2-x^2)^{\alpha-1 }}{Q}.
\end{equation*}
Indeed, the above inequality is implied by $2R\geq 4 \alpha x^2
/Q$ and $ 4 \alpha  x^2 \leq (2R) 2 \alpha  |x|  \leq 2R Q$. Now,
we choose $A$ such that
$$
A' (R^2-x^2)^\alpha \geq - \frac{2A\alpha (R^2-x^2)^\alpha}{Q}  +
\frac{2A\alpha x (R^2-x^2)^\alpha Q_x}{Q^2}\,,
$$
that is,
\begin{equation}\label{Agrows}
A'   \geq - \frac{2A\alpha  }{Q}  + \frac{2A\alpha x  Q_x}{Q^2} =
\frac{2A\alpha  }{Q} \left(-1 +  \frac{xQ_x}{Q}\right)\,.
\end{equation}
Noticing that
$$
 \frac{ x Q_x}{Q } = \frac{4\alpha^2 x^2 - 2x^2(R^2 -x^2)}{Q^2}   \leq \frac{4\alpha^2 x^2 }{Q^2}\leq
 1\,,
$$
hence (\ref{Agrows}) holds if $A'\geq 0$. We have proved that if
$A'\geq 0$, then $U(t,x)$ is a supersolution of (\ref{RHE}).
\end{proof}


\section{Numerical experiments and heuristics}
\label{sect:numerics}

In this section, we will propose a numerical scheme for more
general equations than the RHE \eqref{RHEg}. We deal with the
Cauchy problem for the generic porous media relativistic heat
equation (RHEm) \cite{casellespm} given by
\begin{equation}\label{pmrel}
u_ t = \, \left( \frac{ u^m u_x}{\sqrt{u^2 +
(u_x)^2}} \right)_x
\end{equation}
with initial data $u_0$ a probability density with compact
support. In order to propose the numerical scheme, we make use of
the change of variables to Lagrangian coordinates. As in the
introduction, let us denote by $F$ the distribution function
associated to the probability density $u$ and $\varphi(t,\eta)$
its inverse or generalized inverse, defined by
\begin{equation}\label{defphi}
  \varphi(t,\eta):=\left\{\begin{array}{lc}-\infty & \eta=-\frac{1}{2} \\[2mm] \inf\{x:\ F(t,x) >\eta+\tfrac12\}\,,& \qquad   \eta\in
  (-\tfrac{1}{2},\tfrac{1}{2})\\[2mm] +\infty & \eta=\frac{1}{2} .\end{array}\right.
\end{equation}
Here, we have preferred to shift the mass variable to the interval
$(-\frac{1}{2},\frac{1}{2})$ to simplify the notations about
boundary conditions. In this way, we simply have the relation
\begin{equation}\label{changeofvariables}
F(t,\varphi(t,\eta)) =\eta, \qquad   \eta\in
(-\tfrac{1}{2},\tfrac{1}{2}).
\end{equation}
For simplicity, most of the numerical tests have been chosen for
even initial data. Observe that this change of variables is a weak
diffeomorphism in case of connected compactly supported smooth
$u$, say on the interval $(-A(t),A(t))$ in which case
\begin{equation}\label{limboundary}
\lim_{\eta\to\pm\frac{1}{2}^\mp} \varphi(t,\eta)=\pm
A(t).
\end{equation}
Straightforward computations show that the equation satisfied by
$\varphi$ in $(-\frac{1}{2},\frac{1}{2})$ is
\begin{equation}\label{pmRHEd}
\varphi_t =   -\frac{\left(\frac{1}{\varphi_\eta}\right)^{m-1}\left(\frac{1}{\varphi_\eta}\right)_\eta}{\sqrt{1 +
\left(\frac{1}{\varphi_\eta}\right)_\eta^2}}\,,
\end{equation}
while at the boundary, formally, by \eqref{defphi} and
\eqref{limboundary}, we have to impose
\begin{equation}\label{pmeboundary3}
\varphi_\eta\left(t,\tfrac12\right)=+\infty.
\end{equation}
Moreover, thanks to the vertical contact angle property (see
\eqref{BCmm1} for the RHE and \cite{casellespm} for the RHEm), we
have that
\begin{equation}\label{pmeboundary}
\lim_{\eta\to\pm\frac{1}{2}^{\mp}}\left(\frac{1}{\varphi_\eta}\right)_\eta(t,\eta)=\mp\infty.
\end{equation}

The purpose of this section is two-fold. On one hand, we
heuristically observe some qualitative properties from the
Lagrangian viewpoint. On the other hand, these properties are
confirmed by numerical experiments with the use of an adaptation
of the algorithm proposed in \cite{CarrilloMoll} for general
equations in continuity form for the 2-dimensional case.

\subsection{Numerical Method}

Equations \eqref{RHE} and \eqref{pmrel} have been numerically
treated in \cite{Marq,SM} using the connection between nonlinear
diffusions and Hamilton-Jacobi equations and numerical methods for
conservation laws and in \cite{ACMSV} using an appropiate WENO
scheme. Here, we propose a completely different approach based on
the optimal transportation viewpoint. As we already mentioned in
the introduction an explicit Euler discretization of the equation
satisfied by the generalized inverse \eqref{pmRHEd} coincides with
the variational scheme introduced in \cite{JKO,Otto01}. Moreover,
the theoretical result proven in \cite{MP} shows that this scheme
applied to \eqref{RHE} is convergent for initial data compactly
supported smooth in their support and bounded below and above.
Therefore, we plan to use a similar algorithm for Eq.
\eqref{pmrel}. This Lagrangian formulation in 1D for nonlocal and
nonlinear diffusion problems was numerically analysed in
\cite{GT1,BCC}. These Lagrangian coordinates ideas were
generalized to several dimensions in \cite{CarrilloMoll}.

The advantages of this method are the adaptation of the mesh to
the mass distribution of the solution in an automatic way, the
immediate positivity of the solutions, and the decay of the
natural Liapunov functional of the equations. We refer to
\cite{CarrilloMoll} for more details and discussions on these
issues.

Here, we propose an adaptation of the algorithm in
\cite{CarrilloMoll}. First of all, the discretization in the mass
variable has been treated by finite difference approximations of
the derivatives of the unknown $\varphi$. We consider a partition
$\{\eta_i\}_{i=1:N}$ of the spatial interval
$[-\frac{1}{2},\frac{1}{2}]$ and we let
$\Delta_i:=\eta_{i+1}-\eta_i$. Note that, due to \eqref{defphi},
first derivatives at the points corresponding to the nodes
$\eta_2$ and $\eta_{N-1}$ have to be taken from the inside of the
domain. In order to avoid higher errors in the approximation of
the derivative at the boundaries, we decide to approximate
$\varphi_\eta$ as
$$
\varphi_\eta(\eta_i):=\left\{\begin{array}{cc}\displaystyle\frac{\varphi(\eta_{i+1})-\varphi(\eta_i)}{\Delta_i}
& {\rm if \ } \eta_i\leq \eta(t) \\[4mm]
\displaystyle\frac{\varphi(\eta_{i})-\varphi(\eta_{i-1})}{\Delta_{i-1}}
& {\rm if \ } \eta_i > \eta(t)\end{array}\right.
$$
with $\eta(t)$ to be specified. The derivative of the term
$\frac{1}{\varphi_\eta}$ is computed in the other direction for
better stability properties of the approximation of
$\left(({\varphi_\eta})^{-1}\right)_\eta$. At the boundary we just
impose  \eqref{pmeboundary3}.

As explained in \cite{CarrilloMoll}, the point $\eta(t)$ has to be
taken as the global maximum for $u$, which can be tracked at any
time step. In all examples computed, initial data are taken to be
radially symmetric and decreasing from the point $x=\eta=0$. In
all of them, the global maximum stays at $x=\eta=0$. Therefore, we
choose to take an even number of points $N$ in the discretization
and to take a symmetric partition $\{\eta_i\}_{i=1:N}$ of the
spatial interval $[-\frac{1}{2},\frac{1}{2}]$. Let us point out
that the spatial partition is never uniform since the change to
Lagrangian coordinates produces the accumulation of nodes near the
global maximum. We instead want to follow some particular features
of these type of equations such as propagation of fronts with a
vertical contact angle or formation of singularities. Therefore,
the partitions will be chosen accordingly in order to accumulate
more points around the points $\pm\frac{1}{2}$ and other points of
interest. The time derivative is evaluated through a simple
explicit Euler scheme with the CFL condition proposed in
\cite{CarrilloMoll}; i.e:
$$
\left\|\left(\frac{1}{\varphi_\eta}\right)^m\right\|_\infty\frac{\Delta
t}{(\Delta \eta)^2}\leq \frac{1}{\alpha_{CFL}}\,,
$$
with $\alpha_{CFL}>2$, for the porous-medium equation which is the
large-time limit behaviour of \eqref{pmrel}, see \cite{casellespm}
and subsection 5.3. All our simulations are done with
$\alpha_{CFL}=8$. Although the CFL analysis in \cite{CarrilloMoll}
applies only to equations written in variational form that
includes \eqref{pmrel} only for $m=1$, all numerical tests seem
not to be affected by the chosen CFL condition. Finally, we point
out that $u(t,\varphi(t,\eta_1))=u(t,\varphi(t,\eta_N))=0$.
Because of this fact, in all the plots which follow, the first and
last nodes are never plotted.

\subsection{Formation of discontinuities}

\subsubsection{Propagation of the support of solutions and waiting time phenomenon}

Observe that Eq. \eqref{pmRHEd} and \eqref{pmeboundary} imply that
the speed of propagation of the support is exactly
\begin{equation}\label{speedsupport}
\varphi_t(\pm\tfrac{1}{2}^\mp)=\pm\left(\frac{1}{\varphi_\eta(\pm\tfrac{1}{2}^\mp)}\right)^{m-1}=\pm
u^{m-1}(\pm A(t))\,.
\end{equation}
(here and from now on $f(a^\pm):=\lim_{x\to a^{\pm}}f(x)$ for a
generic function $f$ and point $a$). This coincides with well-known
results in \cite{CER2}. If we let
$0\leq\psi(\eta)=\frac{1}{\varphi_\eta}(\eta)=u(\varphi(\eta))$,
then \eqref{pmRHEd} transforms into:
\begin{equation}\label{eqlagrangian}
  \psi_t=\psi^2\left(\frac{\psi^{m-1}\psi_\eta}{\sqrt{1+\psi_\eta^2}}\right)_\eta.
\end{equation}
Note that
\begin{equation}\label{pmeboundary2}
\psi_\eta(t,\eta)=\left(\frac{1}{\varphi_\eta}\right)_\eta(t,\eta) =
\left(\frac{u_x}{u}\right)(t,\varphi(t,\eta))\,.
\end{equation}
In case $u(\pm A(t))\neq 0$ or $u_x(\pm A(t))\neq 0$ if $u(\pm
A(t))=0$, then the boundary condition for $\psi$ is just a
vertical contact angle using
\eqref{pmeboundary}-\eqref{pmeboundary2}:
\begin{equation}\label{vca}
\psi_\eta(t,\pm \tfrac{1}{2}^\mp)=\mp \infty\,.
\end{equation}

\begin{figure}[htb]
\includegraphics[width=8.5cm]{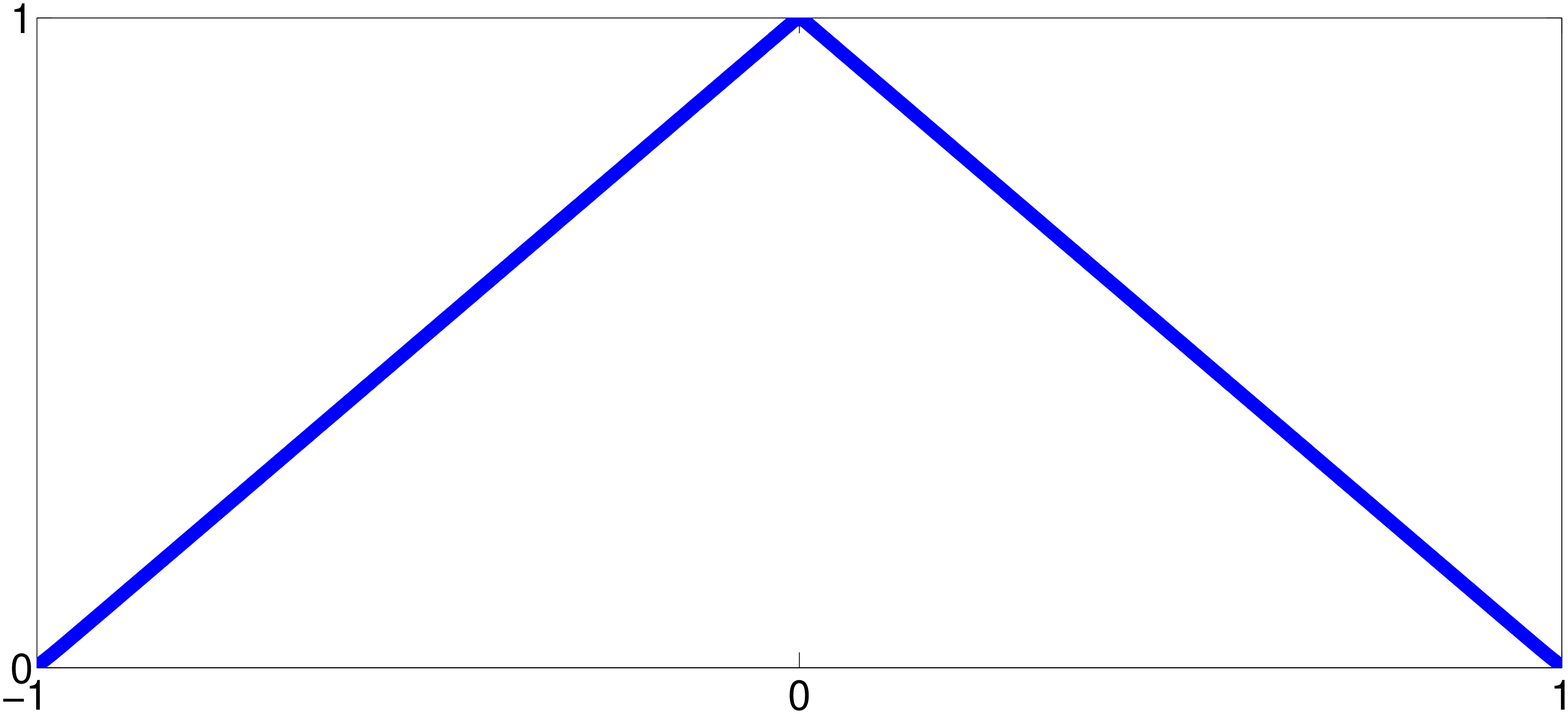}
\includegraphics[width=8.5cm]{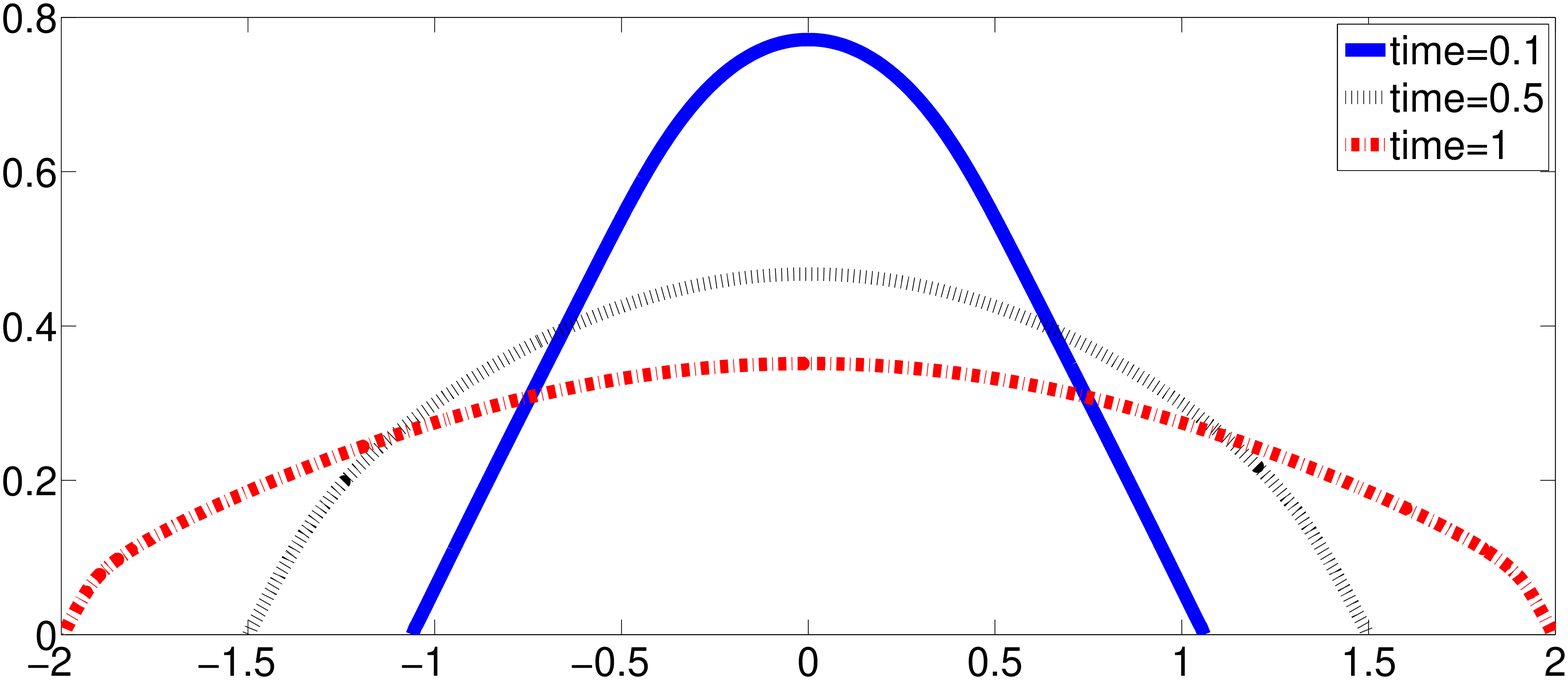}
\caption{Left: initial datum. Right: Evolution of $u_0$ in case
$m=1$ at different times.} \label{fig.triangle1}
\end{figure}

Consider now $m=1$. By \eqref{RHEd}, $|\varphi_t|\leq 1$ and
$\varphi_t(\pm\frac{1}{2}^\mp)=\pm 1$, it follows that
$(\varphi_\eta)_t(\pm\frac{1}{2})\geq 0$. This implies that
$\psi_t(\pm\frac{1}{2}^\mp)\leq 0$ by definition of
$\psi(t,\eta)$. In particular, this shows that in case
$\psi(t_0,\pm\frac{1}{2}^\mp)=0$, this condition remains true for
all time as shown in Proposition \ref{prop:locL2}.

We define next
$w(t,\eta):=\psi(t,\eta)\psi_\eta(t,\eta)=u_x(t,\varphi(t,\eta))$. The analysis above also shows that, in case $\psi(t_0,\pm\frac{1}{2}^\mp)=0$, then $|w(t,\pm\frac{1}{2})|\leq |w(0,\pm\frac{1}{2})|$.
On the other hand, in the bulk, $w$ verifies the following equation
$$
w_t=\frac{\psi^5
w_{\eta\eta}}{(\psi^2+w^2)^{\frac{3}{2}}}+\frac{3\psi
w}{(\psi^2+w^2)^{\frac{5}{2}}}(2w^2
w_\eta\psi^2-w_\eta^2\psi^4-w^4).
$$
Thus, if $w_0$ is initially bounded, $w$ remains bounded in
$[-\frac{1}{2},\frac{1}{2}]$ as proved in Section \ref{sect:Extensions}.
Observe that at a point $\eta_0$ of maximum of $w$, we have
$$
w_t(\eta_0)\leq -\frac{3\psi
w^5}{(\psi^2+w^2)^{\frac{5}{2}}}(\eta_0)\leq 0,
$$
implying the claim.

We show a numerical experiment with $u_0(x)=(1-|x|)_+$ as initial
datum which does not satisfy the conditions of Theorem
\ref{thm:reg1D}. We take $N=1000$ for the simulations. We point
out that since the initial datum is $0$ at the extremes of the
support, we need a lot of nodes in the discretization near them
since due to the change of variables \eqref{changeofvariables},
then $\varphi_\eta(\pm\frac{1}{2}^\mp)=\mp\infty$ and we want the
numerical scheme to be able to capture this feature. We report in
Fig. \ref{fig.triangle1} the precise evolution of the support
showing the smoothing effect at $x=0$, the boundedness of the
derivative all over the support including the boundaries, and the
expansion of the boundary at precise unit speed as expected by the
theory in Theorem \ref{thm:reg1D} and the heuristic arguments
above.

\begin{figure}[ht]
\includegraphics[width=8.5cm]{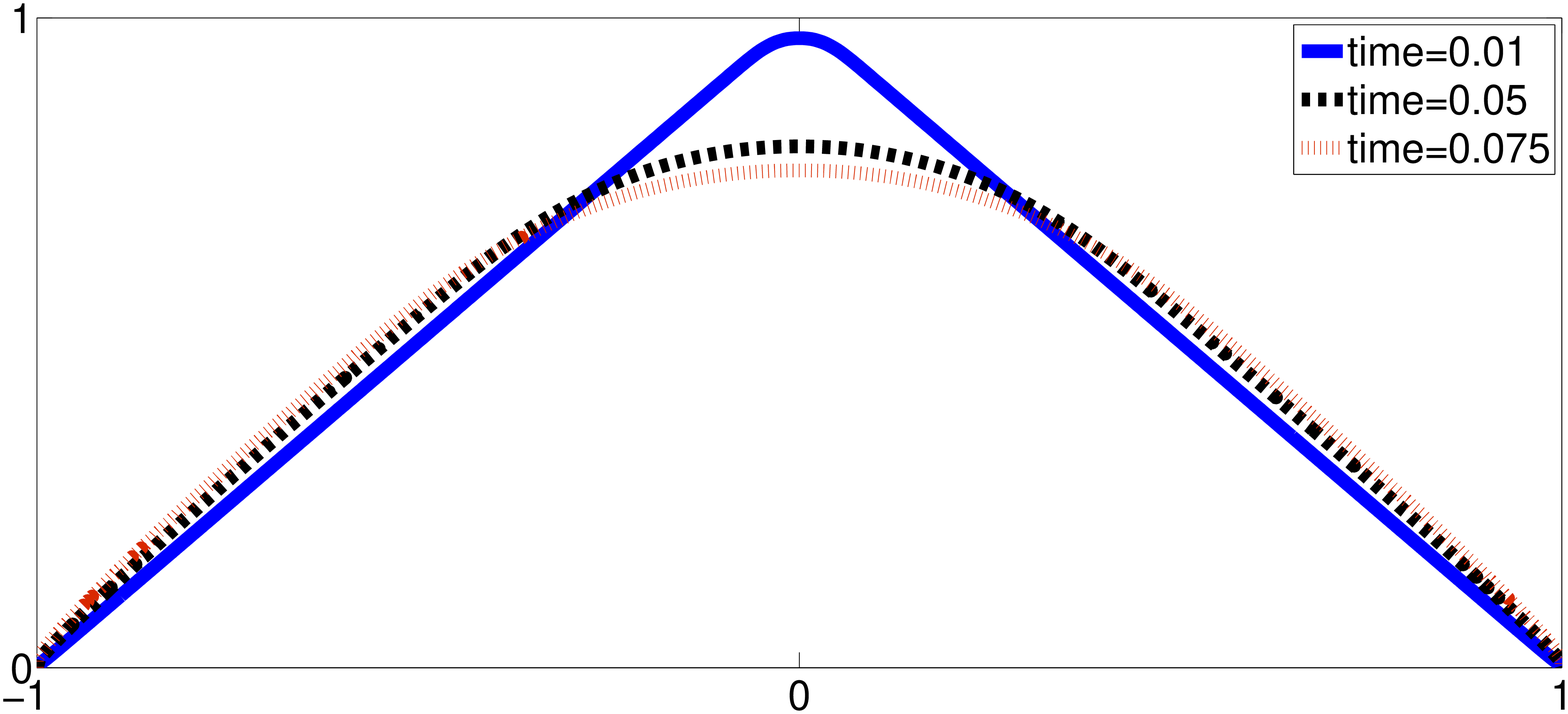}
\includegraphics[width=8.5cm]{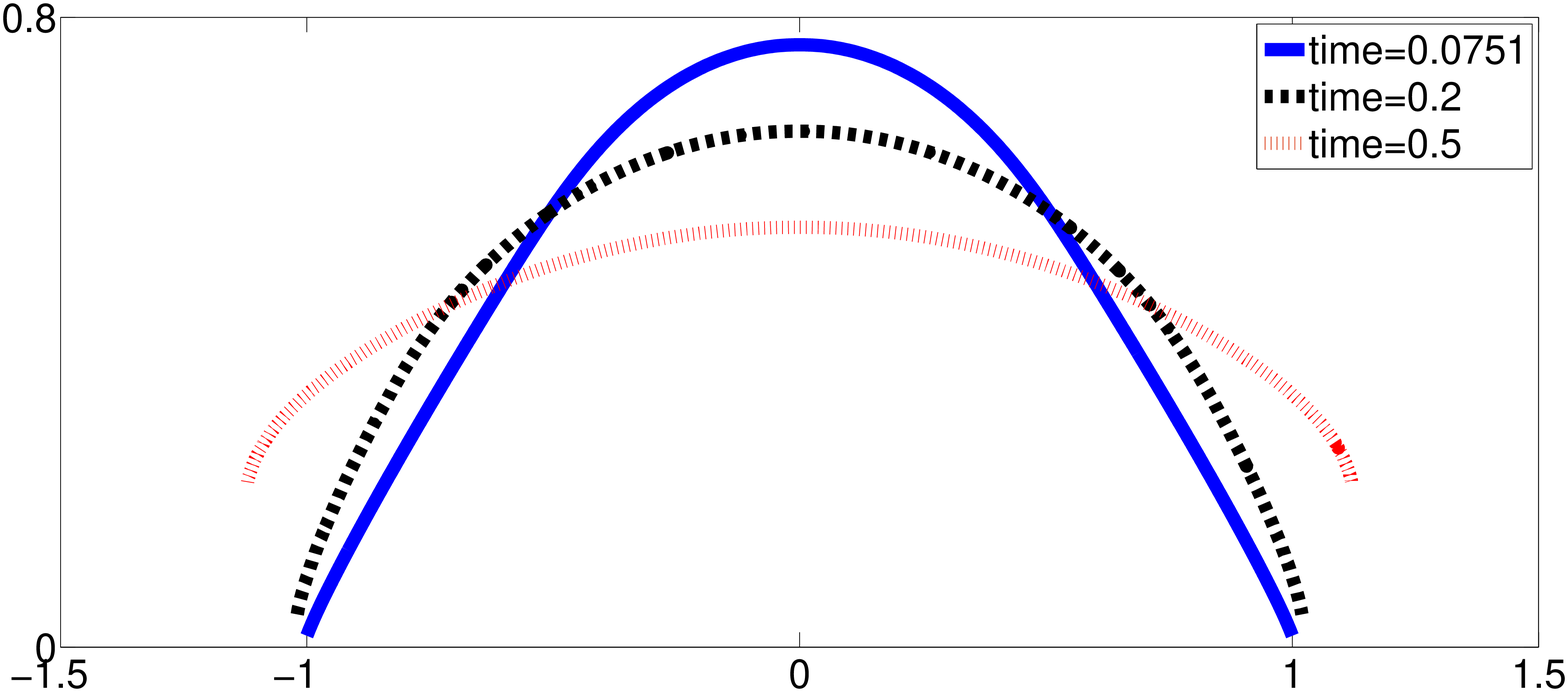}
\caption{Evolution of $u_0$ in case $m=1.5$ at different times.
Left: Before the discontinuity at the tip of the support appears.
Right: Evolution of the discontinuity front after.}
\label{fig.triangle1_5}
\end{figure}

Let us now take $m>1$. In case $u(\pm A(0)^\mp)=0$ (i.e.
$\psi(0,\pm\frac{1}{2}^\mp)=0$), then \eqref{speedsupport} implies
that the support of the solution does not move at all whenever
$u(\pm A(t))=0$. The solution will become positive at the tip of
the support $u(\pm A(t))>0$ with $t>t_0>0$ if and only if
$\psi_t(t_0,\pm\frac{1}{2}^\mp)\in (0,+\infty]$ with $u(\pm
A(t_0)^\mp)=\psi(t_0,\pm\frac{1}{2}^\mp)=0$. In case $u_x(\pm A(t_0))\neq
0$, we can use \eqref{vca} to approximate terms
$(1+\psi_\eta^2)^{1/2}\simeq \psi_\eta$ around $\pm \frac{1}{2}$
in the expression of \eqref{eqlagrangian} to get
$$
\psi_t(t_0,\pm \tfrac{1}{2})=\lim_{\eta\to \pm\frac{1}{2}}
\psi_t(t_0,\eta) = \lim_{\eta\to \pm\frac{1}{2}}
(m-1)(\psi^m\psi_\eta) (t_0,\eta).
$$
As a consequence, $\psi_t(t_0,\pm \tfrac{1}{2}^\mp)>0$ if and only if
\begin{equation}\label{conds}
\lim_{\eta\to \pm\frac{1}{2}} (\psi^{m+1})_\eta (t_0,\eta)>0\,.
\end{equation}
Observe that this condition in \eqref{conds} is implied by
$$
\lim_{x\to \pm A(t)} (u^{m+1}(x))_x (t_0,\pm A(t))>0\,.
$$
In such case, the solution becomes positive at $\pm A(t)$ and
then, according to \eqref{speedsupport}, its support starts to
increase. We note that this waiting time phenomenon is similar to
that of the classical porous medium equation but the condition for
the support to start moving is completely different to the one
obtained in \cite{acaka}. Supposing a potential growth of $\psi$,
i.e. $\psi(t_0,\eta)\simeq
C\left(t_0,\frac{1}{2}-|\eta|\right)^p$, $p>0$, for
$\eta\to\pm\frac{1}{2}^\mp$, then we obtain that
$\psi_t(t_0\pm\frac{1}{2}^\mp)=+\infty$ if and only if
$p<\frac{1}{m+1}$.

\begin{figure}[ht]
\includegraphics[width=8.5cm]{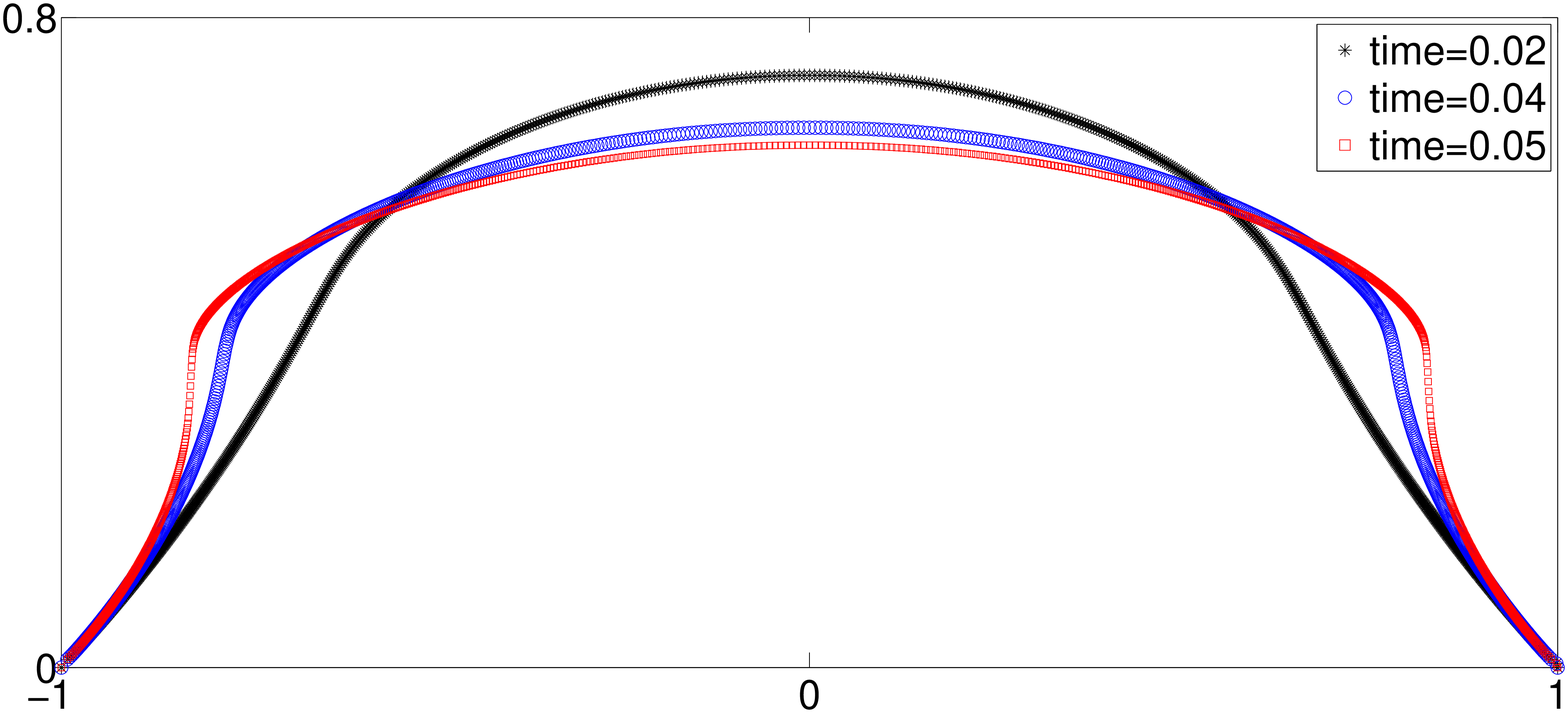}
\includegraphics[width=8.5cm]{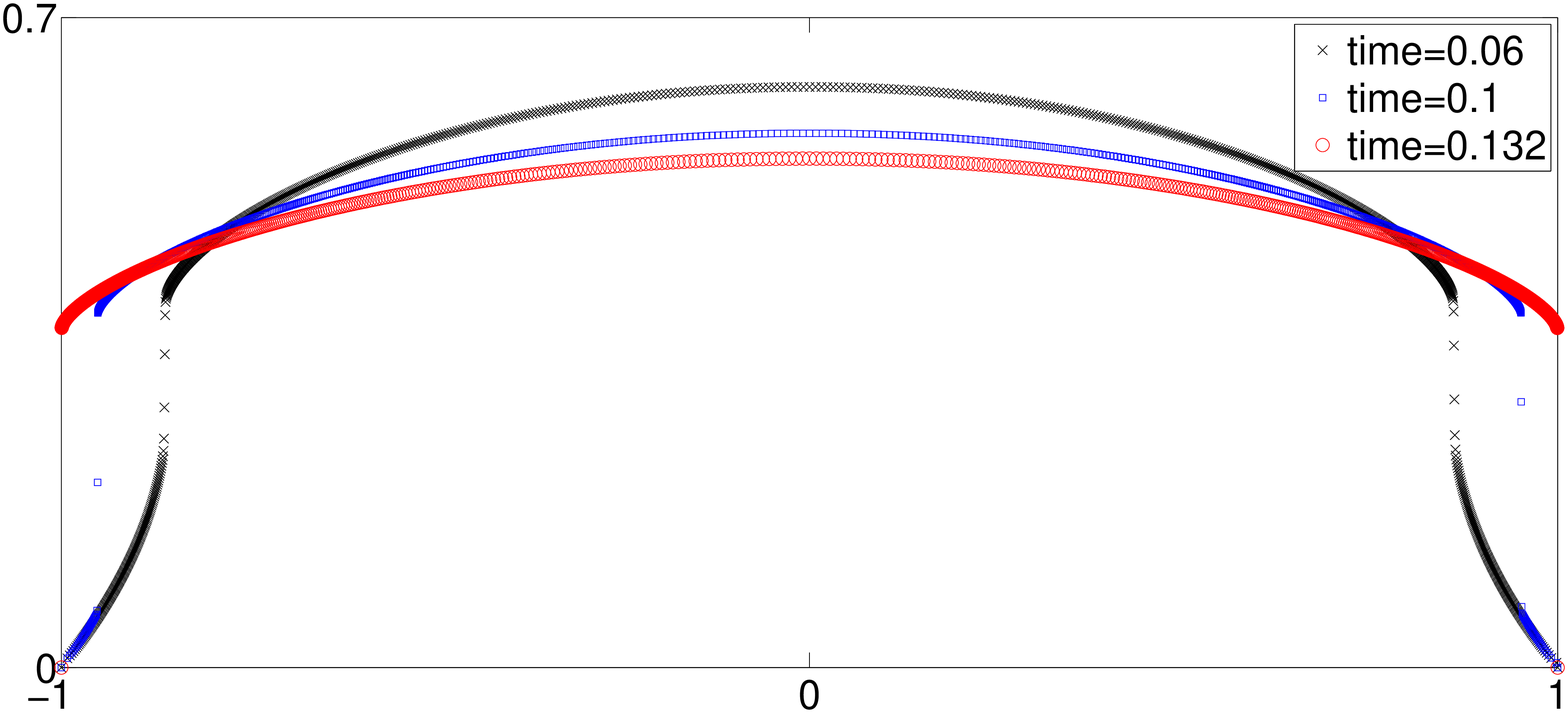}
\begin{center}{\includegraphics[width=8.5cm]{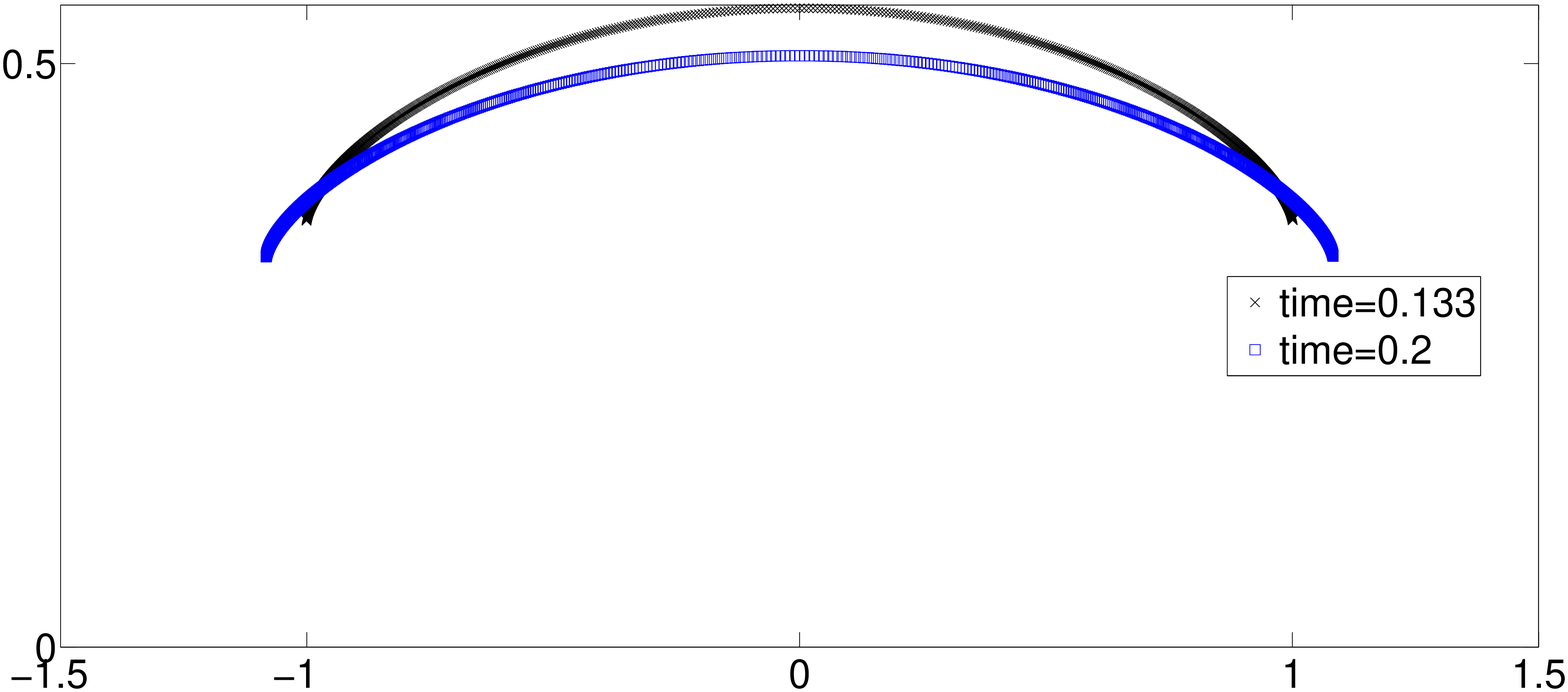}}\end{center}
\caption{Evolution of $u_0$ in case $m=3$ at different times. Top
left: Before a discontinuity on the bulk appears. Top right: After
the discontinuity front forms till it reaches the tip of the
support. Bottom: After the discontinuity front starts to move.}
\label{fig.triangle3}
\end{figure}

We point out that this behavior has already been numerically
obtained in \cite{ACMSV}. In Fig. \ref{fig.triangle1_5}, we show
this waiting time phenomenon for $m=1.5$. One can observe that
initially the support does not move since the behavior near the
boundary is $\psi(0,\eta)\simeq
C\left(0,\frac{1}{2}-|\eta|\right)^\frac12$, then the derivative
at the boundary builds up until the behavior at the boundary
reaches the critical value producing the lift-off of the boundary
point. More interesting is the case $m=3$ which we show in Fig.
\ref{fig.triangle3}. There, a discontinuity in the bulk appears
before the support starts to move.

\subsubsection{Formation of discontinuities in the bulk}

In view of the first example in the last section, one may think
that discontinuities may appear only as a consequence of the
waiting time phenomenon; i.e. particles tend to dissipate but
their support does not move, which may create the discontinuities.
In this section we heuristically study that it is possible to
create discontinuities inside the bulk even if the solution are
far away from zero as seen in Fig. \ref{fig.triangle3}.

First we treat the case $m=1$. In case of an upwards jump
discontinuity or a vertical angle at a point $\eta_0\in
]-\frac{1}{2},\frac{1}{2}[$ such that
$\psi_\eta(\eta_0)^\pm=+\infty$ , then we also have
$\varphi_t(\eta_0)=-1$. Since $|\varphi_t|\leq 1$, then
$\varphi_t(\eta_0)=-1$ implies that $\varphi_t$ is nonincreasing
to the left and nondecreasing to the right of $\eta_0$, i.e.,
$((\varphi_\eta)^-)_t\leq 0$ and $((\varphi_\eta)^+)_t\geq 0$.
This shows that $(\psi(\eta_0)^-)_t\geq 0$ while
$(\psi(\eta_0)^+)_t\leq 0$, which implies that the size of the
discontinuity reduces in for an upwards jump discontinuity or that
no discontinuity is created if initially there is a vertical angle.

This last phenomenon is not true if $m>1$ in the case of a
vertical angle at a point $\eta_0\in ]-\frac{1}{2},\frac{1}{2}[$
such that $\psi_\eta(\eta_0)^\pm=+\infty$. From the equation
\eqref{eqlagrangian} for $\psi$ as in previous subsection, we
deduce that $ \psi_t(\eta_0)=(m-1)\psi^m\psi_\eta(\eta_0)$, and
thus, a discontinuity is created. Once we have a discontinuity at
$\eta_0$ the evolution is theoretically unknown.

\begin{figure}[ht]
  \includegraphics[width=8.5cm]{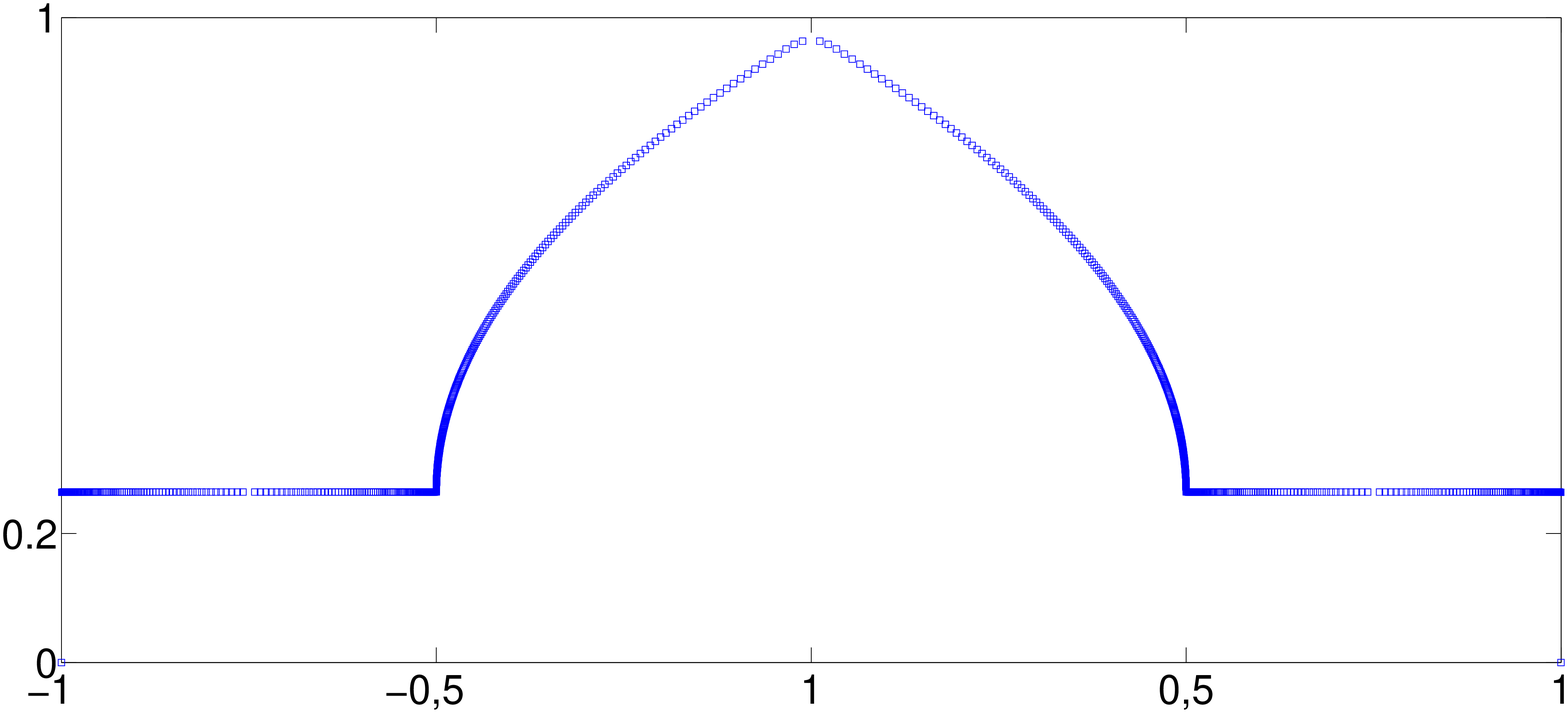}\includegraphics[width=8.5cm]{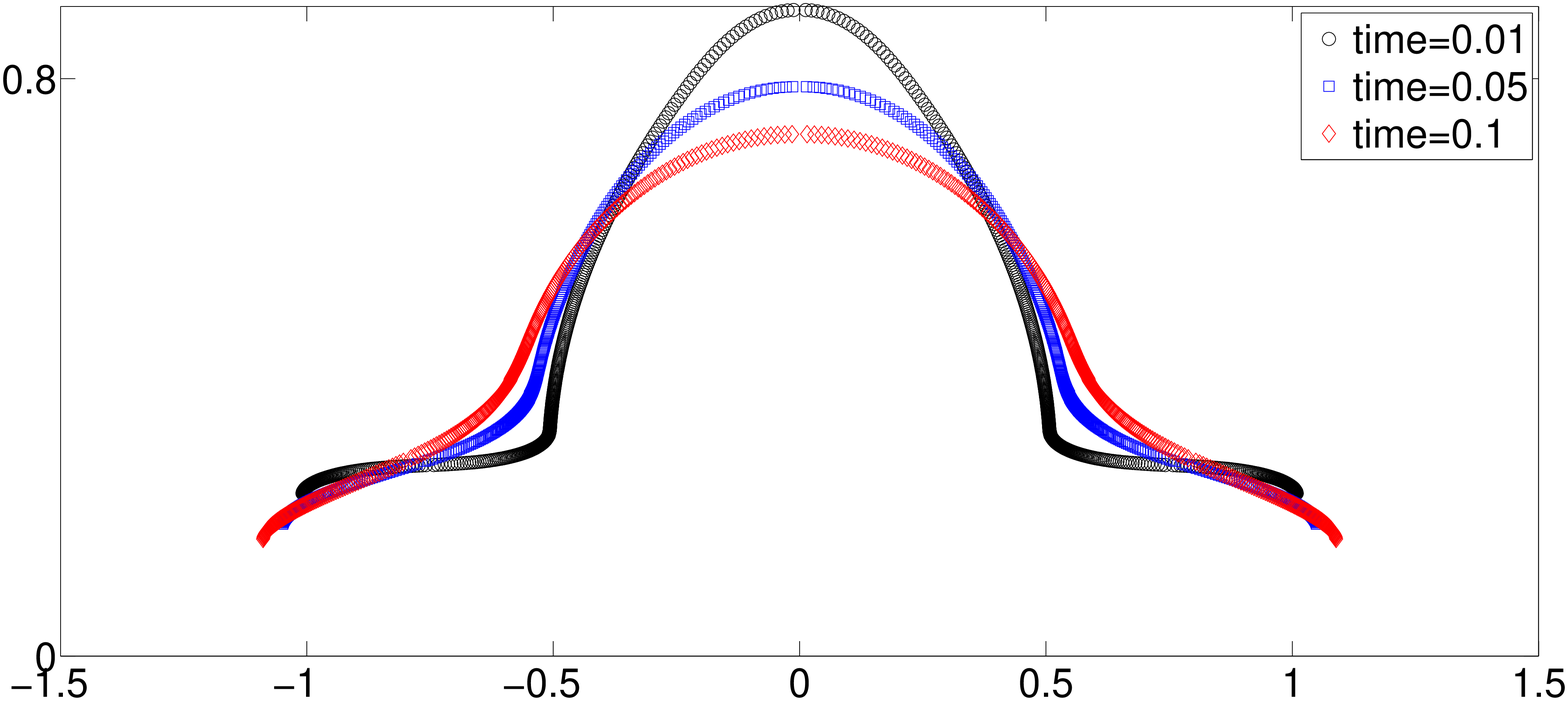}
\includegraphics[width=8.5cm]{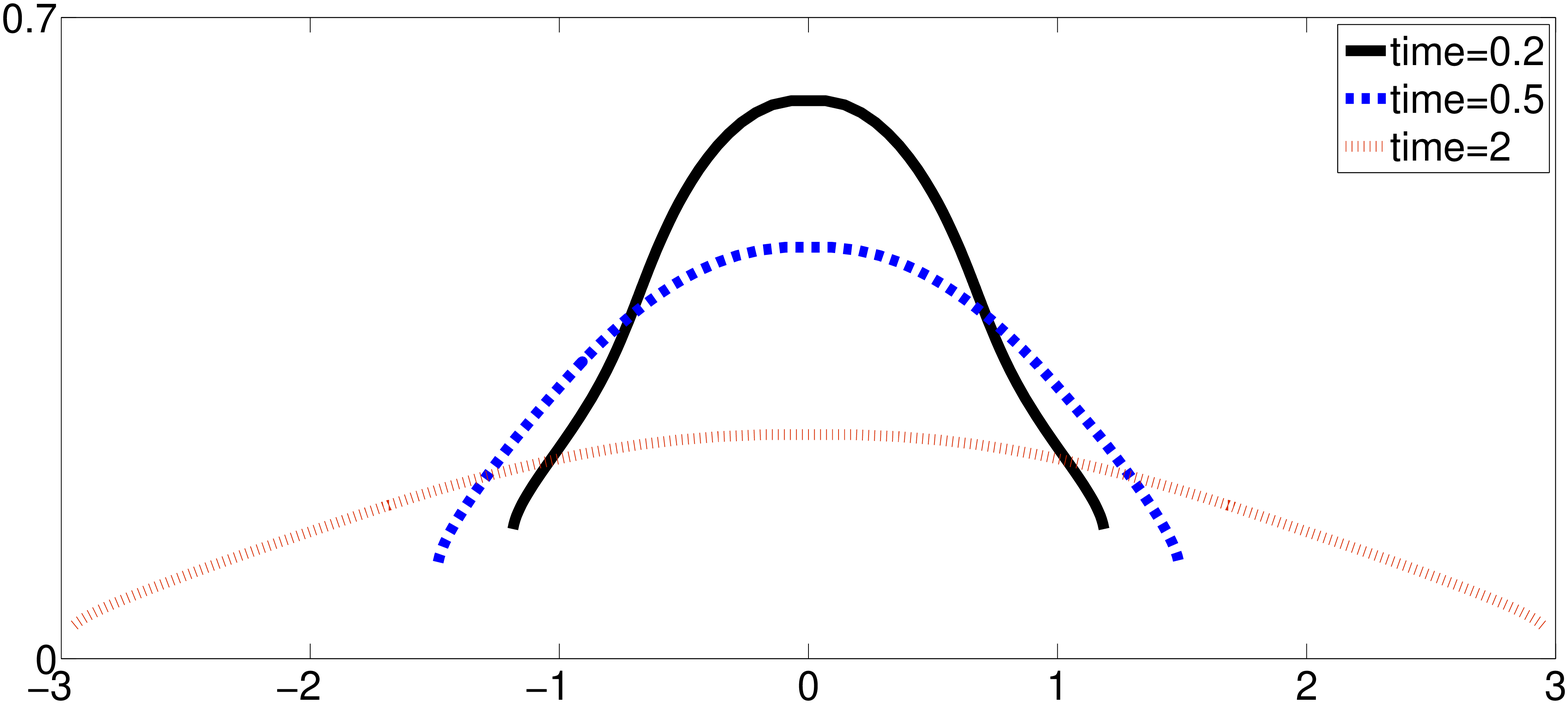}\includegraphics[width=8.5cm]{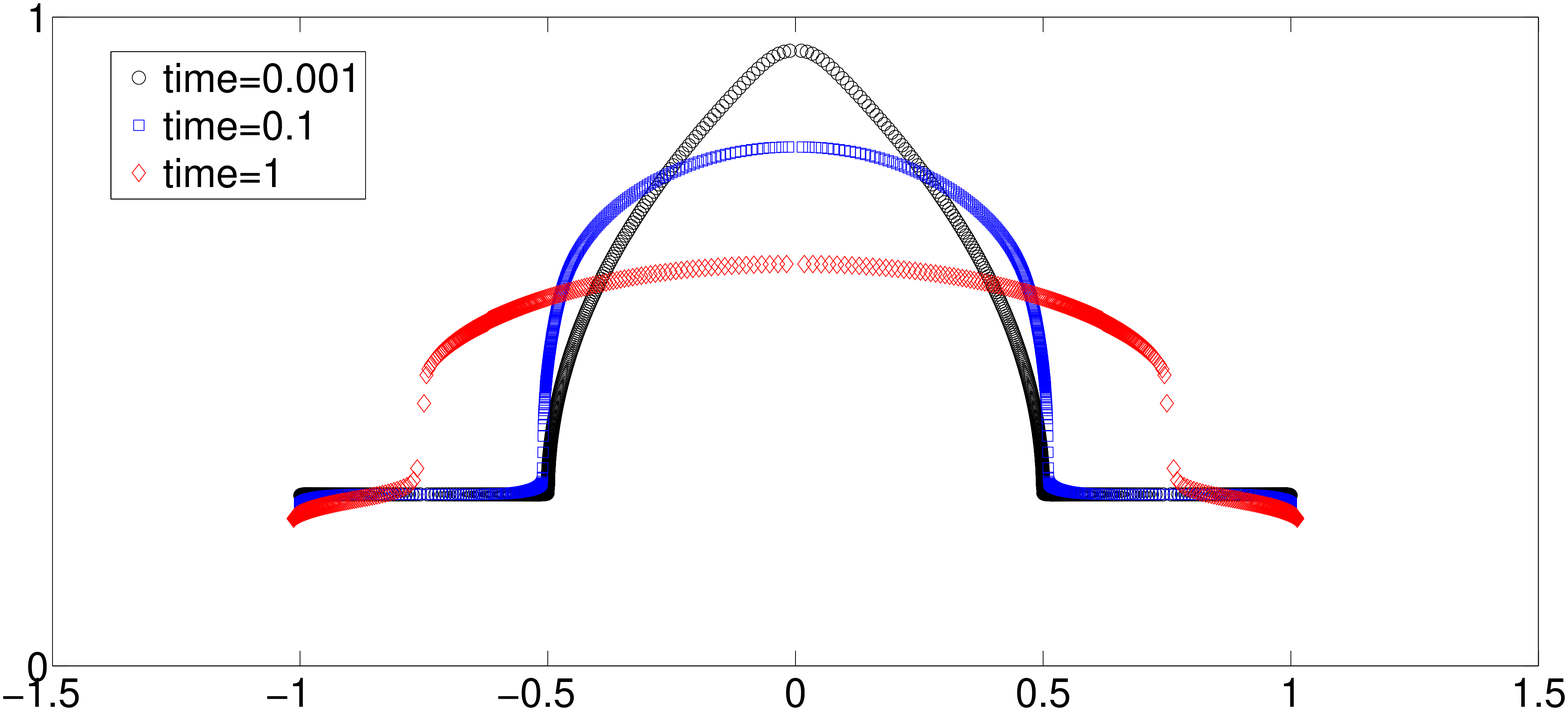}
\caption{Evolution of solutions corresponding to $u_0$. Top left:
Initial datum $u_0$. Top right: Evolution for $m=1$ at small
times. Bottom left: Evolution for $m=1$ for larger times. Bottom
right: Evolution for $m=4$.} \label{fig.disbulk}
\end{figure}

In order to show this behavior we have taken two types of initial
datum with $N=1000$:
$$
u_0(x):=\frac{1}{4}\chi_{[-1,1]}+\frac{3}{2\sqrt{2}}\sqrt{\frac{1}{2}-|x|}\chi_{[-\frac{1}{2},\frac{1}{2}]}
\qquad \mbox{ and } \qquad \tilde
u_0(x):=\frac{1}{4}\chi_{[-1,-\frac12]\cup[\frac12,1]}+\frac{3}{4}\chi_{]-\frac12,\frac12[}\,.
$$
We imposed a high concentration of nodes around the vertical
angles or discontinuities (i.e. $x=\pm \frac{1}{2}$).

\begin{figure}[ht]
  \includegraphics[width=8.5cm]{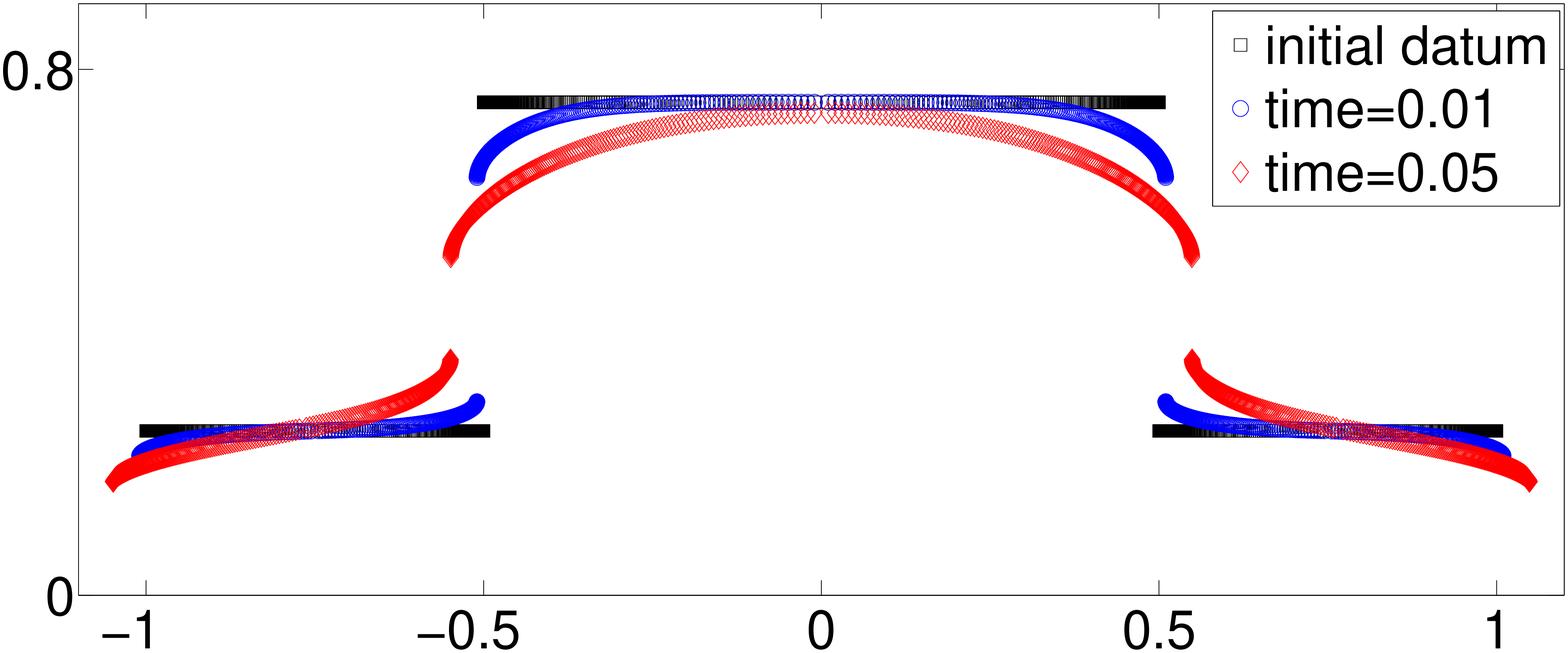}\includegraphics[width=8.5cm]{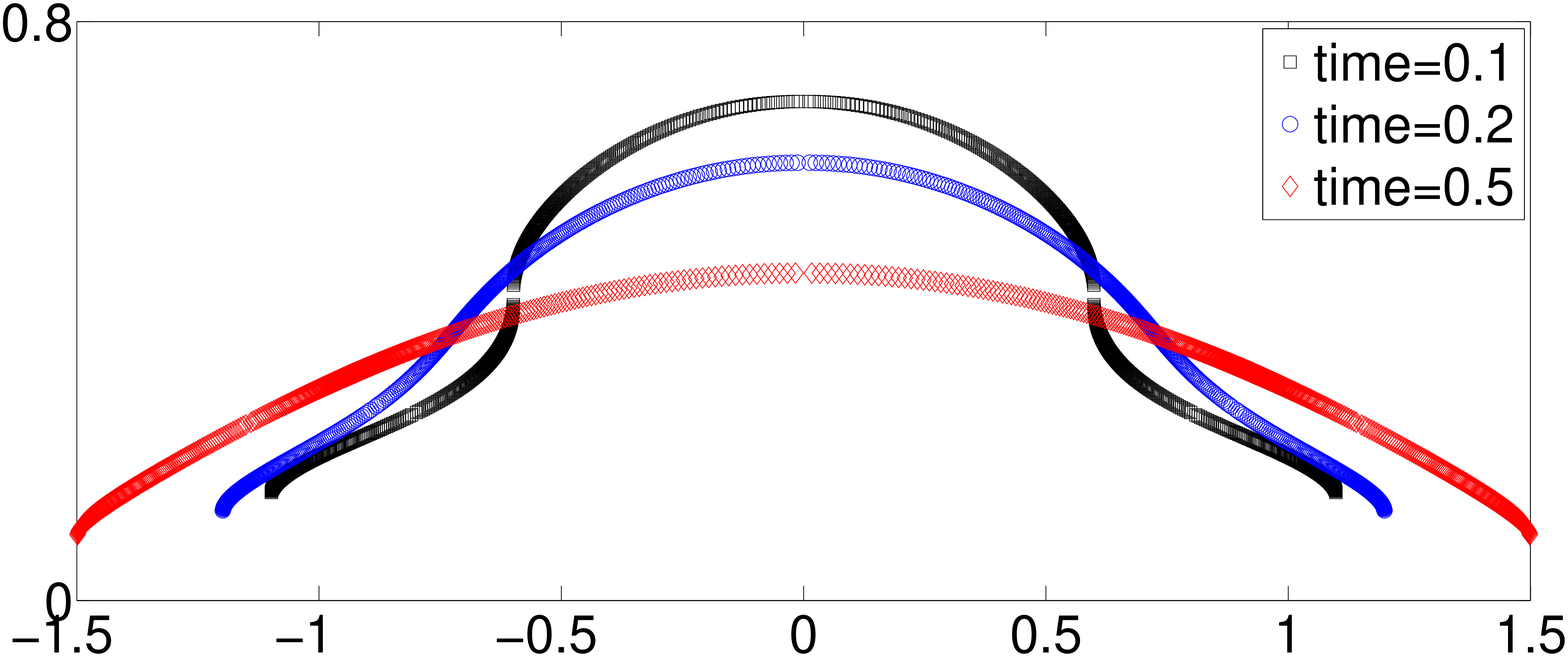}
\includegraphics[width=8.5cm]{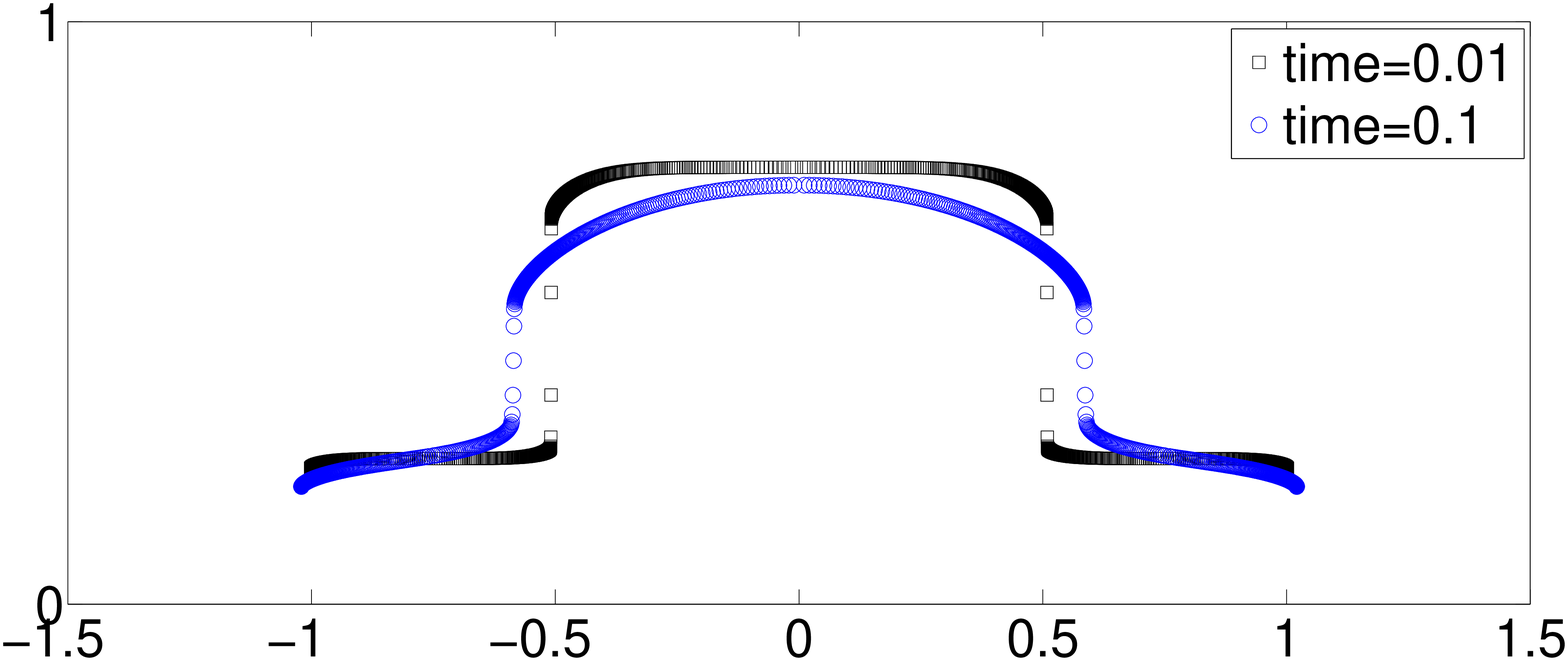}\includegraphics[width=8.5cm]{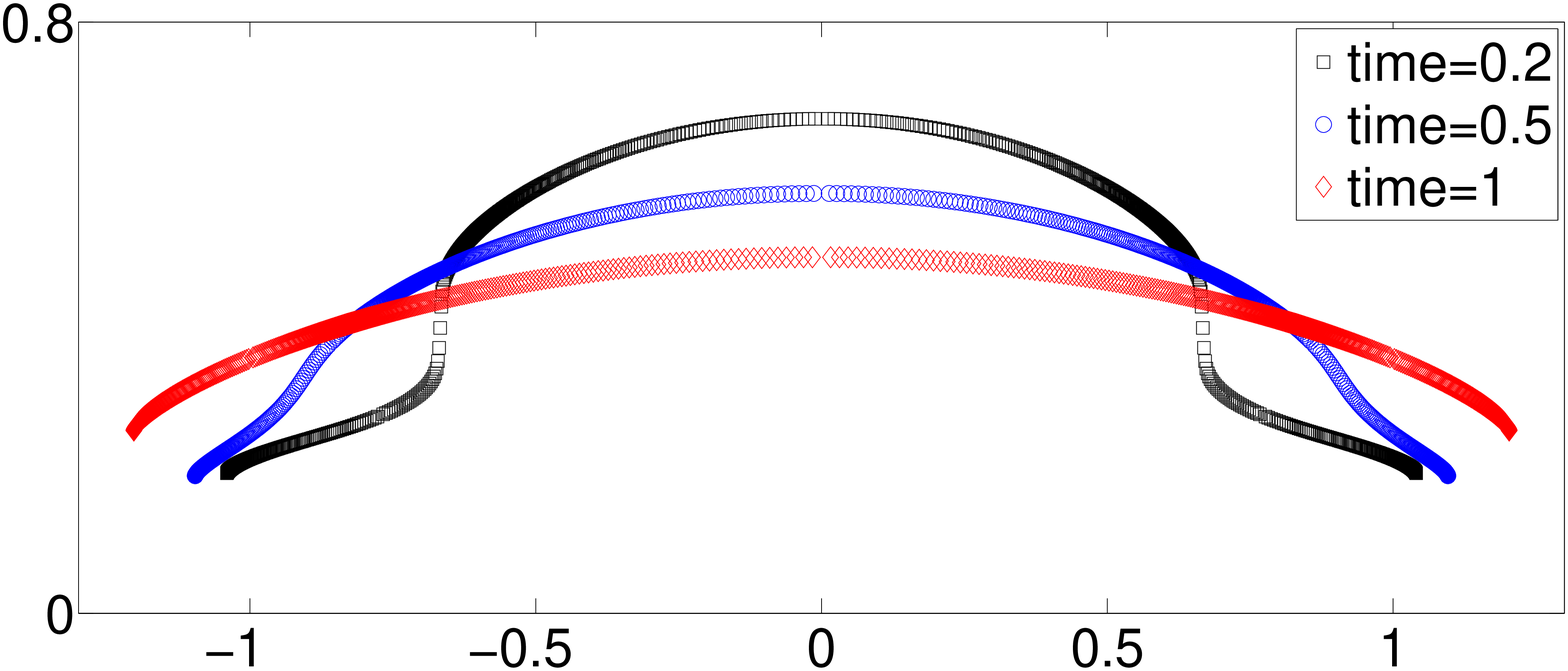}
\caption{Evolution of solutions for $\tilde u_0$. Top: Evolution
for $m=1$ at different times. Bottom: Evolution for $m=2$ at
different times.} \label{fig.disbulk2}
\end{figure}

In Fig. \ref{fig.disbulk} we observe the evolution of the
solutions corresponding to the initial datum $u_0$, demonstrating
the above heuristics. In Fig. \ref{fig.disbulk2}, we see how an
initially discontinuous initial datum $\tilde u_0$ is smoothed
during the evolution both for $m=1$ (as heuristically deduced
before) and for $m=2$. We observe that the smoothing of the
discontinuity is slower with $m>1$ than that of $m=1$.

\subsection{Asymptotic behavior}

In this Section, guided by heuristics, we numerically observe the
asymptotic behavior of solutions to \eqref{pmrel} and the rate of
convergence towards their asymptotic steady state, for which no
result is available in the literature. Performing the classical
self-similar change of variables \cite{carrtosc:pm:00} that
translates porous medium equation onto nonlinear Fokker-Planck
equations given by
\begin{equation}\label{change}
v(x,t)=e^{t}u(e^t x, k(e^{\frac{t}{k}}-1))\,,
\end{equation}
with $k=\frac{1}{m+1}$, then equation \eqref{pmrel} transforms
into
\begin{equation}
  \label{pmefpt} v_t={\rm div }\left(xv+ \frac{v^m\nabla v}{\sqrt{v^2+e^{-2t}|\nabla v|^2}}\right)\,.
\end{equation}
Therefore, formally, when $t\to\infty$ solutions of \eqref{pmefpt}
should converge to a stationary solution of $v_t={\rm
div}\left(xv+v^{m-1}\nabla v\right)$, i.e., to a Gaussian $V(x)$
for $m=1$ or to the corresponding Barenblatt solution $V_m(x)$
when $m>1$ given by
$$
 V(x)=\frac{1}{\sqrt{2\pi}}e^{-\frac{x^2}{2}}
\qquad \mbox{and} \qquad
V_m(x)=\left(C_m-\frac{m-1}{2}x^2\right)_+^\frac{1}{m-1}\,,
$$
where $C_m$ is uniquely determined by the conservation of mass. In
the original variables, then solutions should converge to the
corresponding self-similar profiles obtained from $V$ and $V_m$
via the change of variables \eqref{change} except time
translations. To be precise, the self-similar solutions are given
by
$$
U(|x|,t)=\frac{e^{-\frac{x^2}{4t}}}{\sqrt{4\pi t}} \, \mbox{ for }
m=1 \mbox{ and }\,\, U_m(|x|,t)=t^{\frac{-1}{m+1}}\left(\tilde
C_m-\frac{m-1}{2m(m+1)}|x|^2t^{\frac{-2}{m+1}}\right)_+^\frac{1}{m-1}
\mbox{ for } m> 1,
$$
where $\tilde C_m$ is determined as above.

\begin{figure}[ht]
\includegraphics[width=8.5cm]{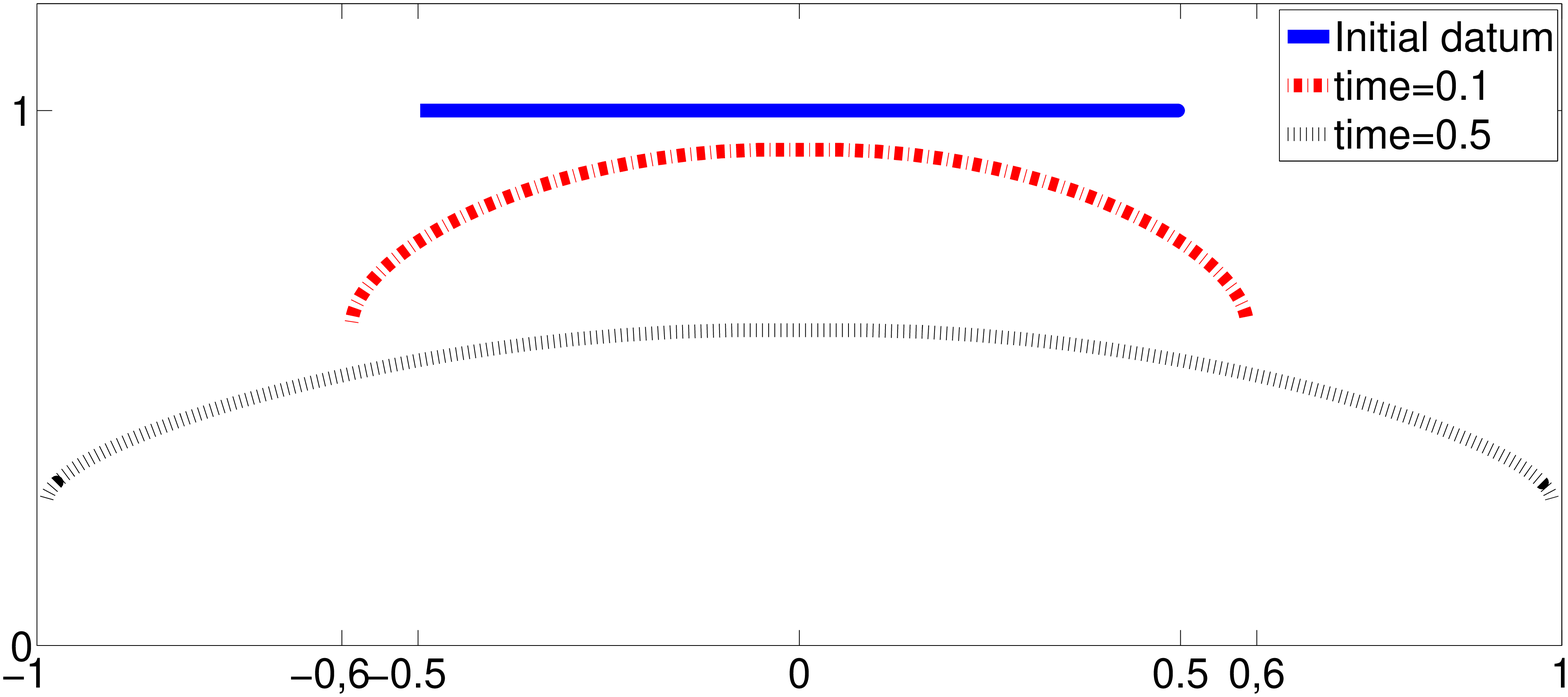}
\includegraphics[width=8.5cm]{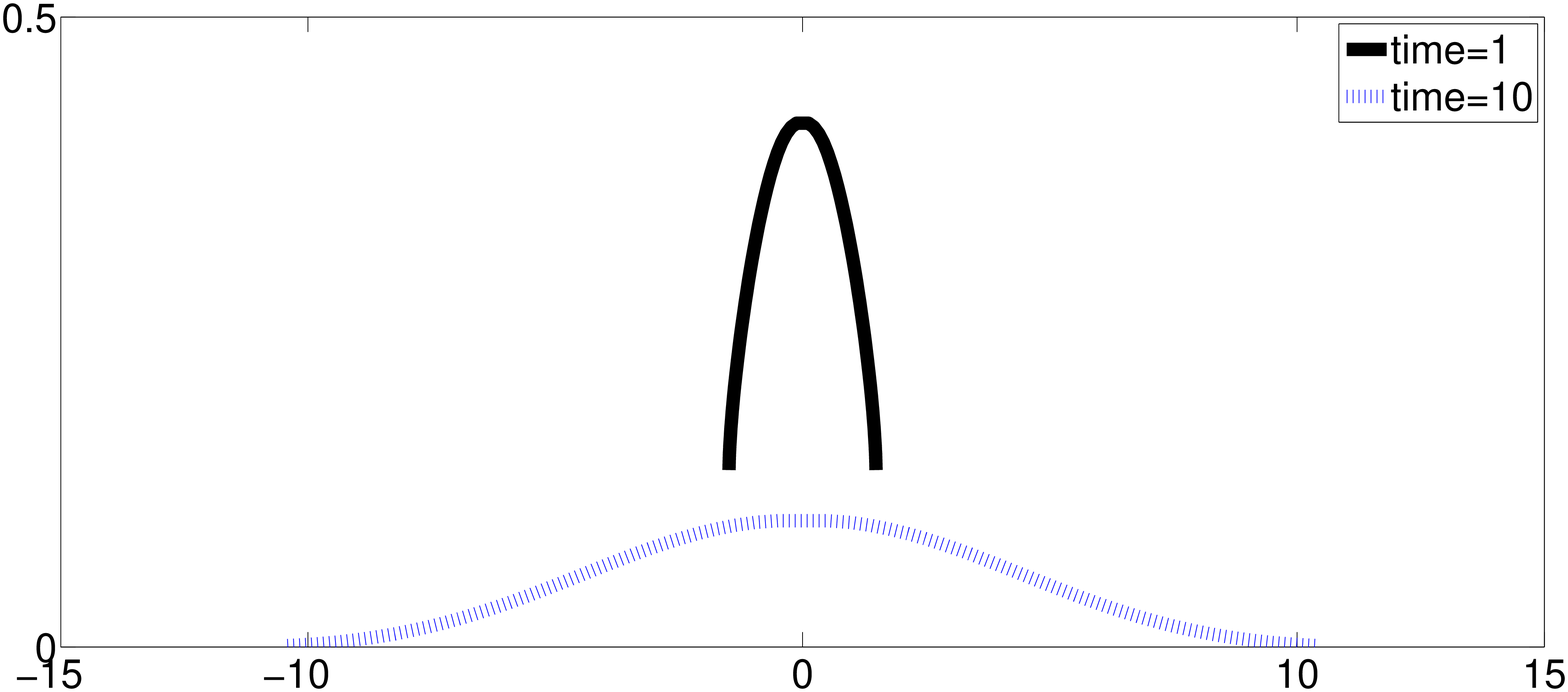}
\includegraphics[width=8.5cm]{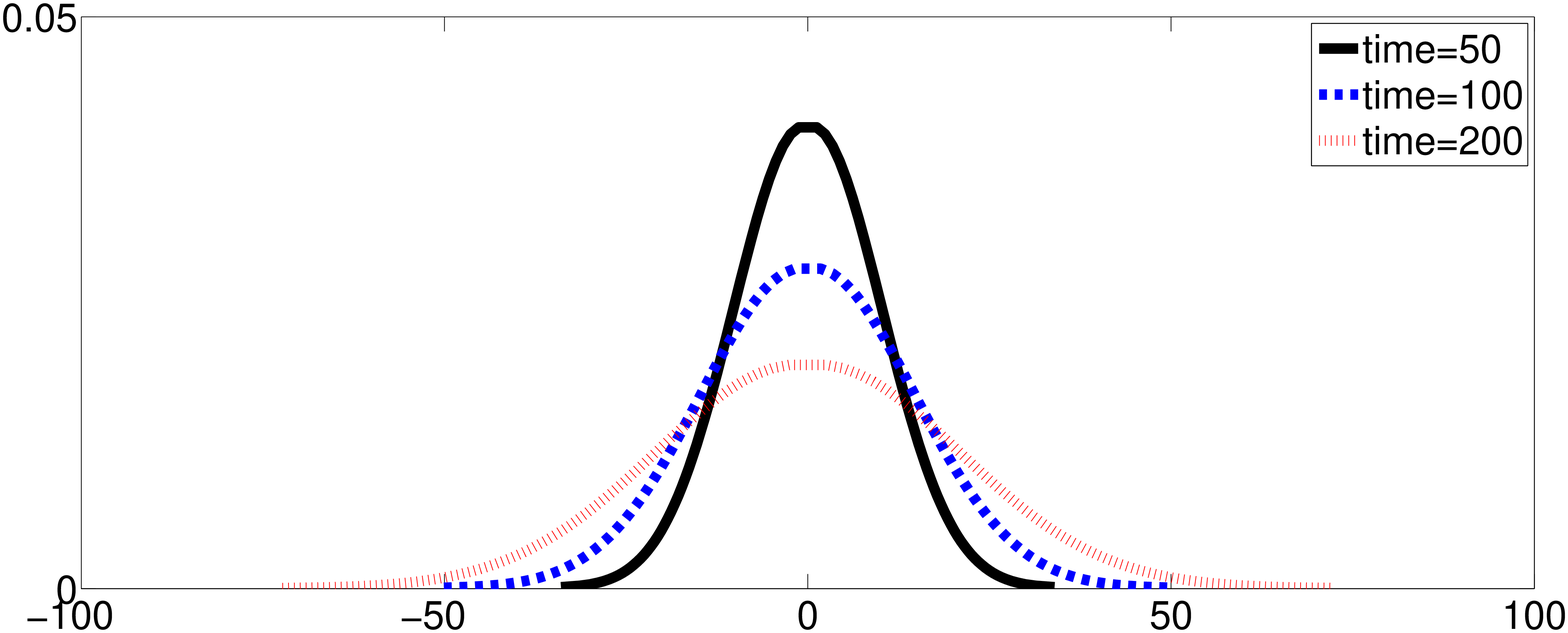}
\includegraphics[width=8.5cm]{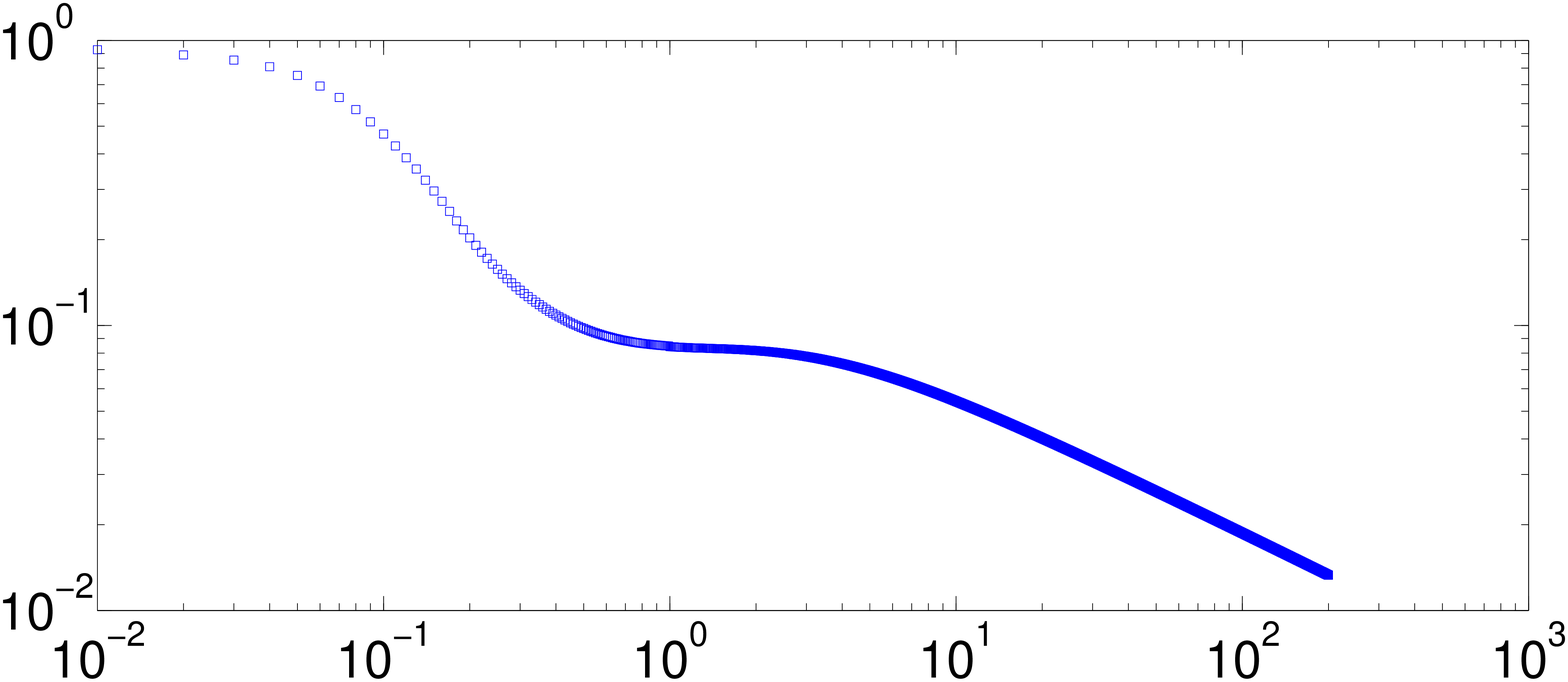}
\caption{Top and Bottom left: Evolution of $u_0$ in case $m=1$ at
different times with $N=100$. Bottom right: log-log plot of the
estimate $\|u(t)-U(t)\|_1$ with $N=1000$.} \label{convgauss}
\end{figure}

In the following computations we have taken
$u_0:=\chi_{[-\frac{1}{2},\frac{1}{2}]}$, $N=100$. We plot the
evolution of the initial datum for different values of $m$ and an
estimate of the difference of $u-U_m$ in the $L^1$-norm. More
precisely, we took $\|u(t)-U_m(t)\|_1:=\frac{1}{N}\sum_{i=1}^N
|u(x_i,t)-U_m(x_i,t)|$.

Some comments are in order. First of all, in Figure
\ref{convgauss}, we note that for $m=1$, while time is small, the
numerical solution satisfies both the linear propagation of the
support property, as well as the vertical contact angle property.
However, for larger times, these two conditions are lost during
the computation. This is due to the fact that we took a fixed
number of nodes ($N=100$), and as time increases, this number of
nodes is clearly insufficient. We have observed that by increasing
the number of nodes (for instance to $N=1000$) the time in which
the numerical solution is more accurate increases. We can also see
in Figure \ref{convgauss}, that, in spite of this, the numerical
solution tends to a Gaussian with an algebraic rate of convergence
that seems to be $\tfrac12$ the one of the heat equation. However,
it is exactly by the same reason as before that when time
increases, the rate of convergence degenerates. For this reason,
we have included in Figure \ref{convgauss} the $L^1$-convergence
rate with $N=1000$.

\begin{figure}[ht]
\includegraphics[width=8.5cm]{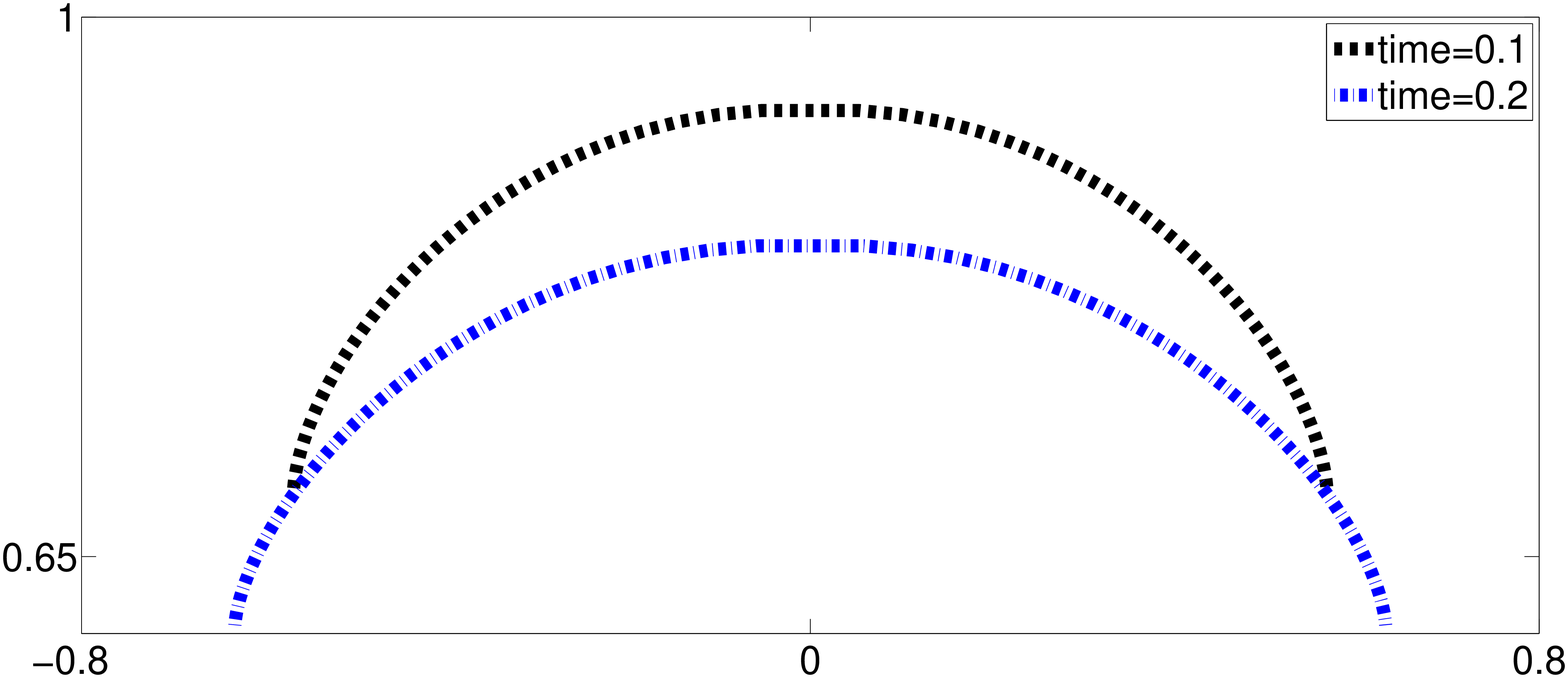}
\includegraphics[width=8.5cm]{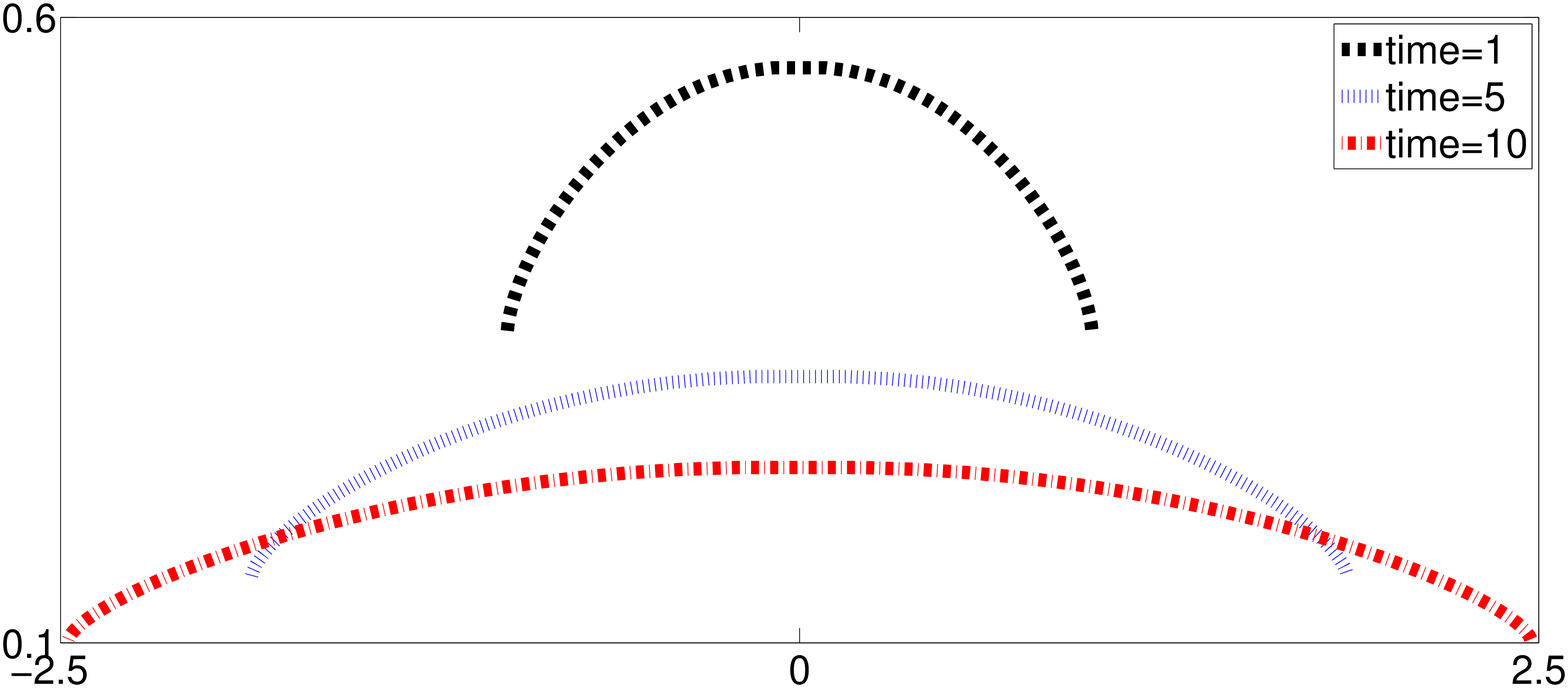}
\includegraphics[width=8.5cm]{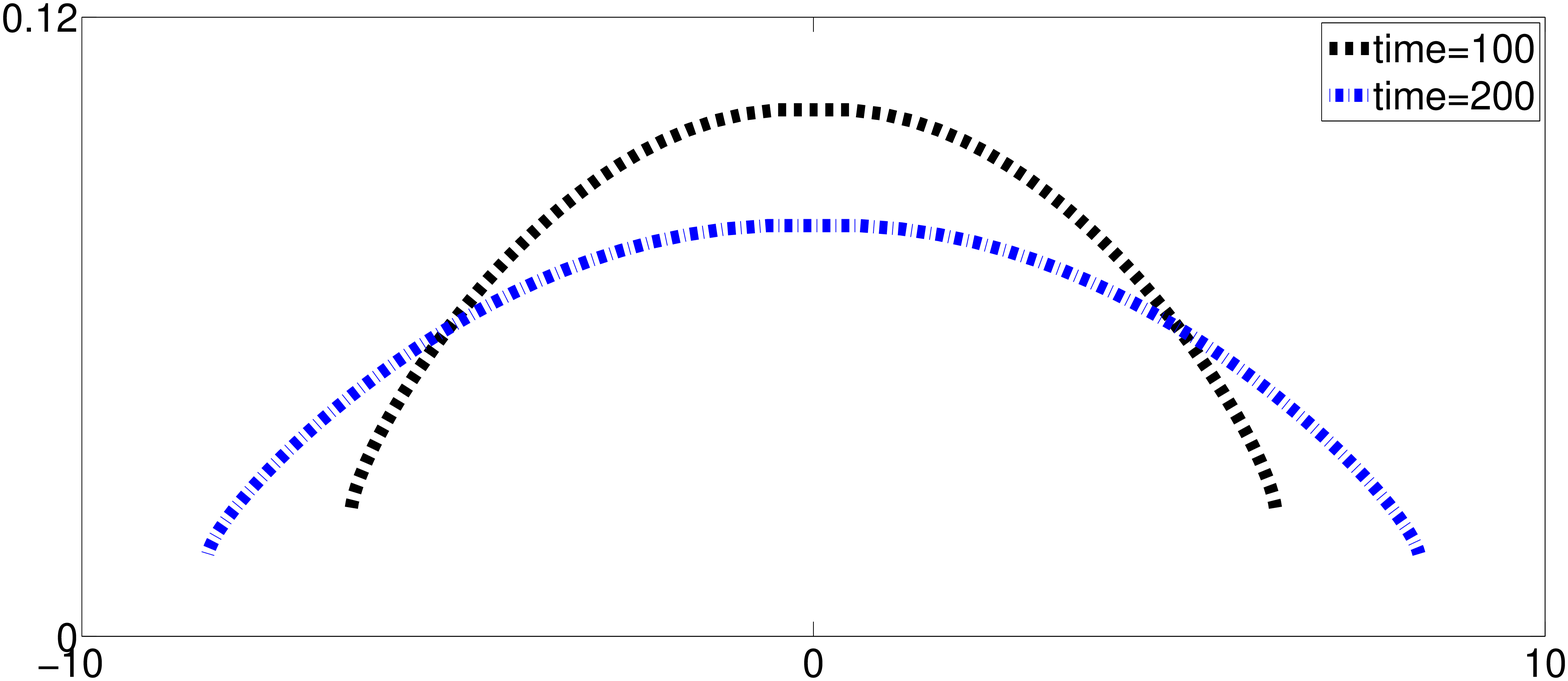}
\includegraphics[width=8.5cm]{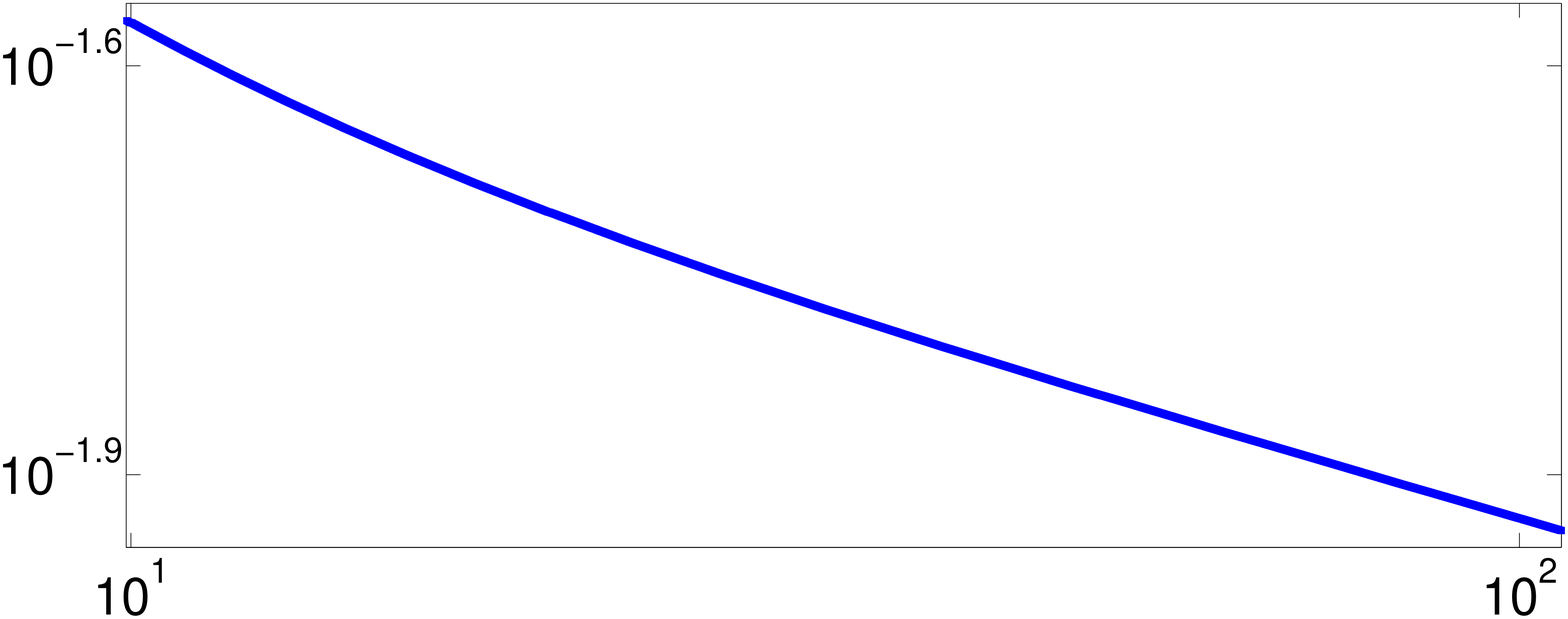}
\caption{Top and Bottom left: Evolution of $u_0$ in case $m=2$ at
different times. Bottom right: log-log plot of the estimate
$\|u(t)-U_2(t)\|_1$.} \label{convm2}
\end{figure}

Instead, when $m>1$, the support of the solution does not
propagate so fast and we can observe in Figures \ref{convm2} and
\ref{convm10} how the vertical contact angle property is preserved
even for large times. Moreover, in Figures \ref{convm2} and
\ref{convm10} we can see how the numerical solution tends to $U_m$
for $m=2$ and $m=10$. In both cases, the rate of convergence is
algebraic and, numerically, it is surprisingly seen that it might
correspond to  $\frac{1}{3}$ in the first case and to
$\frac{1}{11}$ in the second one; i.e.: the same convergence rate
as for the porous medium equation, see
\cite{carrtosc:pm:00,Otto01}.

\begin{figure}[ht]
\includegraphics[width=8.5cm]{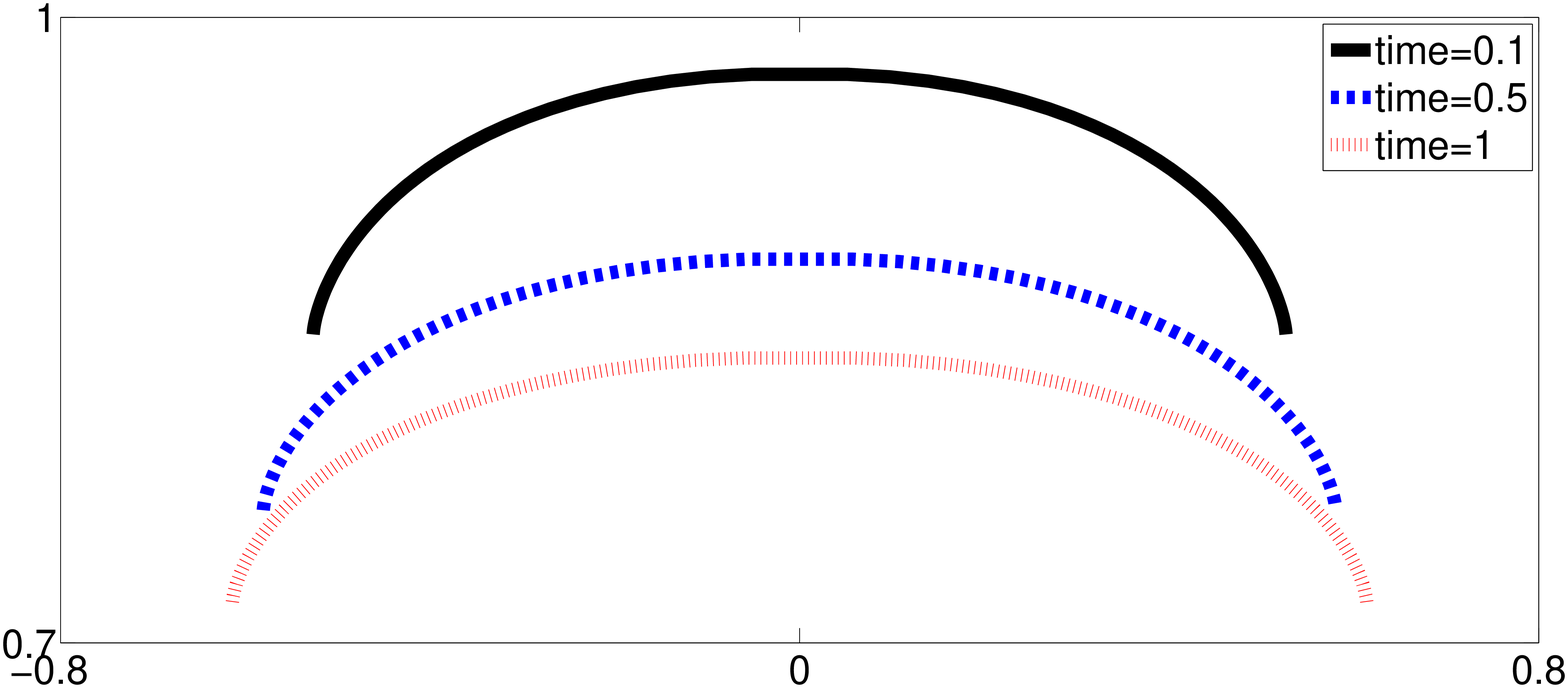}
\includegraphics[width=8.5cm]{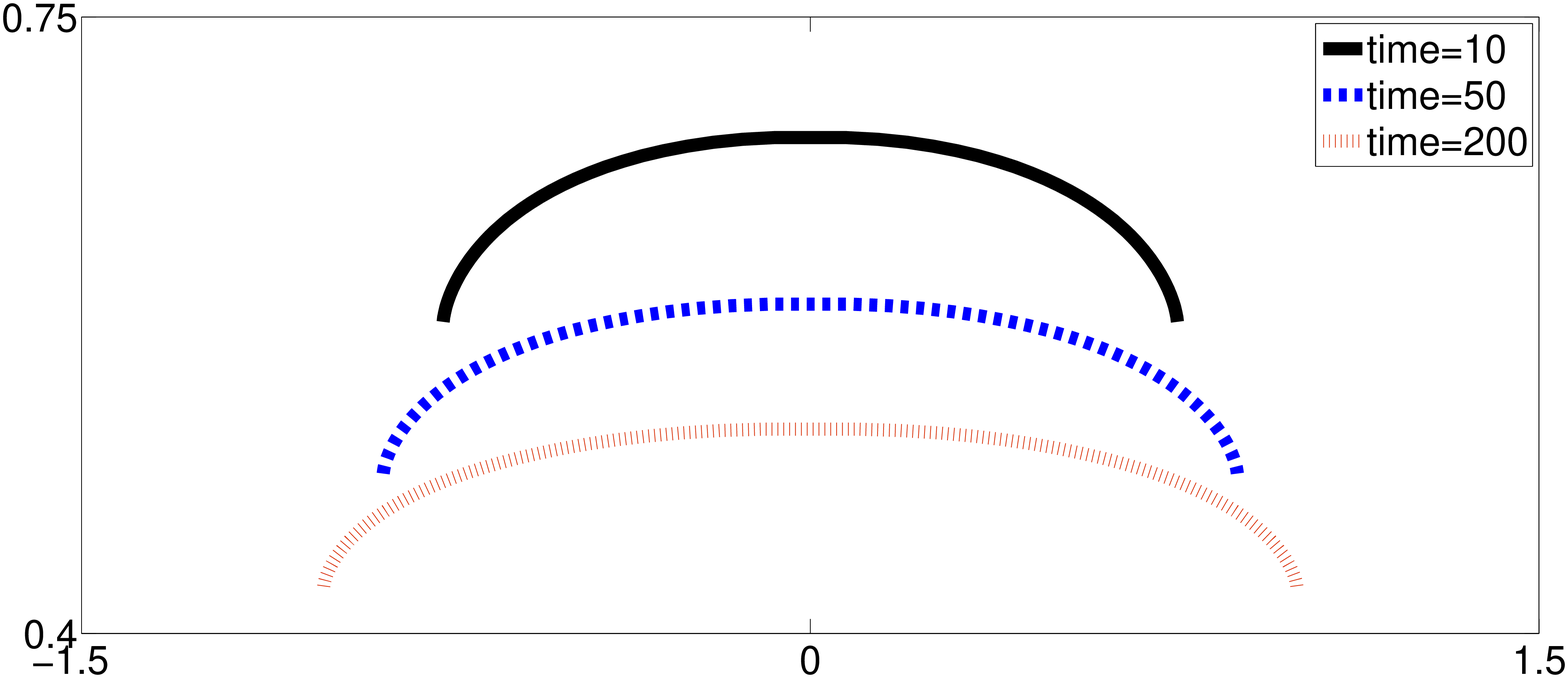}
\includegraphics[width=8.5cm]{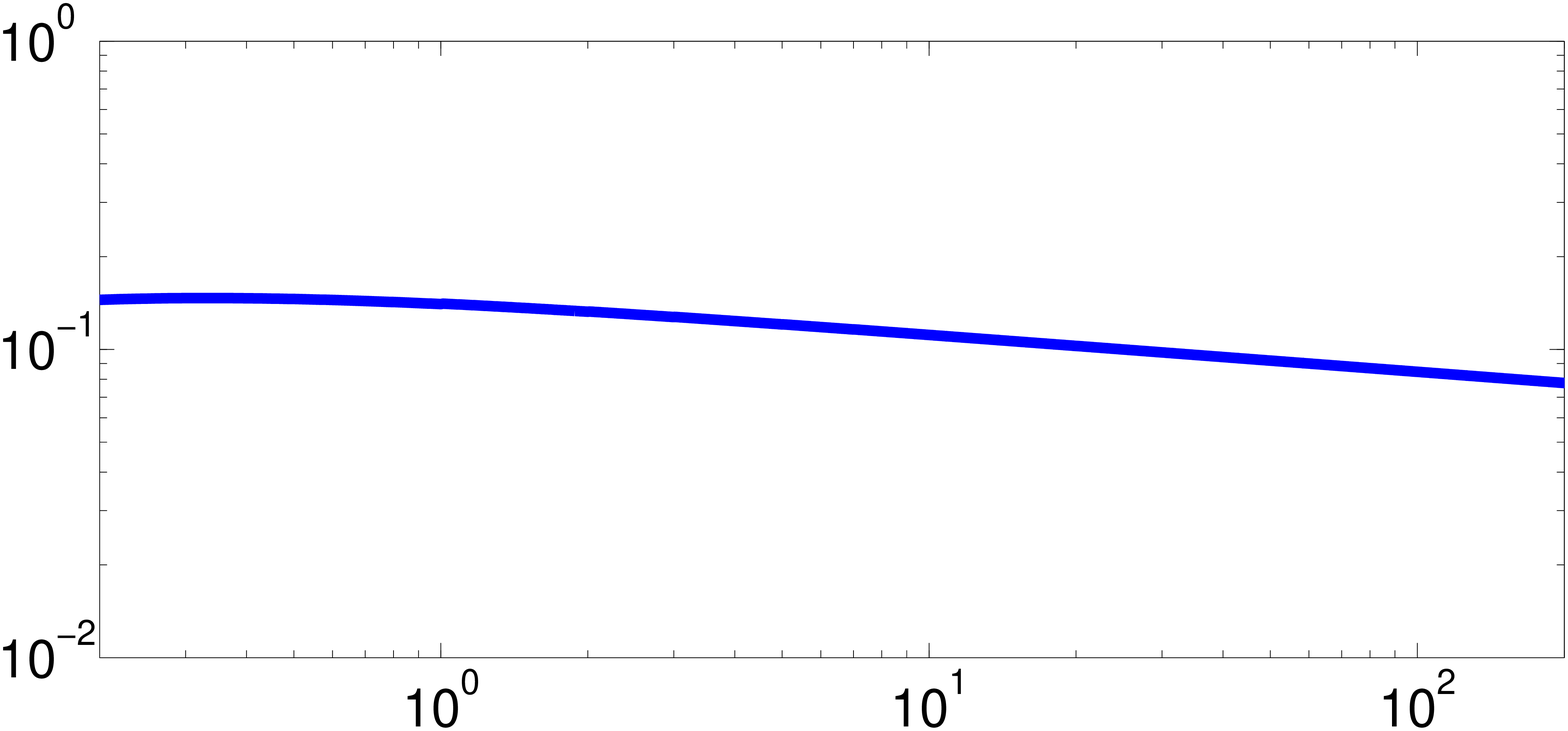}
\includegraphics[width=8.5cm]{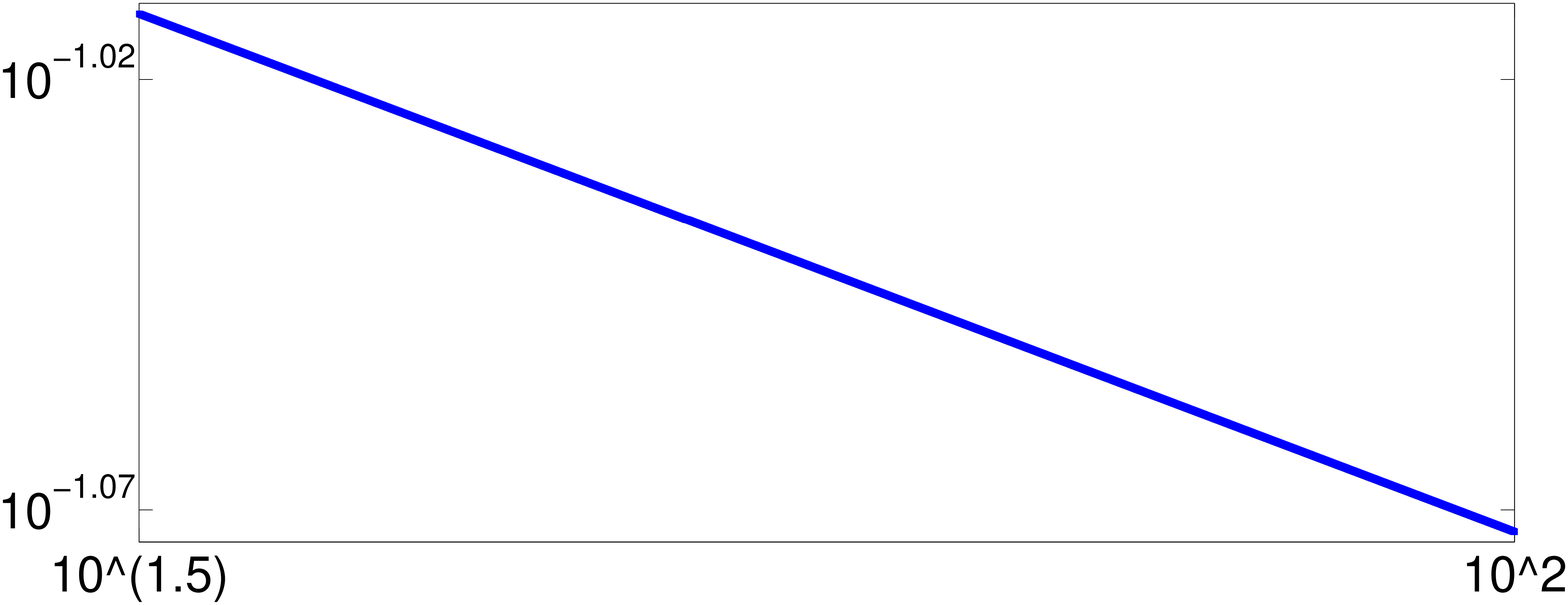}
\caption{Top: Evolution of $u_0$ in case $m=10$ at different
times. Bottom left: log-log plot of the estimate
$\|u(t)-U_{10}(t)\|_1$. Bottom right: zoom of the final time
interval} \label{convm10}
\end{figure}

\subsection{Convergence toward the homogeneous relativistic heat
equation}

We finally show numerically how solutions to \eqref{RHEg},
converge to solutions of the homogeneous relativistic heat
equation
$$
\left\{\begin{array}
  {cc}\displaystyle u_t=\left(u\frac{ u_x}{|u_x|}\right)_x & {\rm in \ } \R^N\times [0,T] \\ u_0(x)=u_0 & {\rm in \ } \R^N
\end{array}\right.
$$
when the kinematic viscosity $\nu \to +\infty$ as already proved
in \cite{ACMRelatTM}. In Fig. \ref{fig.convnuinfty} we estimate
the evolution in time of the difference in the $L^1$-norm for
solutions corresponding to the initial data
$u_0=\chi_{[-\frac{1}{2},\frac{1}{2}]}$ for different values of
$\nu$ with respect to the explicit solution $u_{hom}$, given by
$$
u_{hom}(x,t)=\frac{1}{1+2t}\chi_{[-\frac{1}{2}-t,\frac{1}{2}+t]}
$$
when $\nu\to\infty$.

\begin{figure}[ht]
\includegraphics[width=8.5cm]{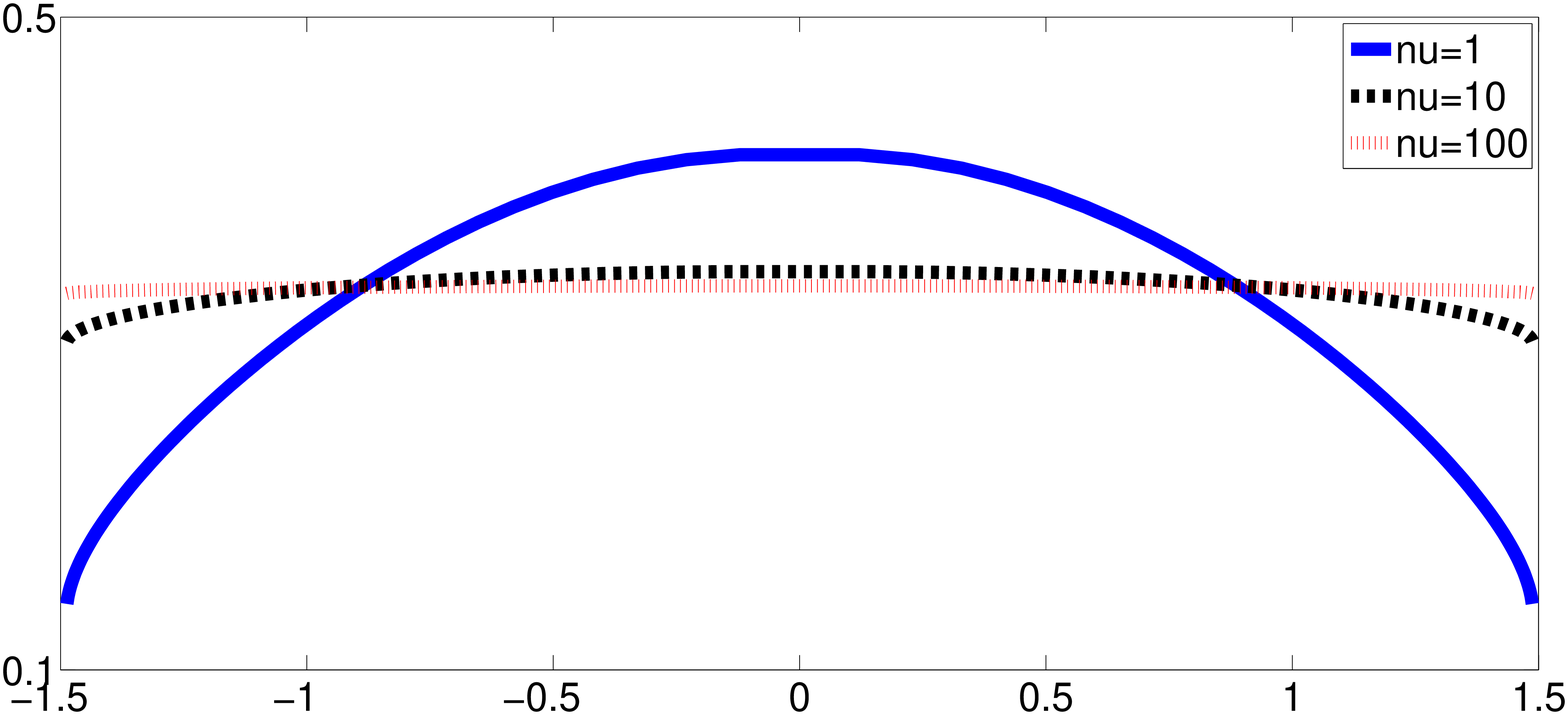}\includegraphics[width=8.5cm]{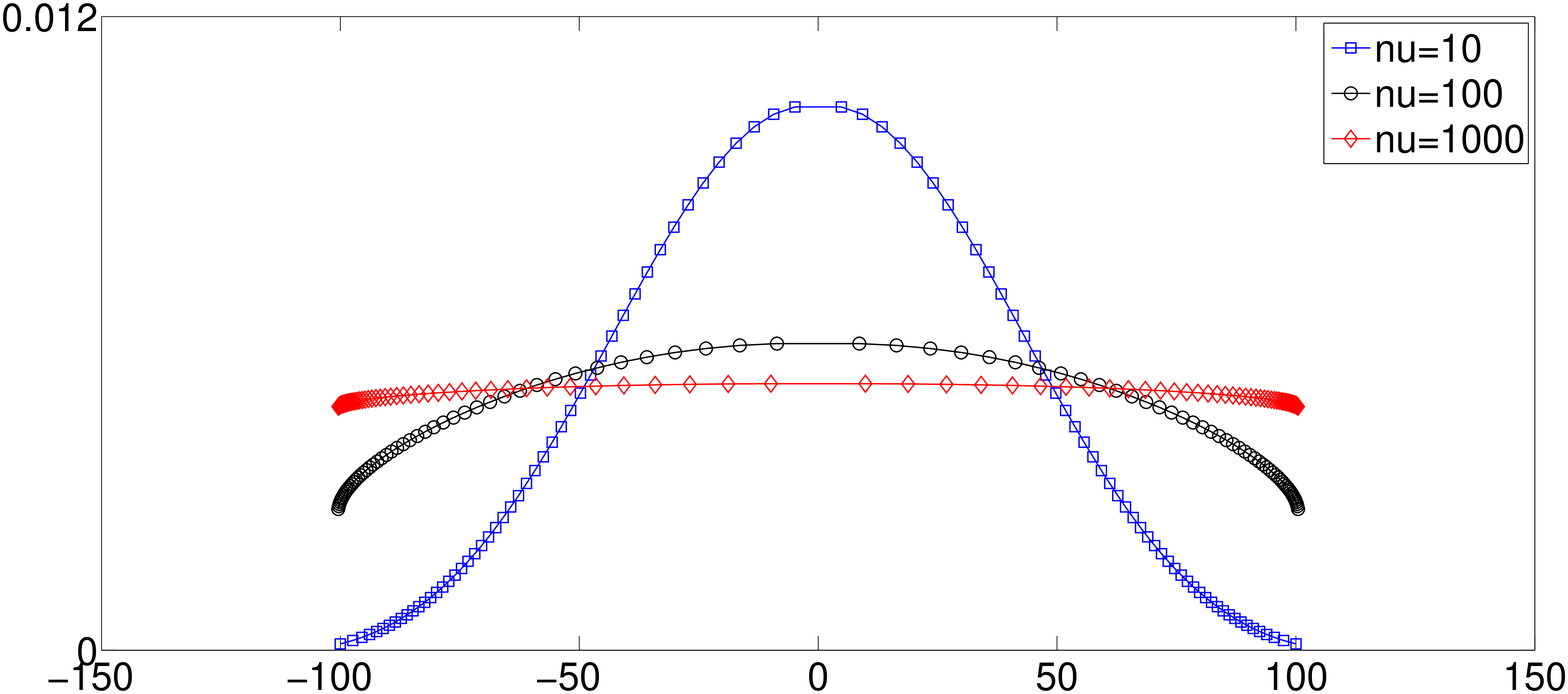}
\centering\includegraphics[width=8.5cm]{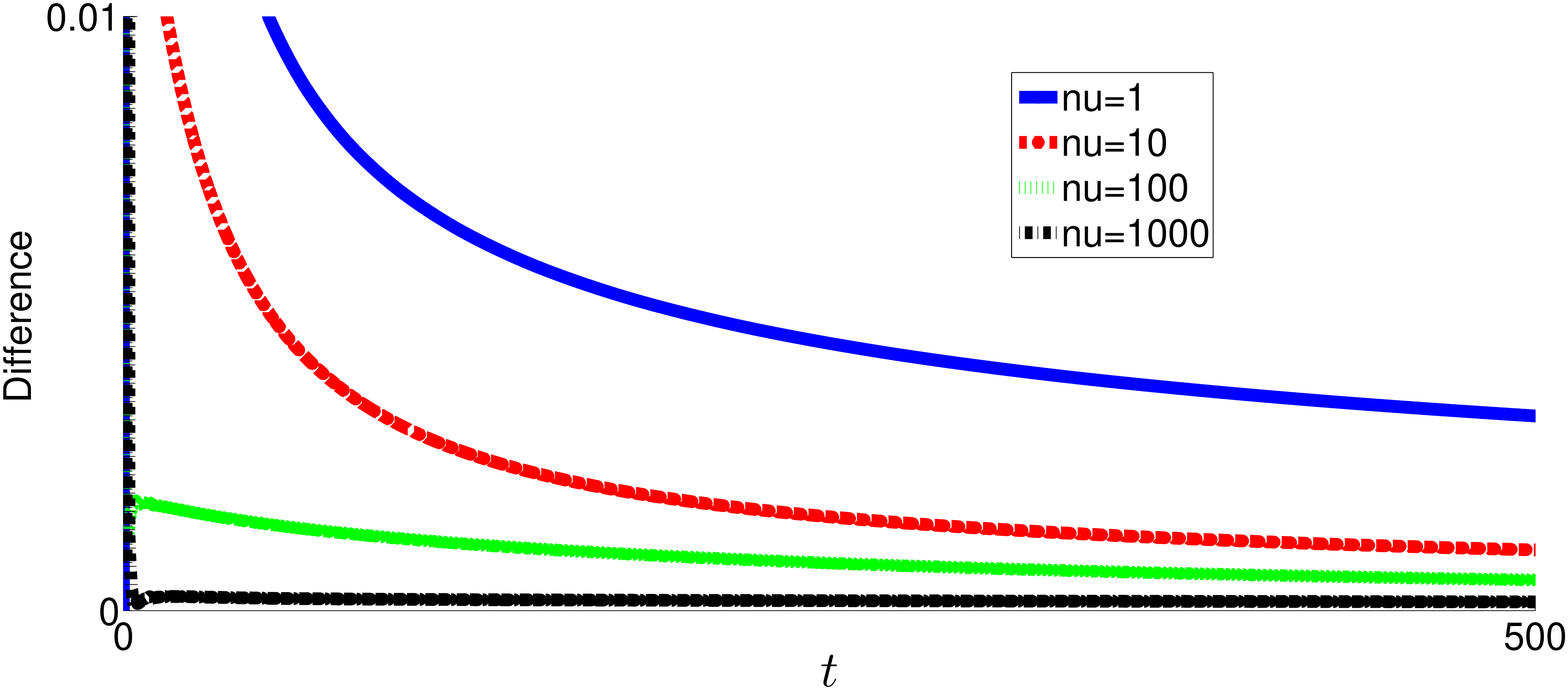}
\caption{Top left: Numerical solution at $t=1$ for different
values of $\nu$. Top Right: Numerical solution at $t=100$ for
different values of $\nu$. Bottom: Evolution of the
$L^1$-difference with respect to $u_{hom}$.}
\label{fig.convnuinfty}
\end{figure}


\setcounter{equation}{0} \setcounter{subsection}{0}
\renewcommand{\theequation}{A.\arabic{equation}}
\renewcommand{\thesubsection}{A.\arabic{subsection}}

\section*{Appendix: A primer on Entropy Solutions}
\label{sect:appendix}

We collect in this Appendix some definitions that are needed to
work with entropy solutions of flux limited diffusion equations.

Note that the equation (\ref{RHE}) can be written as
\begin{equation} \label{DirichletproblemP}
u_t = \b(u,  u_x)_x, \qquad \hbox{in \hspace{0.2cm} $Q_T=(0,T)\times \R$}
\end{equation}
where $\b(z,\xi) = \nabla_\xi f(z,\xi)$ and
\begin{equation}\label{funct:frhe}
f(z,\xi) = z  \sqrt{z^2 + \vert \xi\vert^2}.
\end{equation}
As usual, we define
\begin{equation}\label{funct:arhe}
h(z,\xi) = \b(z,\xi)\cdot \xi = \frac{z \vert
\xi\vert^2}{\sqrt{z^2 + \vert \xi\vert^2}}.
\end{equation}
Note that $f$ is convex in $\xi$ and both $f,h$ have linear growth as $|\xi|\to\infty$.

\subsection{Functions of bounded variation and some generalizations}\label{sect:bv}

Denote by ${\mathcal L}^N$ and ${\mathcal H}^{N-1}$ the
$N$-dimensional Lebesgue measure and the $(N-1)$-dimensional
Hausdorff measure in $\R^N$, respectively. Given an open set
$\Omega$ in $\R^N$  we denote by ${\mathcal D}(\Omega)$  the space
of infinitely differentiable functions with compact support in
$\Omega$. The space of continuous functions with compact support
in $\R^N$ will be denoted by $C_c(\R^N)$.

Recall that if $\Omega$ is an open subset of $\R^N$, a function $u
\in L^1(\Omega)$ whose gradient $Du$ in the sense of distributions
is a vector valued Radon measure with finite total variation in
$\Omega$ is called a {\it function of bounded variation}. The
class of such functions will be denoted by $BV(\Omega)$.  For $u
\in BV(\Omega)$, the vector measure $Du$ decomposes into its
absolutely continuous and singular parts $Du = D^{ac} u + D^s u$.
Then $D^{ac} u = \nabla u \ \L^N$, where $\nabla u$ is the
Radon--Nikodym derivative of the measure $Du$ with respect to the
Lebesgue measure $\L^N$. We also split $D^su$ in two parts: the
{\it jump} part $D^j u$ and the {\it Cantor} part $D^c u$. It is
well known (see for instance \cite{Ambrosio}) that $$D^j u = (u^+
- u^-) \nu_u \H^{N-1} \res J_u,$$ where $u^+(x),u^-(x)$ denote the
upper and lower approximate limits of $u$ at $x$, $J_u$ denotes
the set of approximate jump points of $u$ (i.e. points $x\in
\Omega$ for which $u^+(x)> u^-(x)$), and $\nu_u(x) =
\frac{Du}{\vert D u \vert}(x)$, being $\frac{Du}{\vert D u \vert}$
the Radon--Nikodym derivative of $Du$ with respect to its total
variation $\vert D u \vert$. For further information concerning
functions of bounded variation we refer to \cite{Ambrosio}.

We need to consider the following truncation functions. For $a <
b$, let $T_{a,b}(r) := \max(\min(b,r),a)$, $ T_{a,b}^l =  T_{a,b}-l$.
We denote
$$
\mathcal T_r:= \{ T_{a,b} \ : \ 0 < a < b \},  \ \ \
$$
$$
\mathcal{T}^+:= \{ T_{a,b}^l \ : \ 0 < a < b ,\, l\in \R, \, T_{a,b}^l\geq 0 \}.  \ \ \
$$

Given any function $w$ and $a,b\in\R$ we shall use the notation
$\{w\geq a\} = \{x\in \R^N: w(x)\geq a\}$, $\{a \leq w\leq b\} =
\{x\in \R^N: a \leq w(x)\leq b\}$, and similarly for the sets $\{w
> a\}$, $\{w \leq a\}$, $\{w < a\}$, etc.

We need to consider the following function space
$$
TBV_{\rm r}^+(\R^N):= \left\{ w \in L^1(\R^N)^+  \ :  \ \ T_{a,b}(w) - a \in BV(\R^N), \
\ \forall \ T_{a,b} \in \mathcal T_r \right\}.
$$
Notice that $TBV_{\rm r}^+(\R^N)$ is closely related to the space
$GBV(\R^N)$  of generalized functions of bounded variation
introduced by E. Di Giorgi and L. Ambrosio in \cite{Ambrosio}.
Using the chain rule for BV-functions (see for instance
\cite{Ambrosio}), one can give a sense to $\nabla u$ for a
function $u \in TBV^+(\R^N)$ as the unique function $v$ which
satisfies
\begin{equation*}\label{E1WRN}
\nabla T_{a,b}(u) = v \1_{\{a < u  < b\}} \ \ \ \ \ {\mathcal
L}^N-{\rm a.e.}, \ \ \forall \ T_{a,b} \in \mathcal{T}_r.
\end{equation*}
We refer to \cite{Ambrosio} for details.

\subsection{Functionals defined on BV}\label{sect:functionalcalculus}

In order to define the notion of entropy solutions of
(\ref{DirichletproblemP}) and give a characterization of them, we
need a functional calculus defined on functions whose truncations
are in $BV$.

Let $\Omega$ be an open subset of $\R^N$. Let $g: \Omega \times \R
\times \R^N \rightarrow [0, \infty[$ be a Borel function such that
\begin{equation*}\label{LGRWTH}
C(x) \vert \zeta \vert - D(x) \leq g(x, z, \zeta)  \leq M'(x) + M
\vert \zeta \vert
\end{equation*}
for any $(x, z, \zeta) \in \Omega \times \R \times \R^N$, $\vert
z\vert \leq R$, and any $R>0$, where $M$ is a positive constant
and  $C,D,M' \geq 0$ are bounded Borel functions which may depend
on $R$. Assume that $C,D,M' \in L^1(\Omega)$.

Following Dal Maso \cite{Dalmaso} we consider the
functional:
\begin{eqnarray*}\label{RelEnerg}
{\mathcal R}_g(u)&:=& \displaystyle\int_{\Omega} g(x,u(x), \nabla
u(x)) \, dx + \int_{\Omega} g^0 \left(x,
\tilde{u}(x),\frac{Du}{\vert D u \vert}(x) \right) \,  d\vert D^c
u \vert
\nonumber \\
&&+ \displaystyle\int_{J_u} \left(\int_{u_-(x)}^{u_+(x)}
g^0(x, s, \nu_u(x)) \, ds \right)\, d \H^{N-1}(x),
\end{eqnarray*}
for $u \in BV(\Omega) \cap L^\infty(\Omega)$, being $\tilde{u}$ is
the approximated limit of $u$ \cite{Ambrosio}. The recession
function $g^0$ of $g$ is defined by
\begin{equation*}\label{Asimptfunct}
 g^0(x, z, \zeta) = \lim_{t \to 0^+} tg \left(x, z, \frac{\zeta}{t}
 \right).
\end{equation*}
It is convex and homogeneous of degree $1$ in $\zeta$.

In case that $\Omega$ is a bounded set, and under standard
continuity and coercivity assumptions,  Dal Maso proved in
\cite{Dalmaso} that ${\mathcal R}_g(u)$ is $L^1$-lower semi-continuous
for $u \in BV(\Omega)$. More recently, De Cicco, Fusco, and Verde
\cite{DCFV} have obtained a very general result about the
$L^1$-lower semi-continuity of ${\mathcal R}_g$ in $BV(\R^N)$.

Assume that $g:\R\times \R^N \to [0, \infty[$ is a Borel function
such that
\begin{equation}\label{LGRWTHnox}
C \vert \zeta \vert - D \leq g(z, \zeta)  \leq M(1+ \vert \zeta \vert)
\qquad \forall (z,\zeta)\in \R^N, \, \vert z \vert \leq R,
\end{equation}
for any $R > 0$ and for some constants  $C,D,M \geq 0$ which may
depend on $R$. Observe that both functions $f,h$ defined in
(\ref{funct:frhe}), (\ref{funct:arhe}) satisfy (\ref{LGRWTHnox}).

Assume that
$$
\1_{\{u\leq a\}} \left(g(u(x), 0) - g(a, 0)\right), \1_{\{u \geq b\}} \left(g(u(x),
0) - g(b, 0) \right) \in L^1(\R^N),
$$
for any $u\in L^1(\R^N)^+$. Let $u \in TBV_{\rm r}^+(\R^N)  \cap
L^\infty(\R^N)$  and $T = T_{a,b}-l\in {\mathcal T}^+ $. For each
$\phi\in C_c(\R^N)$, $\phi \geq 0$, we define the Radon measure
$g(u, DT(u))$ by
\begin{eqnarray}\label{FUTab}
\langle g(u, DT(u)), \phi \rangle &: =& {\mathcal R}_{\phi g}(T_{a,b}(u))+
\displaystyle\int_{\{u \leq a\}} \phi(x)
\left( g(u(x), 0) - g(a, 0)\right) \, dx  \nonumber\\
&& \displaystyle  + \int_{\{u \geq b\}} \phi(x)
\left(g(u(x), 0) - g(b, 0) \right) \, dx.
\end{eqnarray}
If $\phi\in C_c(\R^N)$, we write $\phi = \phi^+ -
\phi^-$ with $\phi^+= \max(\phi,0)$, $\phi^- = - \min(\phi,0)$,
and we define $\langle g(u, DT(u)), \phi \rangle : =
\langle g(u, DT(u)), \phi^+ \rangle- \langle g(u, DT(u)), \phi^- \rangle$.

Recall that, if $g(z,\zeta)$ is continuous in $(z,\zeta)$, convex
in $\zeta$ for any $z\in \R$, and $\phi \in C^1(\R^N)^+$ has compact
support, then  $\langle g(u, DT(u)), \phi \rangle$ is lower
semi-continuous in $TBV^+(\R^N)$ with respect to
$L^1(\R^N)$-convergence \cite{DCFV}. This property is used to prove existence of
solutions of (\ref{DirichletproblemP}).

We can now define the required functional calculus (see
\cite{ACMRelatE,ACMRelat,CER2}).

Let us denote by ${\mathcal P}$ the set of Lipschitz continuous
functions $p : [0, +\infty[ \rightarrow \R$ satisfying
$p^{\prime}(s) = 0$ for $s$ large enough. We write ${\mathcal
P}^+:= \{ p \in {\mathcal P} \ : \ p \geq 0 \}$.

Let $S \in  \mathcal{P}^+$, $T \in \mathcal{T}^+$.
We assume that
$u \in TBV_{\rm r}^+(\R^N) \cap L^\infty(\R^N)$ and note that
$$
\1_{\{u\leq a\}} S(u)\left(f(u(x), 0) - f(a, 0)\right), \1_{\{u \geq b\}} S(u)\left(f(u(x),
0) - f(b,0) \right) \in L^1(\R^N).             
$$
Since $h(z, 0) = 0$, the last assumption clearly holds also for
$h$. We define by $f_S(u,DT(u))$, $h_S(u,DT(u))$ as the Radon
measures given by (\ref{FUTab}) with $f_{S}(z,\zeta) = S(z)
f(z,\zeta)$. and  $h_{S}(z,\zeta) = S(z) h(z,\zeta)$,
respectively.

\subsection{The  notion of of entropy solution}\label{sect:defESpp}

Let $L^1_{w}(0,T,BV(\R^N))$  be the space of weakly$^*$
measurable functions $w:[0,T] \to BV(\R^N)$ (i.e., $t \in [0,T]
\to \langle w(t),\phi \rangle$ is measurable for every $\phi$ in the predual
of $BV(\R^N)$) such that $\int_0^T \Vert w(t)\Vert_{BV} \, dt< \infty$.
Observe that, since $BV(\R^N)$ has a separable predual (see
\cite{Ambrosio}), it follows easily that the map $t \in [0,T]\to
\Vert w(t) \Vert_{BV}$ is measurable. By  $L^1_{loc, w}(0, T,
BV(\R^N))$ we denote the space of weakly$^*$ measurable functions
$w:[0,T] \to BV(\R^N)$ such that the map $t \in [0,T]\to \Vert
w(t) \Vert_{BV}$ is in $L^1_{loc}(]0, T[)$.

\begin{definition} \label{def:espb}
Assume that $u_0 \in (L^1(\R^N)\cap L^\infty(\R^N))^+$. A
measurable function $u: ]0,T[\times \R^N \rightarrow \R$ is an
{\it entropy  solution}   of (\ref{DirichletproblemP}) in $Q_T =
]0,T[\times \R^N$ if $u \in C([0, T]; L^1(\R^N))$,
$T_{a,b}(u(\cdot)) - a \in L^1_{loc, w}(0, T, BV(\R^N))$ for all
$0 < a < b$, and
\begin{itemize}
\item[(i)]  $u(0) = u_0$, and \item[(ii)] \ the following
inequality is satisfied
\begin{eqnarray}\label{pei}
&& \hspace{-0.6cm}\displaystyle \int_0^T\int_{\R^N} \phi
h_{S}(u,DT(u)) \, dt + \int_0^T\int_{\R^N} \phi h_{T}(u,DS(u)) \, dt \nonumber
 \\ &&\hspace{-0.2cm}\leq  \displaystyle\int_0^T\int_{\R^N} \Big\{ J_{TS}(u(t)) \phi^{\prime}(t) - \b(u(t), \nabla u(t)) \cdot \nabla \phi \
T(u(t)) S(u(t))\Big\} dxdt,  \nonumber
\end{eqnarray}
 for truncation functions $S,  T \in \mathcal{T}^+$, and any  smooth function $\phi$ of
 compact support, in particular  those  of the form $\phi(t,x) =
 \phi_1(t)\rho(x)$, $\phi_1\in {\mathcal D}(]0,T[)$, $\rho \in
 {\mathcal D}(\R^N)$, where $J_q(r)$ denotes the primitive of $q$ for any function $q$; i.e. $\displaystyle J_q(r):=\int_0^r q(s)\,ds$
\end{itemize}
\end{definition}


\vskip 0.4 true cm \noindent {\bf Acknowledgements.} JAC
acknowledges partial support by MICINN project, reference MICINN
MTM2011-27739-C04-02, by GRC 2009 SGR 345 by the Generalitat de
Catalunya, and by the Engineering and Physical Sciences Research
Council grant number EP/K008404/1. JAC also acknowledges support
from the Royal Society through a Wolfson Research Merit Award. VC
acknowledges partial support by MICINN project, reference
MTM2009-08171, by GRC reference 2009 SGR 773 and by ''ICREA
Acad\`emia'' prize for excellence in research funded both by the
Generalitat de Catalunya. S. Moll acknowledges partial support by
MICINN project, reference MTM2012-31103.


\end{document}